\documentclass[reqno,11pt]{amsart}

\usepackage{tikz}
\usetikzlibrary{shapes, arrows.meta, positioning, fit}
\usepackage{amsthm,amsmath,amssymb}
\usepackage{mathrsfs,amsfonts,dsfont,functan,extarrows,mathtools}

\usepackage{hyperref}
\hypersetup{colorlinks=true,linkcolor=black, urlcolor=blue, citecolor=blue}

\usepackage[margin=2.8cm, a4paper]{geometry}

\usepackage{ulem}
\usepackage{marginnote}
\usepackage{xcolor}

\newtheorem{theorem}{Theorem}[section]

\newtheorem{definition}[theorem]{Definition}
\newtheorem{corollary}[theorem]{Corollary}

\newtheorem{lemma}[theorem]{Lemma}

\numberwithin{equation}{section}
\allowdisplaybreaks
\def\d{\mathrm{d}}
\def\div{{\rm div}}
\arraycolsep=1.5pt

\newcommand{\la}{\langle}
\newcommand{\ra}{\rangle}
\newcommand{\na}{\nabla}

\newcommand{\fab}{^{\frac{1}{2}}}

\newcommand{\no}{\nonumber}

\begin{document}

\title[Convergence from 2-NSM to NSM-Ohm]{On the convergence to the Navier-Stokes-Maxwell system with solenoidal Ohm's law}
\author[Z. Guo]{Zihua Guo}
\address[Zihua Guo]{School of Mathematics, Monash University, Clayton VIC 3800, Australia}
\email{zihua.guo@monash.edu}
\author[Z. Zhang]{Zeng Zhang}
\address[Zeng Zhang]{
School of Mathematics and Statistics, Wuhan University of Technology, Wuhan, 430070, P. R. China}
\email{zhangzeng534534@whut.edu.cn}

\date{}

\begin{abstract}
The incompressible Navier-Stokes-Maxwell system with solenoidal Ohm’s law can be viewed as  as the asymptotic limit of the two-fluid incompressible Navier-Stokes-Maxwell system as the momentum transfer coefficient tends to zero (see \cite{AIM2015}, Ars\'{e}nio, Ibrahim and Masmoudi, Arch. Ration. Mech. Anal., 2015). We prove this limit rigorously without loss of regularity by using the idea of frequency envelope. 

\vspace*{5pt}
\noindent{\it Keywords}: Navier-Stokes equations; Maxwell equations; Global well-posedness; Asymptotic limit

\noindent{\it 2010 Mathematics Subject Classification}: 35Q30, 35Q61, 35A01,
\end{abstract}

\maketitle

\section{Introduction} 
\label{sec:intro}
\subsection{The models}
We consider  the following two fluid incompressible Navier-Stokes-Maxwell system:
\begin{align*}\label{NSM}\tag{NSM}
	  \left\{
	\begin{array}{ll}
	\partial_tu^++u^+\cdot \nabla u^+-\mu\Delta u^+ +\frac{1}{2\sigma \varepsilon^2}(u^+-u^-)=-\nabla p^++\frac{1}{\varepsilon}(cE+u^+\times B), \\[1ex]
	\partial_tu^-+u^-\cdot \nabla u^--\mu\Delta u^- -\frac{1}{2\sigma \varepsilon^2}(u^+-u^-)=-\nabla p^--\frac{1}{\varepsilon}(cE+u^-\times B), \\ [1ex]
	\frac{1}{c}\partial_tE-\nabla \times B=-\frac{1}{2 \varepsilon}(u^+-u^-),\\[1ex]
	\frac{1}{c}\partial_tB+\nabla \times E=0,\\[1ex]
	\div u^+=0,\ \div u^-=0,\ \div E=0,\ \div B=0,
	\end{array}
	\right.
	\end{align*}
with the initial data
	\begin{align*}
	  &u^+(t,x)|_{t=0}=u^{+,in}(x),\  u^-(t,x)|_{t=0}=u^{-,in}(x),\\
	  &E(t,x)|_{t=0}=E^{in}(x),\ B(t,x)|_{t=0}=B^{in}(x).
	\end{align*}
This system models the motion of a  plasma of oppositely charged ions.   Here, the vector fields $u^+, u^-:\mathbb{R}^+_t\times \mathbb{R}^d_x\rightarrow \mathbb{R}^3$ represent the velocities of the cations and anions, respectively, with $d=2$ or $3$. The scalar functions $p^+$ and $p^-:\mathbb{R}^+_t\times \mathbb{R}^d_x\rightarrow \mathbb{R}$
stand for the pressure. The third equation is the Amp\`{e}re-Maxwell equation  for  the electric field $E:\mathbb{R}^+_t\times \mathbb{R}^d_x\rightarrow \mathbb{R}^3$, and the fourth equation is the Faraday’s law for the magnetic field $B:\mathbb{R}^+_t\times \mathbb{R}^d_x\rightarrow \mathbb{R}^3$.  The constant $\mu>0$ represents the viscosity,  $\sigma > 0$ denotes the
electrical conductivity of the fluid,  $c>0$ is the speed of light, and $\varepsilon>0$ is the momentum transfer coefficient. For the physical background of \eqref{NSM}, we refer to 
\cite{AIM2015,B1933,
D2001,GY1984}. We also mention that a rigorous derivation of  \eqref{NSM}  as the hydrodynamic limit from kinetic theory is established in \cite{AS2019}.

Multiplying  \eqref{NSM} by $(u^+,u^-,E,B)$, assuming they decay
 at infinity, then integrating in space and  using the divergence-free conditions,  one can obtain  the
following  energy conservation law:
\begin{align*}
  \tfrac{1}{2}\tfrac{\d}{\d t}(\tfrac{1}{2}\|u^+\|_{L^2}^2+&\tfrac{1}{2}\|u^-\|_{L^2}^2+\|E\|_{L^2}^2+\|B\|_{L^2}^2)\\&+
  \tfrac{\mu}{2}(\|\nabla u^+\|_{L^2}^2+\|\nabla u^-\|_{L^2}^2)+\tfrac{1}{\sigma}\|\tfrac{u^+-u^-}{2 \varepsilon}\|_{L^2}^2=0.
\end{align*}
Following the method of Leray \cite{L1934}, Giga, Ibrahim, Shen and Yoneda \cite{GISY2014}  established the  existence of global weak solutions
for any initial data $(u^{+,in},u^{-,in},E^{in},B^{in})\in (L^2)^4$ in both 2D and 3D. Using the fixed point argument,  the authors \cite{GISY2014} also obtained local mild solutions for large data  and global solutions for small data for $(u^{+,in},u^{-,in},E^{in},B^{in})\in H^{\frac{1}{2}}(\mathbb{R}^3)\times H^{\frac{1}{2}}(\mathbb{R}^3)\times L^2(\mathbb{R}^3)\times L^2(\mathbb{R}^3)$.

As $c$ tends to infinity and $\varepsilon$ tends to zero, there are three distinct asymptotic regimes:
\begin{itemize}
    \item fixing $\varepsilon$ and letting $c$ tends to infinity, then  letting $\varepsilon$ tends to zero;
    
      \item fixing $c$ and letting $\varepsilon$ tends to zero, then  letting $c$ tends to infinity;
    
    \item letting $c$ tends to infinity and  $\varepsilon$ tends to zero simultaneously. 
  
\end{itemize}
As mentioned in \cite{AIM2015}, the following relationship diagram (Figure 1) can be obtained formally.

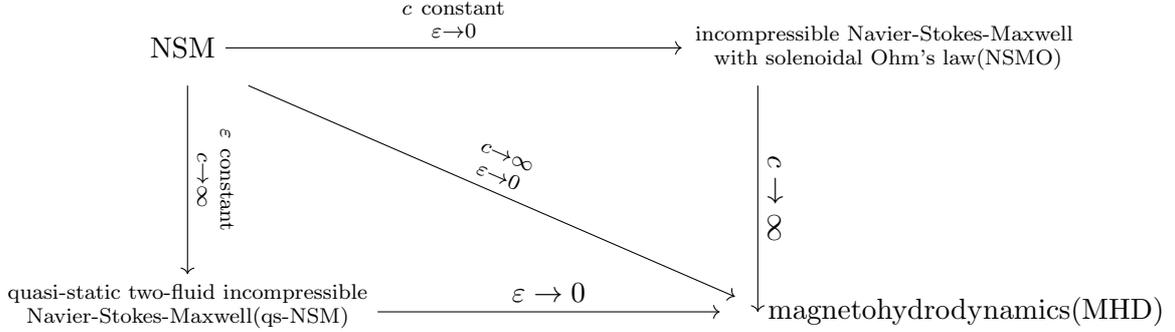
\begin{figure}[h]
\centering
\begin{tikzpicture}
        \draw[->] (0,0) node[left]{NSM}-- node [pos=0.5,above,sloped]{${c ~~\text{constant}}\atop{\varepsilon \rightarrow 0}$}(6,0) node[right] { 
         ${\text{incompressible Navier-Stokes-Maxwell }}
         \atop 
         {\text{with solenoidal Ohm's law(NSMO)}}$
         };;
         
        \draw[->] (7,-0.5) -- node [above,sloped]{$c\rightarrow \infty$}(7,-3.5) node[right]{magnetohydrodynamics(MHD)};

        \draw[->] (-0.5,-0.5)-- node[above,sloped,]
         {$
         {\varepsilon ~~\text{constant}}
         \atop
         {c \rightarrow \infty}
         $}
         (-0.5,-3) node[below]
         { 
         ${\text{quasi-static two-fluid incompressible}}
         \atop 
         {\text{Navier-Stokes-Maxwell(qs-NSM)}}$
         };

         \draw[->] (2,-3.5)-- node[above,sloped,]
         {$
         \varepsilon\rightarrow 0
         $}
         (6.5,-3.5) ;

           \draw[->] (0.3,-0.5)-- node[above,sloped,]
         {${c \rightarrow \infty}\atop{\varepsilon \rightarrow 0}$}
         (6.7,-3.3) ;
        
    \end{tikzpicture}
\caption{asymptotic regimes}
\end{figure}
To keep the text more coherent and concise, we focus only on the system (NSMO). For further details on the systems (qs-NSM) and (MHD), we refer to \cite{AIM2015}.
First, denoting the bulk velocity $u=\frac{1}{2 }(u^++u^-)$ and the electrical current $j=\frac{1}{2 \varepsilon}(u^+-u^-)$,  we rewrite \eqref{NSM} in the following equivalent form:
	\begin{align}\label{euqNSM}\tag{eq-NSM}
	  \left\{
		\begin{array}{l}
			\partial_tu+u\cdot \nabla u+\varepsilon^2j\cdot \nabla j-\mu\Delta u=-\nabla p+j\times B,
		\\[1ex]
			\varepsilon^2\partial_tj+\varepsilon^2(u\cdot \nabla j+j\cdot \nabla u)-\varepsilon^2\mu\Delta j+\frac{1}{\sigma}j=-\nabla \tilde{p}+cE+u\times B,
		\\[1ex]
			\frac{1}{c}\partial_tE-\nabla \times B=-j,
		\\[1ex]
			\frac{1}{c}\partial_tB+\nabla \times E=0,
		\\[1ex]
			\div u=0,\ \div j=0,\ \div E=0,\ \div B=0,
		\end{array}
		\right.
	\end{align}
where the pressures $p=\tfrac{1}{2}(p^++p^-)$ and $\tilde{p}=\tfrac{\varepsilon}{2}(p^+-p^-)$.
Then letting $\varepsilon\rightarrow 0$, 
one can obtain formally the following incompressible Navier-Stokes-Maxwell system with solenoidal Ohm's law:
	\begin{align}\tag{NSMO}\label{NSMO}
		 \left\{
		\begin{array}{ll}
			\partial_tu+u\cdot \nabla u-\mu\Delta u=-\nabla p+j\times B,  \\[1ex]
			j=\sigma(-\nabla \tilde{p}+cE+u\times B),   \\[1ex]
			\frac{1}{c}\partial_tE-\nabla \times B=-j,  \\[1ex]
			\frac{1}{c}\partial_tB+\nabla \times E=0, \\[1ex]
			\div u=0,\ \div j=0,\ \div E=0,\ \div B=0.
		\end{array}
		\right.
	\end{align}
It can be verified that \eqref{NSMO} enjoys the conservation law:
 \begin{align*}
  \tfrac{1}{2}\tfrac{\d}{\d t}(\|u\|_{L^2}^2+\|E\|_{L^2}^2+\|B\|_{L^2}^2)+
  \mu\|\nabla u\|_{L^2}^2+\tfrac{1}{\sigma}\|j\|_{L^2}^2=0.
\end{align*}
However, proving the global existence of Leray-weak solutions to \eqref{NSMO} is challenging. This difficulty arises from the hyperbolic nature of the Maxwell equations, which makes it hard to establish the weak stability of the nonlinear term $j\times B$ in $L^2$-energy space. For analytical studies of \eqref{NSMO}, we refer  to \cite{GIM2014,M2010} and the references therein. We mention that the results in \cite{GIM2014,M2010} pertain to another system, which is mathematically very much similar
to \eqref{NSMO} and is also called an incompressible Navier–Stokes–Maxwell
system with solenoidal Ohm’s law in some literature.  The method developed
in \cite{GIM2014,M2010} can be directly applied to analyze and solve \eqref{NSMO}. Using the energy inequality and a logarithmic estimate to bound the $L^\infty_x$-norm of the velocity field,  Masmoudi \cite{M2010} established the existence and uniqueness of global solution for $(u^{in},E^{in},B^{in})\in L^2(\mathbb{R}^2)\times (H^{s}(\mathbb{R}^2))^2$ with $0<s<1$.  Greman, Ibrahim and Masmoudi \cite{GIM2014} obtained the local existence of mild solutions for large data and global existence for small data, for $(u^{in},E^{in},B^{in})\in L^2\times (L^2_{log})^2$ in 2D and $(u^{in},E^{in},B^{in})\in (\dot{H}^{\frac{1}{2}})^{3}$ in 3D.  Their proof relies on an $L^2_tL^\infty_x$ estimates for the velocity field. For classical solutions,  Jiang and Luo \cite{JL2018}  employed  the
decay and dissipative properties of the electromagnetic field and established the global in time  solution for small initial data $u^{in}\in H^s(\mathbb{R}^3),\ E^{in},\ B^{in}\in H^{s+1}(\mathbb{R}^3)$ with $s\geq 2$.

\subsection{Rigorous justifications}
The limits 
\begin{align*}
\eqref{NSM}\xrightarrow[]{\substack
{\varepsilon ~~\text{constant}}
\atop
{c\rightarrow \infty} 
}\text{(qs-NSM)}
\xrightarrow []{c\rightarrow\infty} \text{(MHD)}
\end{align*}
has been rigorously justified  in \cite{AIM2015} within the framework of Leray-weak solutions, using the natural uniform energy bounds and the classical Aubin-Lions-Simon compactness theorem (see, for instance, \cite{s1987}).  However, the limit
\begin{align*}
\eqref{NSM}\xrightarrow[]{\substack
{c ~~\text{constant}}
\atop
{\varepsilon\rightarrow 0} 
}\text{(NSMO)}
\end{align*} 
in the framework of Leray weak solution remains an open problem, as well as the the existence of  global weak solutions to NSMO. The primary difficulty lies in handling the nonlinear term $j^\varepsilon\times B^\varepsilon$ when passing to the limit. This highlights the  singularity of this regime compared to the first one.  As for the limit
\begin{align*}
\eqref{NSM}\xrightarrow[]{\substack
{c \rightarrow \infty}
\atop
{\varepsilon\rightarrow 0} 
}\text{(MHD)}, 
\end{align*} 
under the assumption that $\varepsilon$ is not too small relative to $c$, Ars{\'e}nio, Ibrahim and Masmoudi
 \cite{AIM2015} established a rigorous convergence result for weak solutions. This assumption aims at staying away from the singular system NSMO. 
The approach is based on   the spectral analysis of Maxwell's operator, which play a key role  in establishing weak stability of nonlinear terms. We mention that using this spectral analysis, the  convergence
$
{\text{(NSMO)}}\xrightarrow[]{
c \rightarrow \infty
}\text{(MHD)}
$
is also rigorously justified in \cite{AIM2015}.

In this paper, we focus on the convergence of \eqref{NSM} to (NMSO). By imposing more regularity on the initial data, it becomes feasible to establish  a rigorous asymptotic result. The second author of the current paper \cite{Z2021} proved that \eqref{NSM} admits a unique global solution for small initial data
\begin{align*}
u^{+,\varepsilon,in},u^{-,\varepsilon,in},E^{\varepsilon,in},B^{\varepsilon,in}
\in H^s(\mathbb{R}^3)\,\,
\text{with}\,\,  s\geq 3.
\end{align*}
Furthermore, if the initial data $$
(\tfrac{u^{+,\varepsilon,in}+u^{-,\varepsilon,in}}{2},u^{+,\varepsilon,in}-u^{-,\varepsilon,in},
E^{\varepsilon,in},B^{\varepsilon,in})$$ converges strongly to $(u^{in}, 0,
E^{in},B^{in})$ in a suitable  sense as $\varepsilon\rightarrow0$, then
$(\tfrac{u^{+,\varepsilon}+u^{-,\varepsilon}}{2},
E^{\varepsilon},B^{\varepsilon})$ is a Cauchy sequence in  $(L^\infty(\mathbb{R}^+;H^{s-2}(\mathbb{R}^3)))^3$, and
$
\tfrac{u^{+,\varepsilon}-u^{-,\varepsilon}}{2\varepsilon}$ is a Cauchy sequence in  $L^2(\mathbb{R}^+;H^{s-2}(\mathbb{R}^3))$. The limit functions $(u, E, B, j)$ then satisfy \eqref{NSMO} with the initial data $(u^{in}, E^{in}, B^{in}).$

It is worth mentioning that in the results of \cite{Z2021}, the initial data belong to $H^s$, while the solution converges in $H^{s-2}$. Clearly, the regularity of the latter is lower than that of the former. We aim to improve these results by demonstrating that the
regularity of the space in which the solutions converge can be the same as that of the initial data.  
\subsection{Statement of the results}Now we state our main results as follows:
\begin{theorem}[Global well-posedness for \eqref{NSM}] \label{thm:GNSM}
  Let $s> \tfrac{3}{2}$ and $s-1\leq s'\leq s+1$. Assume the initial data $u^{+,\varepsilon,in}, u^{-,\varepsilon,in},  E^{\varepsilon,in}, B^{\varepsilon,in}$ are all divergence free and satisfy
  \begin{align*}
  u^{+,\varepsilon,in}+u^{+,\varepsilon,in}\in H^{s'}(\mathbb{R}^3),\,
  u^{+,\varepsilon,in}-u^{-,\varepsilon,in}, E^{\varepsilon,in}, B^{\varepsilon,in}\in H^{s}(\mathbb{R}^3).
  \end{align*}
 There exists a small positive constant $\kappa_0$, independent of $\varepsilon$,  such that if
	  \begin{align*}	  \|u^{+,\varepsilon,in}+u^{-,\varepsilon,in}\|_{H^{s'}(\mathbb{R}^3)}^2 + \| u^{+,\varepsilon,in}-u^{-,\varepsilon,in}\|_{H^s(\mathbb{R}^3)}^2
		  + \|E^{\varepsilon,in}\|_{H^{s}(\mathbb{R}^3)}^2
		  + \|B^{\varepsilon,in}\|_{H^{s}(\mathbb{R}^3)}^2
			\leq \kappa_0,
	  \end{align*}
then the system \eqref{NSM} admits a unique global in time  solution $(u^{+,\varepsilon}, u^{-,\varepsilon},$ $E^{\varepsilon}, B^{\varepsilon})$ satisfying
	  \begin{align*}
 &u^{+,\varepsilon}+u^{-,\varepsilon} \in C(\mathbb{R}^+,{H}^{s'}(\mathbb{R}^3)),\,\,
 \nabla(u^{+,\varepsilon}+u^{-,\varepsilon})\in L^2(\mathbb{R}^+;{H}^{s'}(\mathbb{R}^3)),\\
	  & u^{+,\varepsilon}-u^{-,\varepsilon} \in C(\mathbb{R}^+,{H}^{s}(\mathbb{R}^3)),\,\,
 \nabla(u^{+,\varepsilon}-u^{-,\varepsilon})\in L^2(\mathbb{R}^+;{H}^{s}(\mathbb{R}^3)),\\
	  & E^\varepsilon,\,\, B^\varepsilon\in C(\mathbb{R}^+;H^s(\mathbb{R}^3)).
	  \end{align*}

\end{theorem}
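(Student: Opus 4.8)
The plan is to work with the equivalent formulation \eqref{euqNSM} in the variables $(u,j,E,B)$, where the singular factors of $\varepsilon^{-1}$ have been absorbed into harmless $\varepsilon^2$-weights, and to close a single uniform-in-$\varepsilon$ energy estimate. After applying the Leray projection to remove the pressures $p,\tilde p$, I would measure $u$ in $H^{s'}$ and $(j,E,B)$ in $H^{s}$ and introduce
\[
\mathcal{E}=\|u\|_{H^{s'}}^2+\varepsilon^2\|j\|_{H^{s}}^2+\|E\|_{H^{s}}^2+\|B\|_{H^{s}}^2,\qquad
\mathcal{D}=\mu\|\na u\|_{H^{s'}}^2+\varepsilon^2\mu\|\na j\|_{H^{s}}^2+\tfrac1\sigma\|j\|_{H^{s}}^2.
\]
The weights mirror the conservation law: the $\varepsilon^2$ in front of $\|j\|_{H^s}^2$ is dictated by the $\varepsilon^2\partial_tj$ term, while the zeroth-order damping $\tfrac1\sigma\|j\|_{H^s}^2$ is the only dissipative quantity that is simultaneously $\varepsilon$-independent and controls all frequencies (there is no Poincar\'e inequality on $\mathbb R^3$); it will carry the $L^2_t$-integrability in every nonlinear estimate. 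The decisive algebraic point is that the linear Maxwell coupling is skew: the term $cE$ on the right of the current equation, tested against $\Lambda^{2s}j$, exactly cancels the term $-j$ in Amp\`ere's law tested against a suitably $c$-normalised $\Lambda^{2s}E$, and the two curl terms cancel after integration by parts. Consequently $E$ and $B$ need no dissipation of their own; they ride along in $L^\infty_tH^{s}$ and feed the estimate only through the quadratic Lorentz terms.

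Next I would decompose in Littlewood--Paley blocks and run the estimate frequency-by-frequency against a frequency envelope $\{\alpha_k\}$ majorising the combined datum (with weight $2^{s'k}$ on $P_ku^{in}$ and $2^{sk}$ on $P_k(\varepsilon j^{in},E^{in},B^{in})$), chosen slowly varying and with $\|\alpha\|_{\ell^2}^2\lesssim\mathcal{E}(0)$. Propagating the per-block bound $\lesssim\alpha_k$ and summing in $\ell^2_k$ is what I would use to obtain persistence at \emph{exactly} the regularity of the data, i.e.\ without the derivative loss present in \cite{Z2021}. Modulo this localisation, the heart is the differential inequality $\tfrac12\tfrac{\d}{\d t}\mathcal{E}+\mathcal{D}\lesssim\mathcal{E}^{1/2}\mathcal{D}$: the Navier--Stokes term $u\cdot\na u$ is handled in the usual way using $\|u\|_{L^2_tL^\infty_x}\lesssim\|u\|_{L^2_t(\dot H^1\cap\dot H^{s'+1})}$ (valid since $s'>\tfrac12$); the self-interactions $\varepsilon^2 j\cdot\na j$ and $\varepsilon^2(u\cdot\na j+j\cdot\na u)$ are put in divergence form $\div(j\otimes j)$ to save one derivative and then absorbed through the $\varepsilon$-weighted norms, every factor $\varepsilon^2$ being split so that the resulting constant is $\varepsilon$-independent; and the Lorentz couplings $j\times B$, $u\times B$ are controlled by placing the $L^2_t$-integrability on the $j$-damping and on the parabolic norm of $u$, while $B$ is kept in $L^\infty_tH^s$ and $H^s\hookrightarrow L^\infty$ ($s>\tfrac32$) supplies the $L^\infty_x$ factors. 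It is precisely here that $s-1\le s'\le s+1$ enters: it lets one interpolate $u$ (resp.\ $j$) between the available homogeneous dissipation exponents so that the derivative counts in $\la j\times B,u\ra_{H^{s'}}$ and $\la u\times B,j\ra_{H^{s}}$ balance.

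Given the cubic inequality, a continuity/bootstrap argument using $\mathcal{E}(0)\le\kappa_0$ yields the uniform bound $\sup_{t\ge0}\mathcal{E}(t)+\int_0^\infty\mathcal{D}\,\d t\lesssim\kappa_0$, with all constants independent of $\varepsilon$. For fixed $\varepsilon>0$ the system \eqref{euqNSM} is a genuine parabolic--hyperbolic system with smooth coefficients, so local existence of a unique solution at this regularity is standard (via mollification or a contraction argument); the uniform a priori bound together with a continuation criterion then promotes it to a global solution, and the frequency-envelope control upgrades $L^\infty_t$ to $C_t$ in the stated spaces. Uniqueness follows from a difference estimate carried out one derivative lower, where the same damping/parabolic structure closes a Gr\"onwall inequality. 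Translating back via $u^{\pm}=u\pm\varepsilon j$ gives the conclusions for $u^{+,\varepsilon}\pm u^{-,\varepsilon}$, $E^\varepsilon$, $B^\varepsilon$.

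The main obstacle is the uniform-in-$\varepsilon$ treatment of the Lorentz terms $j\times B$ and $u\times B$. Since $E$ and $B$ carry no dissipation, the only non-parabolic source of $L^2_t$-integrability is the zeroth-order current damping, and every such term must be arranged so that (i) its time integral is absorbed by $\tfrac1\sigma\|j\|_{L^2_tH^s}^2$ and the parabolic norm of $u$, and (ii) the accompanying powers of $\varepsilon$ recombine into a nonnegative power, hence a factor $\le1$. Achieving both at once, at the exact regularity and without an endpoint logarithmic loss, is what forces the frequency-envelope bookkeeping and the sharp range $s-1\le s'\le s+1$; I expect the two extremes $s'=s+1$ and $s'=s-1$ to be the delicate cases.
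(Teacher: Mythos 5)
Your proposal is correct and follows essentially the same route as the paper: reformulate \eqref{NSM} as \eqref{euqNSM}, prove the differential inequality $\tfrac12\tfrac{\d}{\d t}\mathcal{E}+\mathcal{D}\lesssim \mathcal{E}^{1/2}\mathcal{D}$ using the skew Maxwell cancellations and the product estimates of Lemma \ref{jb}, close a continuity/bootstrap argument under the smallness $\kappa_0$, and use frequency-envelope control of the high-frequency tails to get persistence at the exact regularity of the data and continuity in time. The only differences are cosmetic: you phrase the envelope as per-block bounds $\lesssim\alpha_k$ in the Tao--Koch--Tzvetkov style while the paper encodes the same information in the weighted norms $H^{s}(\omega)$ with $\omega\in AF(\tfrac12)$ (Lemma \ref{frequency}, Lemma \ref{jbjb}), and you invoke a standard contraction/continuation construction where the paper runs the Friedrichs approximation with Aubin--Lions compactness.
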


\begin{theorem}[Strong convergence from  \eqref{NSM} to \eqref{NSMO}] \label{thm:Global-converg}
   Let $s> \tfrac{3}{2}$ and $s-1\leq s'\leq s+1$.
 Assume that for all $\varepsilon>0$,
    \begin{align*}	  \|u^{+,\varepsilon,in}+u^{-,\varepsilon,in}\|_{H^{s'}(\mathbb{R}^3)}^2 + \| u^{+,\varepsilon,in}-u^{-,\varepsilon,in}\|_{H^s(\mathbb{R}^3)}^2
		  + \|E^{\varepsilon,in}\|_{H^{s}(\mathbb{R}^3)}^2
		  + \|B^{\varepsilon,in}\|_{H^{s}(\mathbb{R}^3)}^2
			\leq \kappa_0
	  \end{align*}
with $\kappa_0$ a small positive constant. We further assume that
\begin{align*}
 (\tfrac{u^{+,\varepsilon,in} + u^{-.\varepsilon,in}}{2},u^{+,\varepsilon,in} - u^{-,\varepsilon,in}, E^{\varepsilon,in},
  B^{\varepsilon,in}) \rightarrow (u^{in},0, E^{in}, B^{in}) \,\,\, \textit{in}\,\,\, H^{s'}(\mathbb{R}^3)\times (H^{s}(\mathbb{R}^3))^3,
  \end{align*}
as  $\varepsilon\rightarrow 0.$ Let $\{(u^{+,\varepsilon}, u^{-,\varepsilon}, E^{\varepsilon}, B^{\varepsilon})\}$ be the family of global solutions to  \eqref{NSM} constructed in Theorem \ref{thm:GNSM}. Then there are functions
$(u(t,x),j(t,x),E(t,x),B(t,x))$ satisfying, as  $\varepsilon\rightarrow 0,$
\begin{align*}
		 & (\tfrac{u^{+,\varepsilon} + u^{-,\varepsilon}}{2}, E^\varepsilon,B^\varepsilon)  \rightarrow (u,E,B)\,\,\text{in}\,\,
 L^\infty(\mathbb{R}^+;H^{s'}(\mathbb{R}^3))\times (L^\infty(\mathbb{R}^+;H^{s}(\mathbb{R}^3))^2,\\
		 & ( \tfrac{\na(u^{+,\varepsilon} + u^{-,\varepsilon})}{2}, \tfrac{u^{+,\varepsilon} - u^{-,\varepsilon}}{2\varepsilon})  \rightarrow (\na u, j)\,\,\text{in}\,\,
 L^2(\mathbb{R}^+;H^{s'}(\mathbb{R}^3))\times L^2(\mathbb{R}^+;H^{s}(\mathbb{R}^3)).
	  \end{align*}
Moreover, the limit functions $(u,j,E,B)$ subject to the system \eqref{NSMO} with the initial data
$$u|_{t=0}=u^{in},E|_{t=0}=E^{in},B|_{t=0}=B^{in}.$$

\end{theorem}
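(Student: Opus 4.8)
The plan is to combine uniform-in-$\varepsilon$ a priori bounds with a difference (Cauchy-type) estimate carried out at a reduced regularity level, and then to use a frequency-envelope argument to upgrade the convergence to the sharp regularity of the data, thereby avoiding any loss of derivatives. From Theorem \ref{thm:GNSM} (and its proof) I first record the uniform bounds: writing $u^\varepsilon=\tfrac12(u^{+,\varepsilon}+u^{-,\varepsilon})$ and $j^\varepsilon=\tfrac1{2\varepsilon}(u^{+,\varepsilon}-u^{-,\varepsilon})$, one has, uniformly in $\varepsilon$, $\|u^\varepsilon\|_{L^\infty_tH^{s'}}+\|\nabla u^\varepsilon\|_{L^2_tH^{s'}}\lesssim\sqrt{\kappa_0}$, $\|\varepsilon j^\varepsilon\|_{L^\infty_tH^{s}}+\|j^\varepsilon\|_{L^2_tH^{s}}\lesssim\sqrt{\kappa_0}$, and $\|(E^\varepsilon,B^\varepsilon)\|_{L^\infty_tH^s}\lesssim\sqrt{\kappa_0}$. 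Crucially, the same scheme yields a uniform frequency-envelope bound, i.e. a fixed $\ell^2$ sequence $\{c_k\}$ (built from the data, whose high-frequency tail is uniformly small because the data converge in $H^{s'}\times(H^s)^3$) dominating $\|P_k(u^\varepsilon,\varepsilon j^\varepsilon,E^\varepsilon,B^\varepsilon)\|_{L^\infty_tH^\bullet}$ for every $k$ and every $\varepsilon$.

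Next I identify the limit. Substituting the solenoidal Ohm law $j=\sigma\mathbb{P}(cE+u\times B)$ (with $\mathbb{P}$ the Leray projection enforcing $\div j=0$) closes \eqref{NSMO} into a Navier--Stokes--Maxwell-type system for $(u,E,B)$, which is globally solvable for data of size $\kappa_0$ by the very same energy/frequency-envelope scheme used for Theorem \ref{thm:GNSM}; call its solution $(u,j,E,B)$ with data $(u^{in},E^{in},B^{in})$. I then estimate the differences $\delta u=u^\varepsilon-u$, $\delta j=j^\varepsilon-j$, $\delta E=E^\varepsilon-E$, $\delta B=B^\varepsilon-B$ in a norm $H^\theta$ with $\theta$ one derivative below $s'$ (resp. $s$), where a derivative loss is affordable. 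The decisive point is the singular term $\varepsilon^2\partial_t j^\varepsilon$ in the $j$-equation: writing $\varepsilon^2\partial_t j^\varepsilon=\varepsilon^2\partial_t\delta j+\varepsilon^2\partial_t j$ and pairing with $\delta j$, the first piece produces the benign $\tfrac{\varepsilon^2}{2}\tfrac{\d}{\d t}\|\delta j\|_{H^\theta}^2$, whose only boundary contribution $\tfrac{\varepsilon^2}2\|\delta j(0)\|_{H^\theta}^2\lesssim\|u^{+,\varepsilon,in}-u^{-,\varepsilon,in}\|_{H^\theta}^2+\varepsilon^2\|j(0)\|_{H^\theta}^2$ tends to $0$ (here the hypothesis $u^{+,\varepsilon,in}-u^{-,\varepsilon,in}\to0$ in $H^s$ is essential), while $\varepsilon^2\partial_t j$ and the remaining $\varepsilon^2(u^\varepsilon\cdot\nabla j^\varepsilon+j^\varepsilon\cdot\nabla u^\varepsilon-\mu\Delta j^\varepsilon)$ terms are $O(\varepsilon)$ forcings by the uniform bounds, hence vanish.

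The energy estimate for the system of differences is then run along the conservation-law structure: the dissipative terms $\mu\|\nabla\delta u\|_{H^\theta}^2$ and $\tfrac1\sigma\|\delta j\|_{H^\theta}^2$ are produced by the viscosity and the $\tfrac1\sigma\delta j$ term, the Maxwell part $(\delta E,\delta B)$ is estimated by pairing with $c\delta E$ and $c\delta B$ so that the curl interactions cancel by skew-symmetry, and the cross terms $c\langle\delta E,\delta j\rangle$ coming from the Ohm relation and from Amp\`ere's law cancel precisely as in the displayed energy identity for \eqref{NSMO}. The Lorentz differences split as $j^\varepsilon\times B^\varepsilon-j\times B=\delta j\times B^\varepsilon+j\times\delta B$ and $u^\varepsilon\times B^\varepsilon-u\times B=\delta u\times B^\varepsilon+u\times\delta B$, whose $B^\varepsilon$-parts cancel against each other (again via $a\cdot(b\times c)=-b\cdot(a\times c)$) and whose remaining parts, together with the transport/commutator terms, are controlled by the $\kappa_0$-smallness and absorbed into the dissipation. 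A Gr\"onwall argument then yields $\|\delta u\|_{L^\infty_tH^\theta}^2+\|(\delta E,\delta B)\|_{L^\infty_tH^\theta}^2+\|\nabla\delta u\|_{L^2_tH^\theta}^2+\|\delta j\|_{L^2_tH^\theta}^2\lesssim o_\varepsilon(1)$, i.e. strong convergence at the reduced level $H^\theta$.

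Finally I upgrade to the sharp level. For any $\eta>0$ pick $N$ with $\sum_{k>N}c_k^2<\eta$; the uniform envelope bound gives $\sup_\varepsilon\|P_{>N}(u^\varepsilon,E^\varepsilon,B^\varepsilon)\|_{L^\infty_tH^{\bullet}}<\sqrt\eta$ and likewise for the limit, while on the fixed band $P_{\le N}$ the $H^\bullet$ and $H^\theta$ norms are equivalent, so the $H^\theta$-convergence from the previous step forces $\|P_{\le N}(u^\varepsilon-u,\dots)\|_{L^\infty_tH^\bullet}\to0$; combining, $(u^\varepsilon,E^\varepsilon,B^\varepsilon)\to(u,E,B)$ in $L^\infty_tH^{s'}\times(L^\infty_tH^s)^2$. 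The same splitting applied to the dissipative frequency-envelope bounds (in $L^2_tH^{\bullet+1}$ for $u$ and $L^2_tH^\bullet$ for $j$) upgrades the reduced-level $L^2_t$ convergence to $(\nabla u^\varepsilon,j^\varepsilon)\to(\nabla u,j)$ in $L^2_tH^{s'}\times L^2_tH^s$. Passing to the limit in \eqref{euqNSM} is then immediate since all $\varepsilon^2$-terms vanish and the convergences are strong, so $(u,j,E,B)$ solves \eqref{NSMO} with the stated data. I expect the main obstacle to be precisely the interplay in the difference estimate between the singular relaxation term $\varepsilon^2\partial_t j^\varepsilon$ and the hyperbolic Maxwell coupling $j^\varepsilon\times B^\varepsilon$: one must close the estimate uniformly in $\varepsilon$ at a level where derivatives are lost, and only the frequency-envelope equicontinuity then recovers the sharp regularity --- reconciling these two facts is the crux of the argument.
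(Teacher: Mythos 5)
Your overall architecture --- uniform smallness of high frequencies via a frequency-envelope/weighted-norm bound, a difference estimate at a level where one derivative can be sacrificed, and a low/high frequency splitting to recover the sharp regularity --- is indeed the skeleton of the paper's argument. However, there is a genuine gap at exactly the point you yourself flag as ``the crux'': your treatment of the singular term $\varepsilon^2\partial_t j$. You assert that $\varepsilon^2\partial_t j$ (and the companion terms) are ``$O(\varepsilon)$ forcings by the uniform bounds,'' but the uniform energy bounds give only $B^\varepsilon, B\in L^\infty(\mathbb{R}^+;H^s)$ and $E^\varepsilon, E\in L^\infty(\mathbb{R}^+;H^s)$, with \emph{no time integrability} of the electromagnetic field. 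Since $\partial_t j=\sigma\mathbb{P}\bigl(c^2(\nabla\times B-j)+\partial_t u\times B+u\times\partial_t B\bigr)$, the piece $c^2\nabla\times B$ is merely $L^\infty_t H^{s-1}$, so $\int_0^T\varepsilon^2|\langle \partial_t j,\delta j\rangle_{H^{\theta}}|\,\d t$ grows like $\sqrt{T}$ and cannot be made $o(1)$ uniformly in $T$; the theorem claims convergence in $L^\infty(\mathbb{R}^+;\cdot)$ and $L^2(\mathbb{R}^+;\cdot)$, i.e.\ globally in time, so this is fatal as written. The missing ingredient is precisely what the paper's Section 4 (Lemma \ref{hf}) supplies: substituting Ohm's law into Amp\`ere's equation produces a damping term $\sigma c E$, and taking a curl of Faraday's law produces a damped wave equation for $B$; these yield the time-global dissipative bounds $E\in L^2_t H^{s}_{\leq k_0}$, $\nabla B\in L^2_t H^{s-1}_{\leq k_0}$ and hence $\int_0^{\infty}\|\varepsilon^2\partial_t j^\varepsilon\|_{H^{s}_{\leq k_0}}^2\d t\lesssim \|\varepsilon j^{\varepsilon,in}\|_{H^s}^2+\varepsilon 2^{2k_0}$, which is what actually closes the difference estimate uniformly on $\mathbb{R}^+$. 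Your proposal never invokes this damping structure (for either the $\varepsilon$-system or the limit system), so the estimate does not close.

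A secondary, structural point: you identify the limit by first solving \eqref{NSMO} globally (``by the very same scheme'') and then estimating $u^\varepsilon-u$, etc. This is a legitimate alternative route, but it requires an independent global well-posedness theory for \eqref{NSMO} \emph{including} the dissipative bounds above for the limit fields (otherwise the $\varepsilon^2\partial_t j$ term again cannot be integrated in time), which is a nontrivial additional theorem. The paper sidesteps this entirely: it extracts an arbitrary sequence $\varepsilon_n\to0$, proves via Lemma \ref{low-diff} (low frequencies, with the $\varepsilon 2^{4k_0}$-type gains) plus the weighted-norm tail estimate \eqref{hign-diff} that $(u^{\varepsilon_n},\nabla u^{\varepsilon_n},j^{\varepsilon_n},E^{\varepsilon_n},B^{\varepsilon_n})$ is Cauchy in $L^\infty_t H^{s'}\times L^2_tH^{s'}\times L^2_tH^{s}\times(L^\infty_tH^{s})^2$, and only then identifies the limit as a solution of \eqref{NSMO} by passing to the limit in the equations. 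If you repair the main gap by proving the electromagnetic dissipation estimates, you should also either prove the NSMO well-posedness you rely on or switch to the Cauchy-sequence formulation.
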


  Compared to the previous work \cite{Z2021}, the regularity of the initial velocity and electromagnetic field has been reduced. This improvement is due to the use of estimates in Besov spaces, which are more refined than those in integer Sobolev spaces. We mention that the values of $s'$ and $s$ may not be optimal, but this is not the main focus of the paper. The main advancement in our work lies in eliminating the loss of regularity in the convergence space. Specifically,  the initial data belong to $H^{s'}\times (H^s)^3$, and the solutions still converge in $H^{s'}\times (H^s)^3$. Our strategy involves decomposing $(u^{\varepsilon}, j^{\varepsilon}, E^{\varepsilon}, B^{\varepsilon})$ (recall that $u^\varepsilon=\tfrac{1}{2 }(u^{+,\varepsilon}+u^{-,\varepsilon})$ and $j^\varepsilon=\tfrac{1}{2 \varepsilon}(u^{+,\varepsilon}-u^{-,\varepsilon})$) into low and high frequencies. \begin{itemize}
 \item After arbitrarily extracting a sequence $\{\varepsilon_n\}$ with $\lim\limits_{n\rightarrow \infty}\varepsilon_n=0$, and by introducing weighted Besov spaces (see Preliminary 2.1), we can show that there is a uniform small tail estimate for solutions $(u^{\varepsilon_n}, j^{\varepsilon_n}, E^{\varepsilon_n}, B^{\varepsilon_n})$ (see the estimate \eqref{hign-diff}). More precisely, we obtain that
    $\forall \eta>0$, there exists a $k_0$, such that for all $n$,
\begin{align*}
\|u^{\varepsilon_n}\|_{L^\infty(\mathbb{R}^+;H^{s'}_{k> k_0})}^2&+
\|\na u^{\varepsilon_n}\|_{L^2(\mathbb{R}^+;{H}^{s'}_{k >k_0})}^2\\&+
\|j^{\varepsilon_n}\|_{L^2(\mathbb{R}^+;{H}^{s}_{k >k_0})}^2
+\|E^{\varepsilon_n},B^{\varepsilon_n}\|_{L^\infty(\mathbb{R}^+;H^{s}_{k> k_0})}^2\leq \eta.
\end{align*}
Here $\|f\|_{H^{l}_{>k_0}}=\Big ( \sum\limits_{k>k_0} 2^{2k{l}} \|{\Delta}_k f\|^2_{L^2}   \Big)^{\frac{1}{2}}$ and $\|f\|_{H^{l}_{\leq k_0}}=\Big ( \sum\limits_{k\leq k_0} 2^{2k{l}} \|{\Delta}_k f\|^2_{L^2}   \Big)^{\frac{1}{2}}$.
\item For low frequency,  we have the following useful embedding inequality: \begin{align*}
\|f\|_{H^{l+l'}_{\leq k_0}}\lesssim
   2^{k_0l'} \| f \|_{H^{l}_{\leq k_0}}~
\text{ for all}~ l'>0~(\text{see~ the~inequality~ \eqref{l+}}).
\end{align*}

A critical step in the convergence analysis is to show $\varepsilon^2\partial_t j^\varepsilon$ tends to $0$ in some sense as $\varepsilon\rightarrow 0$. Back to \eqref{euqNSM}-(2). Due to the energy estimate  $\varepsilon\nabla j^\varepsilon\in L^2(\mathbb{R}^+,H^{s})$ (see \eqref {uni-2}), we observe that $\varepsilon^2\mu\Delta j^\varepsilon\in L^2(\mathbb{R}^+,H^{s-1})$. Therefore, 
we naturally avoid working in the more restrictive $L^2(\mathbb{R}^+,H^{s})$ space for $\varepsilon^2\partial_t j^\varepsilon.$  In fact, we  obtain that $\varepsilon^2\partial_t j^\varepsilon\in L^2({\mathbb{R}^+;H^{s-1}})$ and $\|\varepsilon^2\partial_tj^\varepsilon\|_{L^2({\mathbb{R}^+;H^{s-2}})}\rightarrow 0 $ as $\varepsilon\rightarrow 0$ in \cite{Z2021}.

However, after performing a truncation, we can prove that $\varepsilon^2\partial_t j^\varepsilon\in L^2({\mathbb{R}^+;H^{s}_{\leq k_0}})$ and \begin{align*}
   \|\varepsilon^2\partial_t j^\varepsilon\|_{L^2(\mathbb{R}^+;H^{s}_{\leq k_0})}^2\lesssim \|\varepsilon j^{\varepsilon,in}\|_{H^{s}}^2+\varepsilon2^{2k_0}
 \text{(see~ Lemma~ \ref{hf})}.
  \end{align*}
The appearance of $2^{2k_0}$
on the right-hand side is natural due to the embedding inequality. It is crucial that there is also $\varepsilon$ appears in front of $2^{2k_0}$. This indicates that for fixed $k_0$, $\varepsilon2^{2k_0}$ can be arbitrarily small if $\varepsilon$ is sufficiently small.

Taking the energy estimates for low frequencies, and applying Lemma \ref{hf},  we can prove (see Lemma \ref{low-diff}) that there exists a positives number $N_0(\eta,k_0)$, if $n_1,n_2\geq N_0,$ then we have
\begin{align*}
\|&u^{\varepsilon_{n_2}}-u^{\varepsilon_{n_1}}\|_{L^\infty(\mathbb{R}^+;H^{s'}_{k\leq k_0})}^2+
\|\na( u^{\varepsilon_{n_2}}-u^{\varepsilon_{n_1}})\|_{L^2(\mathbb{R}^+;{H}^{s'}_{k \leq k_0})}^2\\&+
\|j^{\varepsilon_{n_2}}-j^{\varepsilon_{n_1}}\|_{L^2(\mathbb{R}^+;{H}^{s}_{k \leq k_0})}^2
+\|E^{\varepsilon_{n_2}}-E^{\varepsilon_{n_1}},B^{\varepsilon_{n_2}}-
B^{\varepsilon_{n_1}}\|_{L^\infty(\mathbb{R}^+;H^{s}_{k\leq  k_0})}^2\lesssim \eta.
\end{align*}
  \end{itemize}
 Our strategy is inspired by the method of frequency envelope. This method has been widely used to prove the continuous dependence on the initial date (see, for instance, \cite{GYZ2024, KT2003,T2004}), as well as to address the asymptotic limit problems (see, for instance \cite{MN2008, MN2002}). We believe that this method can also be applied to other limit problems.

The organization of this paper is as follows.
Section 2 is devoted to establishing some preliminary lemmas.
In Section 3, we establish  a priori energy estimates, and then we obtain the global existence of solutions to \eqref{NSM} for small initial data.
In Section 4, we show
$\varepsilon^2\partial_t j^\varepsilon\in L^2({\mathbb{R}^+;H^{s}_{\leq k_0}})$ by using the decay of the electrical field and the dissipation of the magnetic field.
In Section 5, we prove that by extracting an arbitrary  sequence,  $(u^{\varepsilon},\na u^{\varepsilon},j^{\varepsilon},E^{\varepsilon},B^{\varepsilon})$ is a Cauchy sequence in $L^\infty(\mathbb{R}^+;H^{s'})\times L^2(\mathbb{R}^+;{H}^{s'})\times L^2(\mathbb{R}^+;H^{s})\times (L^\infty(\mathbb{R}^+;H^{s}))^2,$ and  the limit functions solve the system \eqref{NSMO}.

We use $A\lesssim B$ to denote the inequality $A\leq CB$, where $C$ is a positive constant  independent of $\varepsilon$. Additionally, we  use $A\thickapprox B$ to represent $C_1A\leq B\leq C_2$, where $C_1$ and $C_2$  are  positive constants, also independent of $\varepsilon$. 
\section{Preliminaries}
\subsection{Besov space and weighted Besov spaces}
Let $(\chi,\varphi)$ be a couple of $C^{\infty}$ functions with
$
\text{Supp}\,\chi \subset \{|\xi|\leq 4/3\},$
$\text{Supp}\,\varphi\subset \{3/4\leq|\xi|\leq 8/3\},$ and
$\chi(\xi)+\sum\limits_{k\geq0}\varphi(\xi/2^k)=1, \forall\,\, \xi.$
Denote $\tilde{h}=\mathcal{F}^{-1}\chi$ and $h=\mathcal{F}^{-1}\varphi$, where $\mathcal{F}^{-1}$ is the inverse Fourier transform.
We define  the dyadic blocks as follows.
\begin{align*}
&\Delta_k u \overset{\text{def}}{=}0,\,\,\text{if}\,\,k\leq -2,\,\,\,
 \Delta_{-1} u\overset{\text{def}}{=}\chi(D)u=\int_{\mathbb{R}^d}\tilde{h}(y)u(x-y)\d y,\\
&\Delta_k u\overset{\text{def}}{=}\varphi(2^{-k}D)u
=2^{kd}\int_{\mathbb{R}^d}{h}(2^ky)u(x-y)\d y,\,\,\text{if}\,\,k\geq 1.
\end{align*}
We also define the low-frequency cutoff
operators $S_k:$
$
S_ku =\sum\limits_{m\leq k-1}\Delta_m u.
$ This dyadic decomposition has nice quasi-orthogonality properties:
\begin{align*}
\Delta_m\Delta_ku=0 \,\,\text{if}\,\, |m-k|\geq 2,\,\,
\Delta_m(S_{k-1}u\Delta_ku)=0,\,\, \text{if}\,\,|m-k|\geq 5.
\end{align*}
Since $\chi$ is supported in a ball and $\varphi$ is supported in an annulus,  this leads to the following Bernstein inequalities.
\begin{lemma}[\cite{BCD2011}]\label{bern}
If $1\leq p\leq q\leq \infty$, then
\begin{align*}
  &\|\na^m \Delta_kf\|_{L^q}\lesssim 2^{k(m+d(1/p-1/q))}\|\Delta_k f\|_{L^p},\,\,k\geq -1,\\
 &2^{km}\|\Delta_k f\|_{L^p}\lesssim \|\na^m\Delta_kf\|_{L^p}\lesssim 2^{km}\|\Delta_k f\|_{L^p}, \,\,k\geq 0.
\end{align*}
\end{lemma}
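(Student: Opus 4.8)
The plan is to exploit the defining feature of the blocks $\Delta_k f$: their Fourier transforms are supported in a ball (when $k=-1$) or in the dyadic annulus $\{\tfrac34 2^{k}\leq |\xi|\leq \tfrac83 2^k\}$ (when $k\geq 0$), and to reduce every estimate to a kernel computation via Young's convolution inequality. The one analytic input I would isolate is a scaling lemma: for $\psi\in C_c^\infty(\mathbb{R}^d)$ and $\psi_k:=\psi(2^{-k}\cdot)$, the multiplier $\psi_k(D)$ is convolution against $K_k(x)=2^{kd}g(2^kx)$ with $g=\mathcal{F}^{-1}\psi$ Schwartz, so that $\|K_k\|_{L^r}=2^{kd(1-1/r)}\|g\|_{L^r}$ and $\|g\|_{L^r}<\infty$ for every $1\leq r\leq\infty$. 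Every scaling factor $2^{k(\cdots)}$ in the statement will come from this single computation.

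For the first inequality I would fix a fattened cutoff $\tilde\varphi\in C_c^\infty$ equal to $1$ on $\mathrm{Supp}\,\varphi$ (and, in the case $k=-1$, a ball cutoff $\tilde\chi$ equal to $1$ on $\mathrm{Supp}\,\chi$), so that $\Delta_k f=\tilde\varphi(2^{-k}D)\Delta_k f$. Then $\na^m\Delta_k f=K_k\ast\Delta_k f$, where $K_k$ is the kernel of the multiplier with symbol $(i\xi)^m\tilde\varphi(2^{-k}\xi)$; the scaling lemma gives $K_k(x)=2^{k(m+d)}g(2^kx)$ with $g=\mathcal{F}^{-1}[(i\xi)^m\tilde\varphi]$. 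Choosing $r$ by $1+\tfrac1q=\tfrac1r+\tfrac1p$, Young's inequality yields
\begin{align*}
\|\na^m\Delta_k f\|_{L^q}\leq \|K_k\|_{L^r}\,\|\Delta_k f\|_{L^p}
\lesssim 2^{k(m+d(1/p-1/q))}\,\|\Delta_k f\|_{L^p},
\end{align*}
where the exponent follows from $d-d/r=d(1/p-1/q)$.

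The upper bound in the second inequality is the first one with $p=q$. For the reverse bound $2^{km}\|\Delta_k f\|_{L^p}\lesssim\|\na^m\Delta_k f\|_{L^p}$, valid for $k\geq 0$, I would invert $\na^m$ on the annulus, where $|\xi|\thickapprox 2^k$ stays away from the origin. Using $|\xi|^{2m}=\sum_{|\alpha|=m}\binom{m}{\alpha}\xi^{2\alpha}$ one writes, on $\mathrm{Supp}\,\tilde\varphi(2^{-k}\cdot)$,
\begin{align*}
\widehat{\Delta_k f}(\xi)=\sum_{|\alpha|=m}\binom{m}{\alpha}(-i)^m\frac{\xi^\alpha}{|\xi|^{2m}}\,\tilde\varphi(2^{-k}\xi)\,\widehat{\partial^\alpha\Delta_k f}(\xi),
\end{align*}
that is, $\Delta_k f=\sum_{|\alpha|=m}m_{\alpha,k}(D)\partial^\alpha\Delta_k f$ with $m_{\alpha,k}$ of symbol $2^{-km}b_\alpha(2^{-k}\xi)$, where $b_\alpha(\eta)=\binom{m}{\alpha}(-i)^m\tfrac{\eta^\alpha}{|\eta|^{2m}}\tilde\varphi(\eta)$ is smooth and supported in the annulus. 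By the scaling lemma each $m_{\alpha,k}(D)$ is convolution against a Schwartz kernel of $L^1$ norm $\lesssim 2^{-km}$, and summing over $|\alpha|=m$ gives $\|\Delta_k f\|_{L^p}\lesssim 2^{-km}\|\na^m\Delta_k f\|_{L^p}$.

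The main (and essentially only) obstacle is the restriction $k\geq 0$ in the reverse bound: inverting $\na^m$ forces division by $|\xi|^{2m}$, which is legitimate precisely because the annular support of $\varphi(2^{-k}\cdot)$ keeps $|\xi|$ bounded below by $\thickapprox 2^k>0$, so that $b_\alpha$ is smooth. At $k=-1$ the Fourier support of $\Delta_{-1}f$ is a full ball containing the origin, where $|\xi|^{-2m}$ is singular; no lower bound of this form can hold there, which is exactly why the second line of the lemma is restricted to $k\geq 0$.
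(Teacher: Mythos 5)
Your proof is correct. The paper does not prove this lemma at all—it is quoted directly from \cite{BCD2011}—and your argument (fattened cutoff, rescaled Schwartz kernel plus Young's inequality for the direct bound; division by $|\xi|^{2m}$ via the multinomial identity on the annulus for the reverse bound, which is exactly what fails on the ball at $k=-1$) is precisely the standard proof of Lemma 2.1 in that reference, with all scaling factors and the $k\geq 0$ restriction accounted for correctly.
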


For $s\in\mathbb{R}, 1\leq p, r \leq +\infty$, and $u\in \mathcal{S}'
(\mathbb{R}^d)$, we  define the Besov space ${B}_{p,r}^{s}$ with the norm
\begin{align*}
\|f\|_{{B}^s_{p,r}}\overset{\text{def}}{=} \Big ( \sum_{k\geq -1} 2^{ksr} \|{\Delta}_k f\|^r_{L^p}   \Big)^{\frac{1}{r}} \,\,\text{if}\,\,r<+\infty, \,\,\text{and}\,\,\|f\|_{{B}^s_{p,\infty}}\overset{\text{def}}{=} \sup_{k\geq -1} 2^{ks} \|{\Delta}_k f\|_{L^p}.
\end{align*}
It is well known that the Sobolev space $H^s$ coincides with the Besov space $B^s_{2,2}$.   In this paper, we let $\|f\|_{H^s}=\|f\|_{B^{s}_{2,2}}.$  We split the $H^s$-norm into two parts as follows:
 \begin{align*}
   \|f\|_{H^s}^2=\|f\|_{H^{s}_{\leq k_0}}^2+\|f\|_{H^{s}_{>k_0}}^2
 \end{align*}
 with
 \begin{align*}
\|f\|_{H^{s}_{\leq k_0}}\overset{\text{def}}{=} \Big ( \sum_{-1\leq k\leq k_0} 2^{2k{s}} \|{\Delta}_k f\|^2_{L^2}   \Big)^{\frac{1}{2}},\,\,
\|f\|_{H^{s}_{>k_0}}\overset{\text{def}}{=} \Big ( \sum_{k> k_0} 2^{2k{s}} \|{\Delta}_k f\|^2_{L^2}   \Big)^{\frac{1}{2}}.
\end{align*}
Direct calculation gives 
\begin{align}\label{ffk0-new}
    \|f\|_{H^{s}_{\leq k_0}}\lesssim \|f\|_{H^{s}}
    \end{align}
and 
\begin{align}
\label{l+}
\|f\|_{H^{s+l'}_{\leq k_0}}\lesssim
   2^{k_0l'} \| f \|_{H^{s}_{\leq k_0}},~\text{ for all}~ l'>0.
\end{align}
These two inequalities will be used extensively in the subsequent analysis.

   We then introduce the definition of frequency weight.
\begin{definition}\label{deft}
Let $\delta>0 $. An acceptable frequency weight $\{\omega_k\}$ is defined as a sequence satisfying $ 1\leq \omega_{k}\leq \omega_{k+1}\leq 2^{\delta}\omega_{k}$ for $k\geq -1$. We denote it by $\omega\in AF(\delta)$.
\end{definition}
With an acceptable frequency weight $\{\omega_k\}$, we slightly modulate the inhomogeneous Besov spaces in the following way:
for $s\in \mathbb{R}$, we define ${B}_{p,r}^{s}(\omega)$ with the norm
\begin{align*}
\|f\|_{{B}^s_{p,r}(\omega)}\overset{\text{def}}{=} \Big ( \sum_{k\geq -1} \omega_k^r2^{ksr} \|{\Delta}_k f\|^r_{L^p}   \Big)^{\frac{1}{r}} \,\,\text{if}\,\,r<+\infty,  \,\,\text{and}\,\,\|f\|_{{B}^s_{p,\infty}(\omega)}\overset{\text{def}}{=} \sup_{k\geq -1} \omega_k2^{ks} \|{\Delta}_k f\|_{L^p}.
\end{align*}
We define
$$\|f\|_{H^s(\omega)}=\|f\|_{{B}_{2,2}^{s}(\omega)}=\Big ( \sum\limits_{k\geq -1} \omega_k^22^{2ks} \|{\Delta}_k f\|^2_{L^2}   \Big)^{\frac{1}{2}}.$$
The properties of $B^s_{p,r}(\omega)$ have a lot in common with those of $B^{s}_{p,r}$.
 \begin{lemma}[\cite{BCD2011}]\label{pro-bes}
We have
\begin{enumerate}
\item If $s\in \mathbb{R}$, $1\leq p,r\leq \infty$, then  $\|\nabla f\|_{B^{s-1}_{p,r}}\lesssim \| f\|_{B^{s}_{p,r}}$,
\item
 If $1\leq p_1\leq p_2\leq \infty$ and $1\leq r_1\leq r_2\leq \infty$, then $B^{s}_{p_1,r_1}\hookrightarrow B^{s-d(1/p_1-1/p_2)}_{p_2,r_2},$
 \item If $s_2< s_1$ and $1\leq p,r_1,r_2\leq \infty$, then $B^{s_1}_{p,r_1}\hookrightarrow B^{s_2}_{p,r_2},$
 \item If $1\leq p\leq \infty$, then $B^{d/p}_{p,1}\hookrightarrow L^\infty.$
 \end{enumerate}
\end{lemma}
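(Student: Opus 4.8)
The plan is to reduce every assertion to the Bernstein inequalities of Lemma~\ref{bern}, applied one dyadic block at a time, followed by an elementary inequality for the sequence spaces $\ell^r$. Throughout I write $a_k = 2^{ks}\|\Delta_k f\|_{L^p}$ for the Littlewood--Paley coefficients, so that $\|f\|_{B^s_{p,r}} = \|(a_k)_{k\ge -1}\|_{\ell^r}$, and the whole lemma becomes a collection of weighted pointwise bounds on $(a_k)$ together with the monotonicity embedding $\ell^{r_1}\hookrightarrow\ell^{r_2}$ for $r_1\le r_2$.

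For (1), the second Bernstein inequality gives $\|\na\Delta_k f\|_{L^p}\lesssim 2^k\|\Delta_k f\|_{L^p}$ for $k\ge 0$, while for the single block $k=-1$ the first Bernstein inequality with $p=q$ and $m=1$ does the job; multiplying through by $2^{k(s-1)}$ turns this into $2^{k(s-1)}\|\na\Delta_k f\|_{L^p}\lesssim 2^{ks}\|\Delta_k f\|_{L^p}$, and taking $\ell^r$ norms finishes the estimate. For (2), I would apply the first Bernstein inequality with $m=0$ and exponents $p_1\le p_2$ to get $\|\Delta_k f\|_{L^{p_2}}\lesssim 2^{kd(1/p_1-1/p_2)}\|\Delta_k f\|_{L^{p_1}}$; after multiplying by $2^{k(s-d(1/p_1-1/p_2))}$ this says precisely that the $B^{s-d(1/p_1-1/p_2)}_{p_2,\cdot}$-coefficients are dominated by the $B^s_{p_1,\cdot}$-coefficients, and the passage from $r_1$ to $r_2$ is absorbed by $\ell^{r_1}\hookrightarrow\ell^{r_2}$.

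For (3), I would factor $2^{ks_2}\|\Delta_k f\|_{L^p}=2^{-k(s_1-s_2)}\bigl(2^{ks_1}\|\Delta_k f\|_{L^p}\bigr)$ and observe that, since $s_1>s_2$, the weight $2^{-k(s_1-s_2)}$ is bounded on $k\ge -1$ and geometrically summable at high frequencies, hence lies in every $\ell^{r_2}$; combined with $\ell^{r_1}\hookrightarrow\ell^{\infty}$ this controls the $\ell^{r_2}$ norm of the left-hand sequence by the $\ell^{r_1}$ norm of $\bigl(2^{ks_1}\|\Delta_k f\|_{L^p}\bigr)$, for all admissible $r_1,r_2$. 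For (4), I would use the telescoping identity $f=\sum_{k\ge -1}\Delta_k f$ together with the endpoint Bernstein bound $\|\Delta_k f\|_{L^\infty}\lesssim 2^{kd/p}\|\Delta_k f\|_{L^p}$ to obtain $\|f\|_{L^\infty}\le\sum_{k\ge -1}\|\Delta_k f\|_{L^\infty}\lesssim\sum_{k\ge -1}2^{kd/p}\|\Delta_k f\|_{L^p}=\|f\|_{B^{d/p}_{p,1}}$.

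The only genuinely delicate points are bookkeeping rather than analysis: one must treat the block $k=-1$ separately, since the sharp form of Bernstein there holds only with $p=q$ and without the accompanying lower bound; and in (4) the summability of $\sum_{k}2^{kd/p}\|\Delta_k f\|_{L^p}$ is exactly why the third index must be $r=1$, the critical embedding failing for $r>1$. Since all four statements are classical and recorded in \cite{BCD2011}, I expect the ``hard part'' to be no more than verifying these endpoint conventions; no estimate beyond Lemma~\ref{bern} and the monotonicity of $\ell^r$-norms is required.
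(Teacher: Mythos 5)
Your proof is correct: the paper gives no argument for this lemma at all (it is quoted from \cite{BCD2011}), and your blockwise Bernstein estimates combined with the monotonicity of $\ell^r$-norms, the H\"older-type factorization in (3), and the telescoping sum in (4) reproduce exactly the standard proof from that reference. The endpoint cautions you flag --- treating the $k=-1$ block via the first Bernstein inequality (which, per Lemma \ref{bern}, does hold for $k\geq -1$, only the reverse bound failing there) and the necessity of the third index $r=1$ in (4) --- are handled correctly, so nothing is missing.
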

It is obvious that the following inequalities are valid for $B^s_{p,r}(\omega)$.
 \begin{lemma}
Suppose that $\omega\in AF(\delta)$. We have
\begin{enumerate}
\item If $s\in \mathbb{R}$, $1\leq p,r\leq \infty$, then $\|\nabla f\|_{B^{s-1}_{p,r}(\omega)}\leq C \| f\|_{B^{s}_{p,r}(\omega)}$,
\item
 If $1\leq p_1\leq p_2\leq \infty$ and $1\leq r_1\leq r_2\leq
 \infty$, then $$\|f\|_{B^{s-d(1/p_1-1/p_2)}_{p_2,r_2}(\omega)}\leq C\|f\|_{B^{s}_{p_1,r_1}(\omega)},$$
 \item If $s_2<s_1$ and $1\leq p,r_1,r_2\leq\infty$, then $\|f\|_{B^{s_2}_{p,r_2}(\omega)}\leq C\|f\|_{B^{s_1}_{p,r_1}(\omega)},$
 \item If $s\in \mathbb{R}$,  $1\leq p,r\leq \infty$, then $\|f\|_{B^{s}_{p,r}}\leq C\|f\|_{B^{s}_{p,r}(\omega)}.$
 \end{enumerate}
In the above, the constant  $C$ is independent of $\omega.$
 \end{lemma}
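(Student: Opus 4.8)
The plan is to observe that all four assertions are the weighted analogues of the unweighted statements in Lemma \ref{pro-bes}, and that each of them is \emph{diagonal} in frequency: on both sides the only quantity involved is $\|\Delta_k f\|_{L^p}$ at the \emph{same} dyadic index $k$. Consequently the weight $\omega_k$ enters identically on the two sides of every block-wise inequality and simply factors through. I would therefore redo the standard unweighted proofs block by block, carrying the factor $\omega_k 2^{ks}$ unchanged, and then take the $\ell^r$ norm in $k$. A useful guiding remark is that the \emph{acceptable} condition $\omega_{k+1}\le 2^{\delta}\omega_k$ plays no role here (it is needed only for product/paraproduct estimates); the sole structural property invoked is $\omega_k\ge 1$, in part (4). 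In particular every constant produced below depends only on the Bernstein constants and on $s,s_1,s_2,r_2$, hence is independent of $\omega$, as required.

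For (1), since $\nabla$ and $\Delta_k$ are both Fourier multipliers they commute, so $\Delta_k(\nabla f)=\nabla\Delta_k f$. Bernstein's inequality (Lemma \ref{bern}) gives $\|\nabla\Delta_k f\|_{L^p}\lesssim 2^{k}\|\Delta_k f\|_{L^p}$ for $k\ge -1$, and multiplying by $\omega_k 2^{k(s-1)}$ turns the left-hand density into $\omega_k 2^{ks}\|\Delta_k f\|_{L^p}$ up to a uniform constant; taking $\ell^r$ in $k$ yields the claim. For (2) the same scheme applies with the $L^{p_1}\to L^{p_2}$ Bernstein inequality $\|\Delta_k f\|_{L^{p_2}}\lesssim 2^{kd(1/p_1-1/p_2)}\|\Delta_k f\|_{L^{p_1}}$; after multiplying by the weight, the gain $2^{kd(1/p_1-1/p_2)}$ cancels exactly against the lowered regularity index, and the passage from $\ell^{r_1}$ to $\ell^{r_2}$ is free because $r_1\le r_2$ gives the monotone inclusion $\ell^{r_1}\hookrightarrow\ell^{r_2}$.

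The only mildly nontrivial point is (3), the embedding in the regularity index, which is where I expect the small amount of genuine book-keeping. Setting $a_k=\omega_k 2^{ks_1}\|\Delta_k f\|_{L^p}$, the density sequence defining $\|f\|_{B^{s_2}_{p,r_2}(\omega)}$ is $2^{-k(s_1-s_2)}a_k$. Since $s_1-s_2>0$ and $k\ge -1$, the factor $2^{-k(s_1-s_2)}$ is geometrically summable, so I would estimate
\begin{align*}
\big\|2^{-k(s_1-s_2)}a_k\big\|_{\ell^{r_2}}\le \big\|2^{-k(s_1-s_2)}\big\|_{\ell^{r_2}}\,\|a_k\|_{\ell^{\infty}}\le C\,\|a_k\|_{\ell^{r_1}},
\end{align*}
using $\ell^{r_1}\hookrightarrow\ell^{\infty}$ in the last step; the constant depends only on $s_1-s_2$ and $r_2$. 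Finally (4) is immediate: because $\omega_k\ge 1$ one has $2^{ks}\|\Delta_k f\|_{L^p}\le \omega_k 2^{ks}\|\Delta_k f\|_{L^p}$ term by term, and taking $\ell^r$ gives $\|f\|_{B^s_{p,r}}\le\|f\|_{B^s_{p,r}(\omega)}$ with constant $1$. In short, there is no real obstacle: the weight being diagonal in frequency reduces every inequality to the corresponding unweighted Bernstein estimate, and the $\omega$-independence of the constants is automatic.
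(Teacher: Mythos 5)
Your proof is correct, and it is precisely the routine verification the paper has in mind: the paper states this lemma without proof (``It is obvious that...''), since each inequality is diagonal in frequency and reduces block-by-block to the unweighted Bernstein estimates with the factor $\omega_k 2^{ks}$ carried through unchanged. Your additional observation that acceptability ($\omega_{k+1}\le 2^{\delta}\omega_k$) is never used here and that only $\omega_k\ge 1$ enters in part (4) is also accurate.
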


Considering the product acting on  $f$ and $g$, we employ the Bony decomposition:
\begin{align}\label{Bony}
fg=T_fg + T_gf+ R(f,g),
\end{align}
where the paraproduct $f$ and the remainder $g$ are defined by
 \begin{align*}
 T_fg=\sum S_{l-1}f\Delta_lg,\,\,R(f,g)=\sum\Delta_l f \tilde{\Delta}_l g\triangleq \sum\Delta_l f (\sum_{|l'-l|\leq 1}\Delta_{l'}) g.
\end{align*}
\begin{lemma}[\cite{BCD2011}] \label{paraproduct}
For any $s\in\mathbb{R}$ and $1\leq p,r\leq \infty$,
the following estimates hold:
\begin{align*}
 &\|T_{f}g\|_{B_{p,r
}^{s}} \lesssim \|f\|_{L^\infty}\|\nabla g\|_{B_{p,r}^{s-1}},\\
&\|T_{f}g\|_{B_{p,r
}^{s}}
\lesssim \|f\|_{B_{\infty,\infty}^{t}} \|\nabla g\|_{B_{p,r}^{s-t-1}}, \, t<0.
\end{align*}
For any $s_1,s_2\in\mathbb{R}$ and $1\leq p,p_1,p_2,r,r_1,r_2\leq \infty$
such that $s_1+s_2>0, 1/p_1+1/p_2=1/p$ and $1/r_1+1/r_2=1/r$, we have
\begin{align*}
 \|R(f,g)\|_{B_{p,r
}^{s_1+s_2}} \lesssim \|f\|_{B_{p_1,r_1}^{s_1}}\|g\|_{B_{p_2,r_2}^{s_2}}.
\end{align*}
\end{lemma}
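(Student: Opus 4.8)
The plan is to reduce every bound to a single fixed-scale estimate on one dyadic block and then resum using the quasi-orthogonality relations recorded above together with Bernstein's inequality (Lemma \ref{bern}). I would handle the two paraproduct bounds first and the remainder bound last, since the remainder is the only place where the hypothesis $s_1+s_2>0$ is genuinely used.

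For the paraproduct, write $T_fg=\sum_l S_{l-1}f\,\Delta_l g$. The Fourier support of $S_{l-1}f\,\Delta_l g$ lies in an annulus of size $2^l$, so the relation $\Delta_m(S_{k-1}u\,\Delta_k u)=0$ for $|m-k|\ge 5$ collapses $\Delta_k T_f g$ to the finite band $\sum_{|l-k|\le 4}\Delta_k(S_{l-1}f\,\Delta_l g)$. I would bound each term by H\"older as $\|\Delta_k(S_{l-1}f\,\Delta_l g)\|_{L^p}\lesssim\|S_{l-1}f\|_{L^\infty}\|\Delta_l g\|_{L^p}$. For the first inequality I use $\|S_{l-1}f\|_{L^\infty}\lesssim\|f\|_{L^\infty}$ uniformly; for the second I expand $S_{l-1}f=\sum_{m\le l-2}\Delta_m f$ and sum the geometric series $\sum_{m\le l-2}2^{-mt}\lesssim 2^{-lt}$, which converges precisely because $t<0$, to get $\|S_{l-1}f\|_{L^\infty}\lesssim 2^{-lt}\|f\|_{B^t_{\infty,\infty}}$. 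In both cases I replace $\|\Delta_l g\|_{L^p}$ by $2^{-l}\|\Delta_l\nabla g\|_{L^p}$ via Bernstein, multiply by $2^{ks}$, use $2^{ks}\approx 2^{ls}$ on the band $|l-k|\le 4$, and take the $\ell^r_k$ norm; since the sum over $l$ is finite-banded, Young's inequality yields the claimed $\|f\|_{L^\infty}\|\nabla g\|_{B^{s-1}_{p,r}}$ and $\|f\|_{B^t_{\infty,\infty}}\|\nabla g\|_{B^{s-t-1}_{p,r}}$ respectively.

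For the remainder, write $R(f,g)=\sum_l\Delta_l f\,\tilde{\Delta}_l g$. Now the Fourier support of $\Delta_l f\,\tilde{\Delta}_l g$ is contained in a ball of radius $\lesssim 2^l$, so $\Delta_k(\Delta_l f\,\tilde{\Delta}_l g)=0$ unless $l\ge k-N_1$ for a fixed $N_1$; this is the key structural difference from the paraproduct, turning the localization into a one-sided high-frequency tail. H\"older with $1/p=1/p_1+1/p_2$ gives $\|\Delta_k(\Delta_l f\,\tilde{\Delta}_l g)\|_{L^p}\lesssim\|\Delta_l f\|_{L^{p_1}}\|\tilde{\Delta}_l g\|_{L^{p_2}}$, and I factor $2^{k(s_1+s_2)}=2^{(k-l)(s_1+s_2)}\,2^{ls_1}2^{ls_2}$, pairing $2^{ls_1}\|\Delta_l f\|_{L^{p_1}}$ and $2^{ls_2}\|\tilde{\Delta}_l g\|_{L^{p_2}}$ with the two target Besov norms. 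The decisive point is that $s_1+s_2>0$ makes $2^{(k-l)(s_1+s_2)}$ summable over the tail $l\ge k-N_1$, so after taking $\ell^r_k$ I apply Young's inequality in $k$ together with H\"older in $l$ under $1/r_1+1/r_2=1/r$ to reach $\|R(f,g)\|_{B^{s_1+s_2}_{p,r}}\lesssim\|f\|_{B^{s_1}_{p_1,r_1}}\|g\|_{B^{s_2}_{p_2,r_2}}$.

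I expect the main obstacle to be bookkeeping rather than conceptual: in the remainder estimate one must juggle the three summation exponents $r,r_1,r_2$ simultaneously, so the delicate step is organizing the discrete Young and H\"older inequalities so that the one-sided tail $\sum_{l\ge k-N_1}2^{(k-l)(s_1+s_2)}$ is summed first, using $s_1+s_2>0$, while the product structure in $f$ and $g$ is preserved. For the paraproduct the only subtlety is ensuring that the geometric sum defining $\|S_{l-1}f\|_{L^\infty}$ converges, which is exactly what forces the sign condition $t<0$.
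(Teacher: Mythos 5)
Your proposal is correct and follows essentially the same argument the paper uses (in its proof of the weighted analogue, Lemma \ref{paraproduct-omega}): quasi-orthogonality to collapse $\Delta_k T_f g$ to the band $|l-k|\le 4$, H\"older plus $\|S_{l-1}f\|_{L^\infty}\lesssim\|f\|_{L^\infty}$ or $\lesssim 2^{-lt}\|f\|_{B^t_{\infty,\infty}}$ with reverse Bernstein on $\Delta_l g$, and for the remainder the one-sided tail $l\ge k-3$ with the factorization $2^{k(s_1+s_2)}=2^{(k-l)(s_1+s_2)}2^{ls_1}2^{ls_2}$, summed via discrete Young and H\"older using $s_1+s_2>0$. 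The only cosmetic difference is that you re-derive $\|S_{l-1}f\|_{L^\infty}\lesssim 2^{-lt}\|f\|_{B^t_{\infty,\infty}}$ by the geometric series in $t<0$, where the paper cites Proposition 2.79 of \cite{BCD2011}.
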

\begin{lemma} \label{paraproduct-omega}
Suppose that $\omega\in AF(\delta)$.
For any $s\in\mathbb{R}$ and $1\leq p,r\leq \infty$, the following estimates hold:
\begin{align*}
 &\|T_{f}g\|_{B_{p,r
}^{s}(\omega)} \leq C(\delta)\|f\|_{L^\infty}\|\nabla g\|_{B_{p,r}^{s-1}(\omega)},
\\
&\|T_{f}g\|_{B_{p,r
}^{s}(\omega)}
\leq C(\delta)\|f\|_{B_{\infty,\infty}^{t}} \|\nabla g\|_{B_{p,r}^{s-t-1}(\omega)}, \, t<0.
\end{align*}
For any $s_1,s_2\in\mathbb{R}$ and $1\leq p,p_1,p_2,r,r_1,r_2\leq \infty$
such that $s_1+s_2>0, 1/p_1+1/p_2=1/p$ and $1/r_1+1/r_2=1/r$, we have
\begin{align*}
 &\|R(f,g)\|_{B_{p,r
}^{s_1+s_2}(\omega)} \leq C(\delta) \|f\|_{B_{p_1,r_1}^{s_1}}\|g\|_{B_{p_2,r_2}^{s_2}(\omega)},\\&\|R(f,g)\|_{B_{p,r
}^{s_1+s_2}(\omega)} \leq C(\delta) \|f\|_{B_{p_1,r_1}^{s_1}(\omega)}\|g\|_{B_{p_2,r_2}^{s_2}}.
\end{align*}
The constant $C(\delta)$ is 
independent of the exact values of $\omega$ and depends only  on $\delta$.
\end{lemma}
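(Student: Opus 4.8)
The plan is to reduce each weighted estimate to its unweighted counterpart in Lemma \ref{paraproduct}, exploiting the fact that an acceptable weight $\omega\in AF(\delta)$ varies slowly across neighbouring dyadic scales (Definition \ref{deft}). The structural observation underpinning everything is that both the paraproduct and the remainder preserve frequency localization up to a bounded shift, so that whenever a block $\Delta_k$ of the output interacts with a block at scale $l$, the weights $\omega_k$ and $\omega_l$ are comparable up to a constant depending only on $\delta$.

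First I would treat the paraproduct $T_fg=\sum_l S_{l-1}f\,\Delta_l g$. Since $S_{l-1}f\,\Delta_l g$ has Fourier support in an annulus of size $\sim 2^l$, the quasi-orthogonality recorded after the dyadic decomposition gives $\Delta_k(T_fg)=\sum_{|l-k|\leq N_0}\Delta_k(S_{l-1}f\,\Delta_l g)$ for a fixed integer $N_0$. On this finite range, iterating the defining inequality $\omega_k\leq\omega_{k+1}\leq 2^\delta\omega_k$ yields $2^{-N_0\delta}\omega_l\leq\omega_k\leq 2^{N_0\delta}\omega_l$, so $\omega_k\approx\omega_l$ with a constant $C(\delta)$. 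I would then bound each block exactly as in the unweighted proof of Lemma \ref{paraproduct}, using $\|S_{l-1}f\|_{L^\infty}\lesssim\|f\|_{L^\infty}$ (respectively $\|S_{l-1}f\|_{L^\infty}\lesssim 2^{-lt}\|f\|_{B^t_{\infty,\infty}}$ for $t<0$, which follows by summing $\sum_{m\leq l-2}2^{-mt}\approx 2^{-lt}$), insert the comparison $\omega_k 2^{ks}\approx\omega_l 2^{ls}$, and take the $\ell^r$ norm in $k$. Because the inner sum over $l$ is finite and localized, this produces $\|(\omega_l 2^{ls}\|\Delta_l g\|_{L^p})_l\|_{\ell^r}$, which equals $\|\na g\|_{B^{s-1}_{p,r}(\omega)}$ after the Bernstein inequality $2^l\|\Delta_l g\|_{L^p}\lesssim\|\na\Delta_l g\|_{L^p}$ of Lemma \ref{bern}; this gives the first two claimed bounds.

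For the remainder $R(f,g)=\sum_l\Delta_l f\,\tilde{\Delta}_l g$, the product $\Delta_l f\,\tilde{\Delta}_l g$ is frequency-supported in a \emph{ball} of radius $\sim 2^l$, so $\Delta_k R(f,g)=\sum_{l\geq k-N_1}\Delta_k(\Delta_l f\,\tilde{\Delta}_l g)$ for a fixed $N_1$. Here the range of $l$ is unbounded above, so two-sided comparability of the weights fails; however, monotonicity of the weight gives the one-sided bound $\omega_k\leq\omega_{l+N_1}\leq 2^{N_1\delta}\omega_l$, i.e. $\omega_k\lesssim\omega_l$. This is exactly what is needed: I can transfer the output weight onto whichever factor I wish to weight (either $g$ or $f$, giving the two symmetric estimates) at the cost of a constant $C(\delta)$. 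Writing $2^{k(s_1+s_2)}=2^{(k-l)(s_1+s_2)}2^{l(s_1+s_2)}$ and distributing $2^{ls_1}$ onto $f$ and $\omega_l 2^{ls_2}$ onto $g$, the factor $2^{(k-l)(s_1+s_2)}\mathbf{1}_{\{l\geq k-N_1\}}$ defines an $\ell^1$ convolution kernel in $k-l$ precisely because $s_1+s_2>0$; Young's inequality then closes the $\ell^r$ estimate and yields the two remainder bounds.

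The only genuine subtlety, and the step I would flag as the crux, is the asymmetry between the two operations in how the weight is moved. For the paraproduct the interacting scales satisfy $|l-k|\leq N_0$, so $\omega_k$ and $\omega_l$ are two-sided comparable and the weight may be shifted freely; for the remainder only the one-sided bound $\omega_k\lesssim\omega_l$ holds, but since both input factors sit at the same high frequency $l\gtrsim k$, this one-sided control is enough to weight whichever factor we like. In every case the emerging constant is a finite power of $2^\delta$ times the constant from Lemma \ref{paraproduct}, so $C(\delta)$ depends only on $\delta$ and not on the particular sequence $\{\omega_k\}$, as asserted.
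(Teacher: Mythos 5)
Your proof is correct and follows essentially the same route as the paper's: the same quasi-orthogonality decompositions $\Delta_k T_fg=\sum_{|l-k|\leq 4}\Delta_k(S_{l-1}f\Delta_l g)$ and $\Delta_k R(f,g)=\sum_{l\geq k-3}\Delta_k(\Delta_l f\tilde{\Delta}_l g)$, the same two-sided weight comparison $2^{-4\delta}\leq\omega_k/\omega_l\leq 2^{4\delta}$ for the paraproduct versus the one-sided bound $\omega_k\leq 2^{3\delta}\omega_l$ for the remainder, and the same closing steps via Bernstein and the $\ell^r$/Young convolution argument using $s_1+s_2>0$. Your explicit remark on the paraproduct/remainder asymmetry in moving the weight is precisely the point the paper's proof exploits implicitly.
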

\begin{proof}
The proof of this lemma is very similar to that of Lemma \ref{paraproduct}.  For the paraproduct operator $T$, due to the definition of $S_k$,  we see
\begin{align*}
\Delta_k T_fg =\sum\limits_{l\geq 1,|l-k|\leq 4}\Delta_k(S_{l-1}f\Delta_lg),
\end{align*}
from which it follows
\begin{align*}
\|\Delta_k T_fg \|_{L^{p}}
&\lesssim \sum\limits_{l\geq 1, |l-k|\leq 4} \|S_{l-1}f\|_{L^\infty}\|\Delta_lg\|_{L^p}.
\end{align*}
Since $|l-k|\leq 4$, by virtue of the definition of $\{\omega_k\}$, we see
$2^{-4\delta }\leq \omega_k/\omega_l\leq 2^{4\delta}.$ 
This leads to
\begin{align*}
2^{ks}\omega_{k}\|\Delta_k T_fg \|_{L^{p}}
&\leq C(\delta) \sum\limits_{l\geq 1, |l-k|\leq 4} 2^{ls}\omega_{l}\|S_{l-1}f\|_{L^\infty}\|\Delta_lg\|_{L^p}.
\end{align*}
According to Lemma \ref{bern}, we have $\|\Delta_lg\|_{L^p}\lesssim 2^{-l}\|\Delta_l\nabla g\|_{L^p}$ with $l\geq 1$.
By using $\|S_{l-1}f\|_{L^\infty}\leq \|f\|_{L^\infty}$ (see, Young’s inequality for the convolution) or  $\|S_{l-1}f\|_{L^\infty}\lesssim 2^{-lt}\|f\|_{B^{t}_{\infty,\infty}}$  with $t<0$ (see, Proposition 2.79 in \cite{BCD2011}), we obtain that
\begin{align*}
 2^{ks}\omega_{k}\|\Delta_k T_fg \|_{L^{p}}
&\leq C(\delta) \|f\|_{L^\infty}\sum\limits_{ |l-k|\leq 4}2^{l{(s-1)}}\omega_l \|\Delta_l\na g\|_{L^p},
\end{align*}
or alternatively,
\begin{align*}
 2^{ks}\omega_{k}\|\Delta_k T_fg \|_{L^{p}}
&\leq C(\delta)\|f\|_{B^{t}_{\infty,\infty}}\sum\limits_{ |l-k|\leq 4}2^{l(s-t-1)}\omega_l \|\Delta_l\nabla g\|_{L^p}.
\end{align*}
Then taking the $l^r$-norm on both sides leads to the desired continuity properties of $T$.

For the remainder operator $R$, we have
\begin{align*}
\Delta_kR(f,g)=\sum_{l\geq k-3}\Delta_{k}(\Delta_l f \tilde{\Delta}_l g),
\end{align*}
from which it follows
\begin{align*}
\|\Delta_k R(f,g) \|_{L^{p}}
&\lesssim \sum\limits_{l\geq k-3} \|\Delta_l f\|_{L^{p_1}}\|\tilde{\Delta}_lg\|_{L^{p_2}}.
\end{align*}
Note that if $l\geq k-3$, then $ \omega_k\leq2^{3\delta} \omega_l$.  This entails that
\begin{align*}
2^{k(s_{1}+s_{2})}
\omega_{k}
 \|\Delta_k R(f,g) \|_{L^{p}}
 \leq C(\delta)\sum_{l\geq k-3}
2^{(k-l)(s_{1}+s_{2})}2^{ls_1}
\|\Delta_l f\|_{L^{p_{1}}}
\omega_l2^{ls_{2}}\|\tilde{\Delta}_l g\|_{L^{p_{2}}}.
\end{align*}
Then we can take the $l^r$-norm on both sides to obtain the desired continuity properties of $R$. This completes the proof of the lemma.
\end{proof}
The following lemma plays  a fundamental role in this paper.
\begin{lemma}\label{frequency}
Suppose $f^n\rightarrow f$ in $H^s$. Then there exists a sequence $\{\omega_k\}$ of positive numbers which satisfies
\begin{align*}
    \omega\in AF(\tfrac{1}{2}),\, \,\text{and}\,\,\lim\limits_{k\rightarrow +\infty}\omega_k=+\infty,
\end{align*}
such that for all $n$, $f^n\in H^s(\omega)$. Moreover, we have,
$\sup\limits_{n}\|f^n\|_{H^s(\omega)}<\infty$.
\end{lemma}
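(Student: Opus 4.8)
The plan is to reduce everything to a single \emph{uniform tail-smallness} property that $H^s$-convergence provides, and then to manufacture the weight by a dyadic staircase tailored to that smallness. \textbf{Step 1 (uniform control of high-frequency tails).} First I would record that $f^n\to f$ in $H^s$ forces $\lim_{k_0\to\infty}\sup_n\|f^n\|_{H^s_{>k_0}}=0$. Indeed, fix $\eta>0$ and choose $N$ with $\|f^n-f\|_{H^s}<\eta$ for $n\ge N$; restricting the triangle inequality to high frequencies gives $\|f^n\|_{H^s_{>k_0}}\le\|f^n-f\|_{H^s}+\|f\|_{H^s_{>k_0}}\le \eta+\|f\|_{H^s_{>k_0}}$, and the last term tends to $0$ since $f\in H^s$. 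The finitely many indices $n<N$ each lie in $H^s$, so their tails are individually small for large $k_0$, and taking the maximum over this finite set closes the bound. Consequently the nonincreasing quantity $\tau(k_0):=\sup_n\|f^n\|_{H^s_{>k_0}}^2$ satisfies $\tau(k_0)\to 0$, while $\tau(-2)=\sup_n\|f^n\|_{H^s}^2=:M^2<\infty$ (a convergent sequence is bounded).

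\textbf{Step 2 (staircase weight).} Write $a_k^n:=2^{2ks}\|\Delta_k f^n\|_{L^2}^2$. Using $\tau(k_0)\to 0$, I would select integers $0\le m_1<m_2<\cdots$ inductively so that $\tau(m_j)\le 4^{-j}$, and set $\sigma_k:=\#\{j\ge 1:m_j<k\}$ together with $\omega_k:=2^{\sigma_k/2}$. Since the thresholds $m_j$ are \emph{distinct integers}, at most one of them can equal a given $k$, whence $\sigma_{k+1}-\sigma_k\in\{0,1\}$. This simultaneously yields $1\le\omega_k\le\omega_{k+1}\le 2^{1/2}\omega_k$, i.e. $\omega\in AF(\tfrac12)$, and $\omega_k\to\infty$ (because $\sigma_k\to\infty$ as more thresholds fall below $k$).

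\textbf{Step 3 (uniform bound).} Finally I would estimate $\|f^n\|_{H^s(\omega)}^2=\sum_{k\ge-1}\omega_k^2a_k^n$ blockwise over $m_j<k\le m_{j+1}$, on which $\sigma_k\equiv j$ and hence $\omega_k^2=2^j$. This gives $\sum_{m_j<k\le m_{j+1}}\omega_k^2a_k^n\le 2^j\sum_{k>m_j}a_k^n\le 2^j\tau(m_j)\le 2^{-j}$, while the base block $-1\le k\le m_1$ contributes at most $M^2$. Summing the geometric series yields $\sup_n\|f^n\|_{H^s(\omega)}^2\le M^2+1<\infty$, which is the assertion.

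\textbf{Main obstacle.} The delicate point is the tension between the two demands on $\omega$: it must diverge yet be permitted to grow by at most the factor $2^{1/2}$ per dyadic level. A naive choice such as $\omega_k^2=1/\tau(k)$ would diverge but generally violate the $AF(\tfrac12)$ growth bound. The staircase built from distinct integer thresholds is exactly what enforces the $AF(\tfrac12)$ condition automatically, while the geometric gap $\tau(m_j)\le 4^{-j}$ defeats the weight growth $2^j$ on each block. The only subtlety is to bound within each block by the \emph{global} tail $\tau(m_j)$ rather than by a blockwise sum, so that one and the same weight serves all $n$ at once and no interchange of $\sup_n$ with an infinite sum is needed.
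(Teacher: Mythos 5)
Your proposal is correct and follows essentially the same route as the paper (which itself adapts Lemma 4.1 of Koch--Tzvetkov): you extract a strictly increasing sequence of integer thresholds at which the uniform high-frequency tail $\sup_n\|f^n\|_{H^s_{>m_j}}^2$ decays like $4^{-j}$, assign the piecewise-constant staircase weight $2^{j/2}$ on the $j$-th block so that the $AF(\tfrac12)$ condition holds automatically, and beat the weight growth $2^j$ by the tail decay $4^{-j}$ in a geometric series. The only cosmetic difference is that you derive the uniform tail smallness directly from the triangle inequality, whereas the paper phrases it as $\ell^1$-convergence of the sequences $\{2^{2ks}\|\Delta_k f^n\|_{L^2}^2\}_k$; these are the same fact.
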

\begin{proof}
The proof proceeds in essentially the same manner as Lemma 4.1 in \cite{KT2003}. For the reader's convenience, we provide a brief sketch of the proof here.
The assumption $f^n\rightarrow f$ in $H^s$ implies
\begin{align*}
\{2^{2ks} \|{\Delta}_k f^n\|^2_{L^2}\}_{k\geq -1}\rightarrow \{2^{2ks} \|{\Delta}_k f\|^2_{L^2}\}_{k\geq -1}\,\,{\text{in}}\,\,l^1.
\end{align*}
Then for all $m\in \mathbb{N}$, there exist a strictly monotone $\{\mathcal{N}_m\}$ such that
\begin{align*}
\sup_{n}\Big(\sum\limits_{k=\mathcal{N}_m}^{+\infty}2^{2ks} \|{\Delta}_k f^n\|^2_{L^2}\Big)\leq 2^{-2m}.
\end{align*}
 If $-1\leq k<\mathcal{N}_1$, we set $\omega_k=1$;  if $\mathcal{N}_m\leq k<\mathcal{N}_{m+1}$ ($m\in \mathbb{N}$), we set $\omega_k=2^{m/2}$. Then we  obtain that
\begin{align*}
\sup_n\Big(\sum_{k\geq -1}w_k^22^{2ks} \|{\Delta}_k f^n\|^2_{L^2}\Big)
=&\sup_{n}\Big(\sum_{k=-1}^{\mathcal{N}_1-1}2^{2ks} \|{\Delta}_k f^n\|^2_{L^2}+\sum_{m=1}^{+\infty}
\sum_{k=\mathcal{N}_m}^{\mathcal{N}_{m+1}-1}\omega_k^22^{2ks} \|{\Delta}_k f^n\|^2_{L^2}\Big)\\
\leq &
\sup_n\Big(\|f^n\|_{H^s}^2+\sum_{m=1}^{+\infty}2^m2^{-2m}\Big)<\infty.
\end{align*}
This completes the proof of the lemma.
\end{proof}
Letting $f^n\equiv f$, we have the following straightforward corollary.
\begin{corollary}\label{corollary-fre}
    Suppose $f$ in $H^s$. Then there exists a sequence $\{\omega_k\}$ of positive numbers which satisfies
\begin{align*}
    \omega\in AF(\tfrac{1}{2}),\, \,\text{and}\,\,\lim\limits_{k\rightarrow +\infty}\omega_k=+\infty,
\end{align*}
such that $f\in H^s(\omega)$. 
\end{corollary}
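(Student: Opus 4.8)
The plan is to deduce this directly from Lemma \ref{frequency}, exploiting the fact that a single fixed function can be viewed as a constant sequence. First I would set $f^n \equiv f$ for every $n$, so that trivially $f^n \rightarrow f$ in $H^s$ (the differences vanish identically). This is the only hypothesis Lemma \ref{frequency} requires, so it applies verbatim and furnishes a weight $\{\omega_k\}$ with $\omega \in AF(\tfrac{1}{2})$ and $\lim_{k\rightarrow+\infty}\omega_k = +\infty$ such that $f^n \in H^s(\omega)$ for all $n$, together with the uniform bound $\sup_n \|f^n\|_{H^s(\omega)} < \infty$. Since $f^n = f$, this last statement reads $\|f\|_{H^s(\omega)} < \infty$, i.e. $f \in H^s(\omega)$, which is exactly the claim.

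There is no genuine obstacle here: the entire mechanism sits inside Lemma \ref{frequency}, and specializing to a constant sequence collapses the supremum over $n$ into a single finite weighted norm. For completeness I would note that if one preferred a self-contained argument, it would simply repeat the construction of Lemma \ref{frequency} in the degenerate case. The key input is that $f \in H^s = B^{s}_{2,2}$ means the sequence $\{2^{2ks}\|\Delta_k f\|_{L^2}^2\}_{k\geq -1}$ lies in $\ell^1$, so its tails $\sum_{k\geq N}2^{2ks}\|\Delta_k f\|_{L^2}^2$ tend to $0$ as $N\rightarrow\infty$.

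Concretely, one chooses a strictly increasing sequence $\{\mathcal{N}_m\}$ with $\sum_{k\geq \mathcal{N}_m} 2^{2ks}\|\Delta_k f\|_{L^2}^2 \leq 2^{-2m}$ for each $m \in \mathbb{N}$, and defines $\omega_k = 1$ for $-1\leq k < \mathcal{N}_1$ and $\omega_k = 2^{m/2}$ for $\mathcal{N}_m \leq k < \mathcal{N}_{m+1}$. The slow growth guarantees $\omega \in AF(\tfrac{1}{2})$, because consecutive ratios $\omega_{k+1}/\omega_k$ equal either $1$ or $2^{1/2}$; and the construction clearly forces $\omega_k \rightarrow +\infty$. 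Finiteness of the weighted norm then follows from grouping by dyadic-weight blocks and using $2^m \cdot 2^{-2m} = 2^{-m}$, namely
\begin{align*}
\|f\|_{H^s(\omega)}^2 \leq \|f\|_{H^s}^2 + \sum_{m=1}^{+\infty} 2^m \sum_{k\geq \mathcal{N}_m} 2^{2ks}\|\Delta_k f\|_{L^2}^2 \leq \|f\|_{H^s}^2 + \sum_{m=1}^{+\infty} 2^{-m} < \infty.
\end{align*}
Either route yields the corollary; invoking Lemma \ref{frequency} is the cleanest and is the one I would record.
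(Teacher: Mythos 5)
Your proposal is correct and is exactly the paper's own argument: the paper introduces Corollary \ref{corollary-fre} with the remark ``Letting $f^n\equiv f$,'' i.e. it too obtains the corollary by applying Lemma \ref{frequency} to the constant sequence. The self-contained construction you sketch as an alternative is just the specialization of the proof of Lemma \ref{frequency} and is likewise consistent with the paper.
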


\subsection{Useful estimates}
In this paper, in order to control the $L^\infty_x$-norm of the fluid, we use the following lemma.
\begin{lemma}\label{unablau}
  Suppose that $s'>1/2$. Then we have
  \begin{align*}
    \|u\|_{L^\infty} \lesssim &\|\nabla u\|_{H^{s'}}.
  \end{align*}
  \end{lemma}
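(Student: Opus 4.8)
The plan is to establish the inequality $\|u\|_{L^\infty} \lesssim \|\nabla u\|_{H^{s'}}$ for $s' > 1/2$ by exploiting the embedding $B^{d/p}_{p,1} \hookrightarrow L^\infty$ from Lemma \ref{pro-bes}(4) with $p=2$ and $d=3$, together with the fact that a derivative shifts Besov regularity by one. First I would pass from the $L^\infty$ bound to a Besov bound: by Lemma \ref{pro-bes}(4), $\|u\|_{L^\infty} \lesssim \|u\|_{B^{3/2}_{2,1}}$. The target now is to control $\|u\|_{B^{3/2}_{2,1}}$ by $\|\nabla u\|_{H^{s'}} = \|\nabla u\|_{B^{s'}_{2,2}}$.

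The key observation is that $B^{3/2}_{2,1}$ sees only positive frequencies meaningfully once we use $\nabla$, so I would split according to the dyadic blocks. Since $\nabla$ maps $B^{\sigma}_{2,r}$ into... more precisely, by Bernstein's inequality (Lemma \ref{bern}), for $k \geq 0$ one has $\|\Delta_k u\|_{L^2} \lesssim 2^{-k}\|\nabla \Delta_k u\|_{L^2}$. Thus writing out the $B^{3/2}_{2,1}$ norm,
\begin{align*}
\|u\|_{B^{3/2}_{2,1}} = \sum_{k\geq -1} 2^{3k/2}\|\Delta_k u\|_{L^2} \lesssim \|\Delta_{-1}u\|_{L^2} + \sum_{k\geq 0} 2^{3k/2}2^{-k}\|\nabla\Delta_k u\|_{L^2},
\end{align*}
where the low-frequency term $\|\Delta_{-1}u\|_{L^2}$ is likewise controlled by $\|\nabla\Delta_{-1}u\|_{L^2}$ (for $k=-1$ the Bernstein reverse bound still gives $\|\Delta_{-1}u\|_{L^2}\lesssim\|\nabla\Delta_{-1}u\|_{L^2}$ since $\chi$ is supported in a ball). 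The remaining sum is $\sum_{k} 2^{k/2}\|\nabla\Delta_k u\|_{L^2}$, which I would estimate by the Cauchy–Schwarz inequality in $k$: write $2^{k/2}\|\nabla\Delta_k u\|_{L^2} = 2^{k(1/2-s')}\cdot 2^{ks'}\|\nabla\Delta_k u\|_{L^2}$ and apply Cauchy–Schwarz to get a bound by $\big(\sum_k 2^{2k(1/2-s')}\big)^{1/2}\|\nabla u\|_{B^{s'}_{2,2}}$.

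The main (and only nontrivial) point is the convergence of the geometric series $\sum_{k\geq -1} 2^{2k(1/2-s')}$, which requires exactly the hypothesis $1/2 - s' < 0$, i.e. $s' > 1/2$; this is where the regularity threshold enters and makes the constant finite. Assembling these pieces yields $\|u\|_{L^\infty} \lesssim \|u\|_{B^{3/2}_{2,1}} \lesssim \|\nabla u\|_{B^{s'}_{2,2}} = \|\nabla u\|_{H^{s'}}$, as claimed. I do not anticipate any serious obstacle here — the argument is a standard Sobolev-type embedding made sharp via the Littlewood–Paley decomposition — though one must be slightly careful to handle the $k=-1$ block separately since the second (reverse) Bernstein inequality in Lemma \ref{bern} is stated only for $k\geq 0$; for $k=-1$ one instead uses that $\chi(D)$ and $\nabla^{-1}\chi(D)$ are both bounded on $L^2$, so the low block contributes a harmless constant.
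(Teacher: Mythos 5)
Your high-frequency argument ($k\geq 0$) is correct and is essentially the paper's own: reverse Bernstein on each annulus block, then Cauchy--Schwarz in $k$ against the geometric series $\sum_{k\geq 0}2^{2k(1/2-s')}$, which converges precisely because $s'>1/2$. The genuine gap is your treatment of the block $k=-1$. The reverse Bernstein inequality $\|\Delta_k u\|_{L^2}\lesssim 2^{-k}\|\nabla\Delta_k u\|_{L^2}$ (stated in Lemma \ref{bern} only for $k\geq 0$) holds \emph{because} $\varphi$ is supported in an annulus, i.e.\ the frequency is bounded away from zero; it fails for $\Delta_{-1}=\chi(D)$, whose symbol is supported in a ball containing the origin. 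Your proposed repair is also false: $\nabla^{-1}\chi(D)$ is \emph{not} bounded on $L^2$, since its symbol behaves like $\chi(\xi)\,\xi/|\xi|^2$, which is unbounded as $\xi\to 0$. Concretely, take $\widehat{u_\epsilon}(\xi)=\epsilon^{-3/2}\psi(\xi/\epsilon)$ for a fixed bump $\psi$ and let $\epsilon\to 0$: then $\Delta_{-1}u_\epsilon=u_\epsilon$ for small $\epsilon$, $\|\Delta_{-1}u_\epsilon\|_{L^2}\thickapprox 1$, while $\|\nabla u_\epsilon\|_{H^{s'}}\thickapprox\epsilon\to 0$. Hence no inequality of the form $\|\Delta_{-1}u\|_{L^2}\lesssim\|\nabla u\|_{H^{s'}}$ can hold, and in fact your very first reduction is already fatal: the norm $\|u\|_{B^{3/2}_{2,1}}$ contains the term $\|\Delta_{-1}u\|_{L^2}$ (up to a constant), so $\|u\|_{B^{3/2}_{2,1}}$ is simply not controlled by $\|\nabla u\|_{H^{s'}}$, and the route through the embedding $B^{3/2}_{2,1}\hookrightarrow L^\infty$ cannot be completed. (Note this example does not contradict the lemma itself, since $\|u_\epsilon\|_{L^\infty}\lesssim\epsilon^{3/2}$; it only kills your intermediate step.)

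The correct fix is what the paper does: estimate the low-frequency block directly in $L^\infty$, never in $L^2$. By Young's inequality for convolutions and Bernstein, $\|\Delta_{-1}u\|_{L^\infty}\lesssim\|u\|_{L^6}$, and then the Sobolev embedding $\dot H^1(\mathbb{R}^3)\hookrightarrow L^6(\mathbb{R}^3)$ gives $\|u\|_{L^6}\lesssim\|\nabla u\|_{L^2}\leq\|\nabla u\|_{H^{s'}}$. The point is that only the $L^\infty$ size of the low block is needed, and $L^\infty$, unlike the low-frequency $L^2$ norm, scales compatibly with control by $\|\nabla u\|_{L^2}$ in dimension three (this step does implicitly use that $u$ decays at infinity, which is the setting of the paper). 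With your $k=-1$ step replaced by this argument, the remainder of your proof goes through and coincides with the paper's.
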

\begin{proof}
  By virtue of the Littlewood-Paley decomposition, we see
  \begin{align*}
    u=\Delta_{-1}u+\sum_{k\geq 0}\Delta_{k}u.
  \end{align*}
  According to Young's inequality for the convolution and the Sobolev embedding $\dot{H}^1(\mathbb{R}^3)\hookrightarrow L^6(\mathbb{R}^3)$, we get that
  \begin{align*}
   &\|\Delta_{-1}u\|_{L^\infty} \lesssim \|u\|_{L^6}\lesssim \|\nabla u\|_{L^2}
  \lesssim \|\nabla u\|_{H^{s'}}.
  \end{align*}
  Thanks to Lemma \ref{bern}, we have, for $s'>1/2,$ then
\begin{align*}
   &\sum_{k\geq 0}\|\Delta_k u\|_{L^\infty}
   \lesssim \sum_{k\geq 0}2^{k/2}\|\Delta_k\nabla u\|_{L^2}
   \lesssim \sum_{k\geq 0}2^{k(1/2-s')}2^{ks'}\|\Delta_k\nabla u\|_{L^2}
   \lesssim \|\nabla u\|_{H^{s'}}.
   \end{align*}
We thus get the desired result.
\end{proof}
We establish the following product estimates which will be  frequently used in the subsequent analysis.
\begin{lemma} \label{jb}
If $s>\frac{3}{2}$ and $s'-1\leq s\leq s'+1$, then we  have
  \begin{align*}
  &\|u\cdot\nabla u\|_{H^{s'-1}}
  \lesssim \|u\|_{H^{s'}}\|\nabla u\|_{H^{s'}},\,\,
\|j\times B\|_{H^{s'-1}} \lesssim \|j\|_{H^{s}}\|B\|_{H^{s}},\\
  &\|j\cdot \nabla j\|_{H^{s'-1}} \lesssim \|j\|_{H^{s}}\|\nabla j\|_{H^{s}},\,\,
 \|u\times B\|_{H^{s}} \lesssim \|\nabla u\|_{H^{s'}}\|B\|_{H^{s}},\\
  &\|u\cdot \nabla j\|_{H^{s}} \lesssim \|\nabla u\|_{H^{s'}}\|\nabla j\|_{H^{s}},\,\,
\|j\cdot \nabla u\|_{H^{s-1}} \lesssim \|j\|_{H^{s}}\|\nabla u\|_{H^{s'}}.
\end{align*}
Moreover, we also have \begin{align*}
&\|u\cdot \na j\|_{H^{s-1}}\lesssim \| u\|_{H^{s'}}\|\na j\|_{H^s},\,\,
  \|u\times B\|_{H^{s-1}}
  \lesssim \|u\|_{H^{s'}}\|B\|_{H^{s}},\\
  & \|u\times \partial_t B\|_{H^{s-1}}
  \lesssim \|\nabla u\|_{H^{s'}}\|\partial_t B\|_{H^{s-1}},\,\,
 \|T_{\partial_t u}B\|_{H^{s-2}}
  \lesssim \|\partial_t u\|_{H^{s'-1}}\|B\|_{H^{s}},\\
   &\|T_B {\partial_t u} \|_{H^{s'-1}}+\|R(\partial_t u,B)\|_{H^{s'-1}}
  \lesssim \|B\|_{H^{s}}\|\partial_t u\|_{H^{s'-1}},\,\,\|j\cdot \na j\|_{H^{s-1}}\lesssim \|j\|_{H^{s}}^2.
\end{align*}

  \end{lemma}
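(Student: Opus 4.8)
The plan is to prove each product estimate using the Bony decomposition \eqref{Bony} together with the paraproduct and remainder bounds of Lemma \ref{paraproduct}, exploiting throughout the Sobolev embedding $H^{s}\hookrightarrow L^\infty$ for $s>\tfrac{3}{2}$ in dimension $d=3$ (equivalently $B^{s}_{2,2}\hookrightarrow B^{3/2}_{2,1}\hookrightarrow L^\infty$ via Lemma \ref{pro-bes}(3)-(4)), and the relation $s-1\le s'\le s+1$ to match up the regularity indices. For a generic bilinear estimate $\|fg\|_{H^{\sigma}}\lesssim \|f\|_{H^{a}}\|g\|_{H^{b}}$ I would write $fg=T_fg+T_gf+R(f,g)$ and bound the two paraproducts by the first or (low-regularity) second inequality of Lemma \ref{paraproduct}, putting the low-frequency factor in $L^\infty$ (controlled by its $H^{s}$-norm when $s>\tfrac32$) and the other factor in the Besov space carrying $\sigma$; the remainder term is handled by the remainder estimate, which requires the sum of the two input regularities to be strictly positive — this is where $s>\tfrac32$ and the constraint on $s'$ guarantee enough room.

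Concretely, for the first line I would treat $\|u\cdot\nabla u\|_{H^{s'-1}}$ by taking $f=u$, $g=\nabla u$, with $T_u\nabla u$ and $R(u,\nabla u)$ bounded using $u\in L^\infty$ (from $\|u\|_{H^{s'}}$ if $s'>\tfrac32$, else absorbing a derivative) and $\nabla u\in H^{s'-1}$, while $T_{\nabla u}u$ is bounded by the $t<0$ paraproduct estimate, landing in $\|u\|_{H^{s'}}\|\nabla u\|_{H^{s'}}$; the terms $\|j\times B\|_{H^{s'-1}}$, $\|j\cdot\nabla j\|_{H^{s'-1}}$, $\|u\times B\|_{H^{s}}$, $\|u\cdot\nabla j\|_{H^{s}}$ and $\|j\cdot\nabla u\|_{H^{s-1}}$ follow the same template, each time choosing which factor sits in $L^\infty$ and which carries the derivative count so that the target index is reached. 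The estimates $\|u\times B\|_{H^{s}}\lesssim\|\nabla u\|_{H^{s'}}\|B\|_{H^{s}}$ and $\|u\cdot\nabla j\|_{H^{s}}\lesssim\|\nabla u\|_{H^{s'}}\|\nabla j\|_{H^{s}}$ are the ones where I must be careful to express the $u$-factor through $\nabla u$ (using Lemma \ref{unablau} to bound $\|u\|_{L^\infty}\lesssim\|\nabla u\|_{H^{s'}}$), since no $L^2$-norm of $u$ itself is available on the right-hand side.

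For the second block, the key new feature is the appearance of $\partial_t u$ and $\partial_t B$ with their own reduced regularity $H^{s'-1}$ and $H^{s-1}$; here I would again split via Bony, but now I must keep track of the paraproduct pieces separately (as the statement does for $\|T_{\partial_t u}B\|_{H^{s-2}}$ and $\|T_B\partial_t u\|_{H^{s'-1}}+\|R(\partial_t u,B)\|_{H^{s'-1}}$), matching each piece to the appropriate inequality in Lemma \ref{paraproduct} so that the correct combination of $\|\partial_t u\|_{H^{s'-1}}$ or $\|\partial_t B\|_{H^{s-1}}$ with $\|B\|_{H^{s}}$ or $\|\nabla u\|_{H^{s'}}$ emerges. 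The estimate $\|u\times\partial_t B\|_{H^{s-1}}\lesssim\|\nabla u\|_{H^{s'}}\|\partial_t B\|_{H^{s-1}}$ again routes $u$ through $\nabla u$, and $\|j\cdot\nabla j\|_{H^{s-1}}\lesssim\|j\|_{H^{s}}^2$ is the cleanest, following directly from the algebra property of $H^{s}$ for $s>\tfrac32$ after writing $j\cdot\nabla j$ and using $\|\nabla j\|_{H^{s-1}}\lesssim\|j\|_{H^{s}}$.

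The main obstacle I anticipate is bookkeeping rather than any single hard inequality: one must verify in each case that the remainder term's regularity sum is strictly positive and that the index arithmetic is consistent with $s-1\le s'\le s+1$, particularly in the borderline estimates landing in $H^{s}$ (such as $\|u\times B\|_{H^{s}}$ and $\|u\cdot\nabla j\|_{H^{s}}$), where the full $s$ derivatives must fall on the rough factor. A secondary subtlety is the split paraproduct/remainder statements in the second block — these are deliberately left unassembled because in the application (the control of $\varepsilon^2\partial_t j^\varepsilon$) the individual pieces need different treatment, so I would prove them term by term rather than recombining into a single product bound.
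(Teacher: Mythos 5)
Your overall toolkit (Bony decomposition, Lemma \ref{paraproduct}, Lemma \ref{unablau}, embeddings) is the same as the paper's, but your template breaks down exactly where the lemma is delicate: the estimates whose right-hand side carries $\|u\|_{H^{s'}}$ rather than $\|\nabla u\|_{H^{s'}}$, namely $\|u\cdot\nabla u\|_{H^{s'-1}}\lesssim\|u\|_{H^{s'}}\|\nabla u\|_{H^{s'}}$, $\|u\cdot\nabla j\|_{H^{s-1}}\lesssim\|u\|_{H^{s'}}\|\nabla j\|_{H^{s}}$ and $\|u\times B\|_{H^{s-1}}\lesssim\|u\|_{H^{s'}}\|B\|_{H^{s}}$. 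The hypotheses only guarantee $s'>\tfrac12$ (for instance $s=1.6$, $s'=0.6$ is admissible), so $H^{s'}\not\hookrightarrow L^\infty$ and your primary route (put the low-frequency factor $u$ in $L^\infty$ via its own Sobolev norm) is unavailable; your fallback of ``absorbing a derivative'', i.e.\ $\|u\|_{L^\infty}\lesssim\|\nabla u\|_{H^{s'}}$ from Lemma \ref{unablau}, yields $\|T_u\nabla u\|_{H^{s'-1}}\lesssim\|\nabla u\|_{H^{s'}}\|\nabla u\|_{H^{s'-1}}$, i.e.\ $\|\nabla u\|_{H^{s'}}^{2}$, which is \emph{not} the stated bound and cannot be converted into it. The difference is fatal downstream: in Lemma \ref{lemm:apriori} the nonlinearity must be controlled by $\mathcal{E}^{1/2}\mathcal{D}$ (a small energy factor times the dissipation), whereas your version produces $\mathcal{D}^{3/2}$, which the bootstrap in Theorem \ref{thm:GNSM} cannot absorb since $\mathcal{D}$ is only small after time integration, not pointwise; the same defect would invalidate the difference estimates in Lemma \ref{low-diff} (e.g.\ the bound on $R_{II_3}$), which need the $H^{s'}$-norm of $u^{\varepsilon_2}-u^{\varepsilon_1}$ itself, not of its gradient.

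There is a second, related failure in the same terms: for the remainder $R(u,\nabla u)$ your splitting ($u$ measured in $L^\infty$, hence at best in $B^{0}_{\infty,\infty}$, and $\nabla u$ in $H^{s'-1}$) has regularity sum $s'-1$, which is $\leq 0$ whenever $s'\leq 1$, so the hypothesis $s_1+s_2>0$ of Lemma \ref{paraproduct} fails in an admissible range; you correctly flag this positivity check as the main bookkeeping risk but offer no remedy. The paper resolves both points by keeping $u$ in $L^2$-based spaces throughout: for the paraproduct it invokes the second ($t<0$) estimate of Lemma \ref{paraproduct} with $t=-1$ together with the three-dimensional embedding $H^{1/2}\hookrightarrow B^{-1}_{\infty,\infty}$, giving $\|T_u\nabla u\|_{H^{s'-1}}\lesssim\|u\|_{B^{-1}_{\infty,\infty}}\|\nabla u\|_{H^{s'}}\lesssim\|u\|_{H^{s'}}\|\nabla u\|_{H^{s'}}$; and for the remainder it estimates $\|R(u,\nabla u)\|_{B^{s'+1/2}_{1,1}}\lesssim\|u\|_{H^{1/2}}\|\nabla u\|_{H^{s'}}$ (regularity sum $s'+\tfrac12>0$) and then embeds $B^{s'+1/2}_{1,1}\hookrightarrow H^{s'-1}$. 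Your treatment of the genuinely $H^{s}$-based products ($j\times B$, $j\cdot\nabla j$, $j\cdot\nabla u$, the $\partial_t$ block) and of the terms that legitimately route $u$ through Lemma \ref{unablau} ($\|u\times B\|_{H^s}$, $\|u\cdot\nabla j\|_{H^s}$, $\|u\times\partial_tB\|_{H^{s-1}}$) is sound; the gap is confined to, but unavoidable in, the ``lone $\|u\|_{H^{s'}}$'' estimates.
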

\begin{proof}
We deduce from $s>\frac{3}{2}$ and $s-1\leq s'\leq s+1$ that $s'>\frac{1}{2}$.
The proof of all these inequalities relies on Bony's decomposition \eqref{Bony},  Lemma \ref{pro-bes}, Lemma \ref{paraproduct} and Lemma \ref{unablau}. In fact, for $u\cdot\nabla u$, we have
\begin{align*}
 &\|T_u\nabla u\|_{H^{s'-1}}\lesssim \|u\|_{B^{-1}_{\infty,\infty}}\|\nabla u\|_{H^{s'}}\lesssim
  \|u\|_{H^{\frac{1}{2}}}\|\nabla u\|_{H^{s'}}\lesssim
  \|u\|_{H^{s'}}\|\nabla u\|_{H^{s'}},\\
 &\|T_{\nabla u}u\|_{H^{s'-1}}\lesssim
  \|\nabla u\|_{B^{-1}_{\infty,\infty}}\| u\|_{H^{s'}}
  \lesssim \|\na u\|_{H^{s'}}\|u\|_{H^{s'}},\\
  &\|R(u,\nabla u)\|_{H^{s'-1}}
\lesssim \|R(u,\nabla u)\|_{B^{s'+\frac{1}{2}}_{1,1}}\lesssim \|u\|_{H^{\frac{1}{2}}}\|\nabla u\|_{H^{s'}}
 \lesssim \|u\|_{H^{s'}}\|\nabla u\|_{H^{s'}}.
  \end{align*}
  This yields $\|u\cdot\nabla u\|_{H^{s'-1}}
  \lesssim \|u\|_{H^{s'}}\|\nabla u\|_{H^{s'}}.$ For $u\times B$, we have
\begin{align*}
   &\|T_u B\|_{H^{s}}+\|R(u,B)\|_{H^s}
   \lesssim \|u\|_{L^\infty}\|B\|_{H^{s}}
  \lesssim \|\nabla u\|_{H^{s'}}\|B\|_{H^{s}},\\
  &\|T_Bu\|_{H^{s}}\lesssim \|B\|_{L^\infty}\|\nabla u\|_{H^{s-1}}
  \lesssim \|B\|_{H^{s}}\|\nabla u\|_{H^{s'}},
  \end{align*}
This implies $\|u\times B\|_{H^{s}} \lesssim \|\nabla u\|_{H^{s'}}\|B\|_{H^{s}}$.
  Similar computations yield the remaining desired inequalities, and the details are omitted.
    \end{proof}
    Using Lemma \ref{paraproduct-omega} instead of Lemma \ref{paraproduct}, we can derive the following product estimates for weighted Besov spaces.
\begin{lemma} \label{jbjb} Suppose that $\omega\in AF(\delta)$.
If  $s>\frac{3}{2}$ and $s'-1\leq s\leq s'+1$, then we have \begin{align*}
  &\|u\cdot\nabla u\|_{H^{s'-1}(\omega)}
  \leq C(\delta)( \| u\|_{H^{s'}}\|\nabla u\|_{H^{s'}(\omega)}+\| u\|_{H^{s'}(\omega)}\|\nabla u\|_{H^{s'}}),\\
&\|j\times B\|_{H^{s'-1}(\omega)} \leq C(\delta)(\|j\|_{H^{s}}\|B\|_{H^{s}(\omega)}+\|j\|_{H^{s}(\omega)}\|B\|_{H^{s}}),\\
&\|j\cdot \nabla j\|_{H^{s'-1}(\omega)} \leq C(\delta) (\|j\|_{H^{s}}\|\nabla j\|_{H^{s}(\omega)}+\|j\|_{H^{s}(\omega)}\|\nabla j\|_{H^{s}}),\\
&\|u\times B\|_{H^{s}(\omega)} \leq C(\delta)( \|\nabla u\|_{H^{s'}}\|B\|_{H^{s}(\omega)}+ \|\nabla u\|_{H^{s'}(\omega)}\|B\|_{H^{s}}),\\
&\|u\cdot \nabla j\|_{H^{s}(\omega)} \leq C(\delta)( \|\nabla u\|_{H^{s'}}\|\nabla j\|_{H^{s}(\omega)}+\|\nabla u\|_{H^{s'}(\omega)}\|\nabla j\|_{H^{s}}),\\
&\|j\cdot \nabla u\|_{H^{s-1}(\omega)} \leq C(\delta)(\|j\|_{H^{s}}\|\nabla u\|_{H^{s'}(\omega)}+\|j\|_{H^{s}(\omega)}\|\nabla u\|_{H^{s'}}).
  \end{align*}
  \end{lemma}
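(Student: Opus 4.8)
The plan is to mirror the proof of Lemma \ref{jb} line by line, replacing every application of Lemma \ref{paraproduct} by its weighted counterpart Lemma \ref{paraproduct-omega}, and to read the two-term right-hand sides directly off the Bony decomposition. For a generic bilinear term I write $fg = T_f g + T_g f + R(f,g)$ and estimate the three pieces separately. The decisive structural observation is that in Lemma \ref{paraproduct-omega} the paraproduct estimate always carries the weight on the second (differentiated) argument while the first argument is measured in an \emph{unweighted} $L^\infty$ or $B^t_{\infty,\infty}$ norm, whereas the remainder estimate permits the weight on either factor. Consequently, in $fg = T_f g + T_g f + R(f,g)$ the first paraproduct produces a term with the weight on $g$ and the second a term with the weight on $f$; these are exactly the two summands on each right-hand side, and $R(f,g)$ is then absorbed into whichever of the two is convenient.

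Concretely, for $u\cdot\nabla u$ I apply the second paraproduct estimate of Lemma \ref{paraproduct-omega} with $t=-1$ to obtain $\|T_u\nabla u\|_{H^{s'-1}(\omega)}\lesssim \|u\|_{B^{-1}_{\infty,\infty}}\|\nabla u\|_{H^{s'}(\omega)}$ and $\|T_{\nabla u}u\|_{H^{s'-1}(\omega)}\lesssim \|\nabla u\|_{B^{-1}_{\infty,\infty}}\|u\|_{H^{s'}(\omega)}$, and then bound $\|u\|_{B^{-1}_{\infty,\infty}}\lesssim\|u\|_{H^{1/2}}\lesssim\|u\|_{H^{s'}}$, and likewise for $\|\nabla u\|_{B^{-1}_{\infty,\infty}}$, using $s'>1/2$ (which follows from $s>3/2$ and $s-1\le s'\le s+1$, exactly as recorded at the start of the proof of Lemma \ref{jb}). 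For the remainder I use the first remainder bound of Lemma \ref{paraproduct-omega} with the weight on $\nabla u$, routing through a $B^{s'+1/2}_{1,1}(\omega)$-space and the weighted embedding, to get $\|R(u,\nabla u)\|_{H^{s'-1}(\omega)}\lesssim \|u\|_{H^{1/2}}\|\nabla u\|_{H^{s'}(\omega)}$; the hypothesis $s_1+s_2>0$ demanded by the remainder estimate holds for this choice of exponents. The same template dispatches $j\times B$, $j\cdot\nabla j$, $u\times B$, $u\cdot\nabla j$ and $j\cdot\nabla u$: in each case the two paraproducts generate the two stated weighted summands, with the low-regularity factor controlled in $L^\infty$ either by the embedding $H^s\hookrightarrow L^\infty$ (valid since $s>3/2=d/2$) or, when it carries a gradient, by Lemma \ref{unablau}; the remainder is then estimated with the weight on whichever factor reproduces one of the two terms.

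The only delicate points are bookkeeping. First, for every remainder term I must check that the chosen exponents satisfy $s_1+s_2>0$ and that, once the remainder estimate lands the product in some $B^{\sigma}_{p,r}(\omega)$, the weighted analogues of Lemma \ref{pro-bes} carry it into the prescribed target space $H^{s'-1}(\omega)$, $H^s(\omega)$ or $H^{s-1}(\omega)$; this step consumes the constraints $s>3/2$ and $s-1\le s'\le s+1$. Second, I must keep the weight on the correct factor in each piece so that the \emph{unweighted} factor is precisely the one whose unweighted norm appears in the stated inequality. Since all the spectral-localization comparisons and the $2^{\pm c\delta}$ control of neighbouring weights are already absorbed into the constant $C(\delta)$ of Lemma \ref{paraproduct-omega}, no new harmonic-analysis input is needed; the main (and only) obstacle is to keep the index arithmetic consistent across the six inequalities, after which the computations are identical to those of Lemma \ref{jb} and I would omit the routine details.
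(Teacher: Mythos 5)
Your proposal is correct and is essentially identical to the paper's own argument: the paper proves Lemma \ref{jbjb} with the single remark that one repeats the proof of Lemma \ref{jb} with Lemma \ref{paraproduct-omega} in place of Lemma \ref{paraproduct}, which is exactly your plan. Your structural observation --- that the weighted paraproduct bounds force the weight onto the differentiated second factor while the remainder bounds allow it on either factor, so that $T_fg$, $T_gf$, and $R(f,g)$ produce precisely the two stated summands --- is the correct way to carry out that substitution, and your index bookkeeping (using $s'>\tfrac12$, $s>\tfrac32$, $s-1\le s'\le s+1$, Lemma \ref{unablau}, and the weighted embeddings) goes through in each of the six cases.
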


\section{Global existences results}

\subsection{A priori estimates}

As mentioned in the Introduction, through the transformation  $u = (u^++u^-)/2$  and $j = (u^+-u^-)/2\varepsilon$, the system \eqref{NSM} can be rewritten equivalently as system \eqref{euqNSM}. In what follows, we will directly derive estimates for \eqref{euqNSM}.

The energy functions are defined as follows:
		\begin{align*}
        & \mathcal{E}^\varepsilon(t)=\| u^\varepsilon\|_{H^{s'}}^2+\|\varepsilon j^\varepsilon\|_{H^s}^2+\|E^\varepsilon\|_{H^s}^2+\| B^{\varepsilon}\|_{H^s}^2, \\
			& \mathcal{D}^\varepsilon(t)=\mu\|\na u^\varepsilon\|_{H^{s'}}^2+\mu\|\varepsilon\nabla j^\varepsilon\|_{H^s}^2+\tfrac{1}{\sigma}\|j^\varepsilon\|_{H^s}^2,\\
			& \mathcal{E}^\varepsilon_{\omega}(t)=\| u^\varepsilon\|_{H^{s'}(\omega)}^2+\|\varepsilon j^\varepsilon\|_{H^s(\omega)}^2+\|E^\varepsilon\|_{H^s(\omega)}^2+\| B^\varepsilon\|_{H^s(\omega)}^2, \\
			& \mathcal{D}^\varepsilon_{\omega}(t)=\mu\|\na u^\varepsilon\|_{H^{s'}(\omega)}^2+\mu\|\varepsilon\nabla j^\varepsilon\|_{H^s(\omega)}^2+\tfrac{1}{\sigma}\|j^\varepsilon\|_{H^s(\omega)}^2.
		\end{align*}
We derive energy estimates for sufficiently smooth solutions.
\begin{lemma}\label{lemm:apriori}Suppose that $\omega\in AF(\delta)$. Assume that $(u^\varepsilon,j^\varepsilon,E^\varepsilon,B^\varepsilon)$ is a sufficiently smooth solution to system \eqref{euqNSM}.
	Then there exists a positive constant $C$, independent of $\varepsilon$,  such that
	  \begin{align*}
			&\tfrac{1}{2}\tfrac{\d}{\d t}\mathcal{E}^\varepsilon(t)+\mathcal{D}^\varepsilon(t)\leq C \mathcal{E}^\varepsilon(t)\fab\mathcal{D}^\varepsilon(t).\end{align*}
There also exists a positive constant $\tilde{C}$, independent of $\varepsilon$ and dependent on $\delta$, such that 
\begin{align*}
&\tfrac{1}{2}\tfrac{\d}{\d t}\mathcal{E}^\varepsilon_{\omega}(t)+\mathcal{D}^\varepsilon_{\omega}(t)\leq \tilde{C}(\delta)( \mathcal{E}^\varepsilon(t)\fab\mathcal{D}^\varepsilon_{\omega}(t)+\mathcal{D}^\varepsilon(t)\fab\mathcal{E}^\varepsilon_{\omega}(t)
\fab\mathcal{D}^\varepsilon_{\omega}(t)\fab).
\end{align*}
\end{lemma}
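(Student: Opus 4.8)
The plan is to run a frequency-localized $L^2$ energy estimate on the system \eqref{euqNSM}. I would apply $\Delta_k$ to each of the four evolution equations, take the $L^2$ inner product of the $u$-equation with $\Delta_k u$, of the $j$-equation with $\Delta_k j$, and of the two Maxwell equations with $\Delta_k E$ and $\Delta_k B$, weight the $u$-line by $2^{2ks'}$ and the other three by $2^{2ks}$ (and, for the second assertion, by the extra factor $\omega_k^2$), and sum over $k$. Three exact cancellations then produce the left-hand side $\tfrac12\tfrac{\d}{\d t}\mathcal{E}^\varepsilon+\mathcal{D}^\varepsilon$: the pressure terms $\la \Delta_k\na p,\Delta_k u\ra=\la\Delta_k\na\tilde p,\Delta_k j\ra=0$ vanish frequency-by-frequency because $\div u=\div j=0$; the Maxwell terms cancel since $\la\Delta_k\na\times B,\Delta_k E\ra=\la\Delta_k B,\Delta_k\na\times E\ra$ by self-adjointness of the curl; and the electric coupling $c\la E,j\ra_{H^s}$ from the $j$-equation is annihilated by $-c\la j,E\ra_{H^s}$ coming from the Amp\`ere law. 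The dissipative contributions of $-\mu\Delta u$, $-\varepsilon^2\mu\Delta j$ and $\tfrac1\sigma j$ reproduce exactly $\mathcal{D}^\varepsilon$. What remains is to bound the six nonlinear terms $\la u\cdot\na u,u\ra_{H^{s'}}$, $\varepsilon^2\la j\cdot\na j,u\ra_{H^{s'}}$, $\la j\times B,u\ra_{H^{s'}}$, $\varepsilon^2\la u\cdot\na j,j\ra_{H^s}$, $\varepsilon^2\la j\cdot\na u,j\ra_{H^s}$ and $\la u\times B,j\ra_{H^s}$ by $\mathcal{E}^\varepsilon(t)\fab\mathcal{D}^\varepsilon(t)$.

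The guiding principle is that every trilinear term must be arranged to carry exactly one ``energy'' factor and two ``dissipation'' factors, where the admissible dissipation factors are $\|\na u\|_{H^{s'}}$, $\|\varepsilon\na j\|_{H^s}$, $\|j\|_{H^s}$ and, crucially, $\|u\|_{L^\infty}$, which Lemma \ref{unablau} controls by $\|\na u\|_{H^{s'}}$. For the convective terms I would exploit $\div u=\div j=0$ to write $u\cdot\na u=\div(u\otimes u)$, $j\cdot\na j=\div(j\otimes j)$ and $u\cdot\na j=\div(u\otimes j)$, integrate by parts so the derivative falls on the test field, and then combine a product estimate with Lemma \ref{unablau}; for instance $|\la u\cdot\na u,u\ra_{H^{s'}}|\lesssim\|u\otimes u\|_{H^{s'}}\|\na u\|_{H^{s'}}\lesssim\|u\|_{L^\infty}\|u\|_{H^{s'}}\|\na u\|_{H^{s'}}\lesssim\|u\|_{H^{s'}}\|\na u\|_{H^{s'}}^2$, which is of the form $(\mathcal{E}^\varepsilon)\fab\mathcal{D}^\varepsilon$. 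The term $\la u\times B,j\ra_{H^s}$ is handled directly by the gradient-favouring estimate $\|u\times B\|_{H^s}\lesssim\|\na u\|_{H^{s'}}\|B\|_{H^s}$ of Lemma \ref{jb}, which forces $u$ into its dissipation norm while $j$ supplies the remaining dissipation factor $\|j\|_{H^s}$ and $B$ the single energy factor. Throughout, the powers of $\varepsilon$ must be distributed so that $\|\varepsilon j\|_{H^s}$ is read as energy and $\|\varepsilon\na j\|_{H^s}$ as dissipation, using that $j$ is \emph{itself} fully dissipative through $\tfrac1\sigma\|j\|_{H^s}^2$; as the relevant regime is $\varepsilon\le 1$, any spare power of $\varepsilon$ is simply discarded.

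The one genuinely delicate term is the Lorentz force $\la j\times B,u\ra_{H^{s'}}$ in the $u$-equation, because $B$ carries no dissipation and $u$ enters at top order rather than through its gradient. Here I would split the pairing into low and high frequencies. For $k\ge 0$ the corresponding partial sum is bounded by $\|j\times B\|_{H^{s'-1}}\|\na u\|_{H^{s'}}\lesssim\|j\|_{H^s}\|B\|_{H^s}\|\na u\|_{H^{s'}}$ via Lemma \ref{jb}, which is again $(\mathcal{E}^\varepsilon)\fab\mathcal{D}^\varepsilon$ once $\|j\|_{H^s}$ and $\|\na u\|_{H^{s'}}$ are read as dissipation and $\|B\|_{H^s}$ as energy. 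The low-frequency block $\la\Delta_{-1}(j\times B),\Delta_{-1}u\ra$ is the crux, as it cannot be controlled through $\|\na u\|_{H^{s'}}$ directly; instead I would estimate it by $\|j\times B\|_{L^1}\|u\|_{L^\infty}\lesssim\|j\|_{L^2}\|B\|_{L^2}\|\na u\|_{H^{s'}}$, invoking Lemma \ref{unablau} once more to convert $\|u\|_{L^\infty}$ into a dissipation factor. It is precisely the combination of the full dissipativity of $j$ (its low frequencies are damped, unlike those of $u$) with Lemma \ref{unablau} that recovers the clean multiplicative structure; obtaining $(\mathcal{E}^\varepsilon)\fab\mathcal{D}^\varepsilon$ rather than a weaker $(\mathcal{E}^\varepsilon)\fab\mathcal{D}^\varepsilon+\mathcal{E}^\varepsilon(\mathcal{D}^\varepsilon)\fab$ is exactly what lets the ensuing smallness bootstrap close, so this is the main obstacle to watch.

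For the weighted inequality I would repeat the computation verbatim with $\omega_k^2$ inserted in every dyadic sum. The three cancellations persist, since each is an algebraic identity at a fixed frequency $k$ and the weight only multiplies an already-vanishing summand. In the nonlinear estimates I would replace Lemma \ref{jb} by its weighted counterpart Lemma \ref{jbjb} (itself built on Lemma \ref{paraproduct-omega}), whose characteristic feature is that the weight is carried by a single factor at a time. Each weighted trilinear term then splits into a piece in which the weight sits on a dissipation factor, contributing $(\mathcal{E}^\varepsilon)\fab\mathcal{D}^\varepsilon_\omega$, and a piece in which it sits on the energy factor, contributing $(\mathcal{D}^\varepsilon)\fab(\mathcal{E}^\varepsilon_\omega)\fab(\mathcal{D}^\varepsilon_\omega)\fab$; for instance $|\la u\times B,j\ra_{H^s(\omega)}|\le\|u\times B\|_{H^s(\omega)}\|j\|_{H^s(\omega)}$, and the two alternatives of Lemma \ref{jbjb} for $\|u\times B\|_{H^s(\omega)}$ generate exactly these two contributions, with constant depending only on $\delta$ as in Lemma \ref{paraproduct-omega}. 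The additional difficulty here is essentially organizational: keeping track simultaneously of the placement of the weight and the powers of $\varepsilon$ across all six terms, including the low-frequency treatment of $\la j\times B,u\ra_{H^{s'}(\omega)}$, where on the bounded range of low frequencies $\omega_k$ is comparable to a constant depending on $\delta$ and is absorbed into $\tilde{C}(\delta)$.
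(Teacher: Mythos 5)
Your strategy coincides with the paper's own proof: the same frequency-localized energy identity with the same three cancellations, the same energy/dissipation bookkeeping of the six trilinear terms via Lemmas \ref{jb}, \ref{jbjb} and \ref{unablau}, the same $L^1$--$L^\infty$ treatment of the $k=-1$ block of $\langle j\times B,u\rangle$, and the same two-placement splitting of the weight in the $\omega$-case; replacing the paper's per-block Bernstein inequality $\|\Delta_k f\|_{L^2}\lesssim 2^{-k}\|\Delta_k\nabla f\|_{L^2}$ by an exact integration by parts on the divergence-form terms is a cosmetic difference.

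There is, however, one step that fails as written: your disposal of the low-frequency block in the weighted case. You claim that ``on the bounded range of low frequencies $\omega_k$ is comparable to a constant depending on $\delta$'', but Definition \ref{deft} imposes no upper bound on $\omega_{-1}$: the constant sequence $\omega_k\equiv M$ lies in $AF(\delta)$ for every $\delta>0$ and every $M\geq 1$. Absorbing $\omega_{-1}^2$ into the constant therefore makes $\tilde C$ depend on $\omega$ itself, not only on $\delta$, which contradicts the statement being proved; and this uniformity is not cosmetic, since in the proof of Theorem \ref{thm:GNSM} the smallness threshold $\kappa_0$ is calibrated against $\tilde C(\tfrac12)$, so an $\omega$-dependent constant would make $\kappa_0$ depend on the initial data through the weight. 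The correct repair stays entirely within your framework: use the monotonicity $\omega_{-1}\leq\omega_k$, which for $s>0$ gives
\begin{align*}
\omega_{-1}\|f\|_{L^2}\leq\Big(\sum_{k\geq-1}\omega_k^2\|\Delta_kf\|_{L^2}^2\Big)^{1/2}\leq 2^{s}\|f\|_{H^{s}(\omega)},
\end{align*}
and distribute the two copies of $\omega_{-1}$ in $\omega_{-1}^2 2^{-2s'}I_{1,-1}$ onto exactly two of the three factors (for instance onto $B$, yielding $(\mathcal{E}^\varepsilon_\omega)^{1/2}$, and onto $j$, yielding a weighted dissipation factor), which lands precisely in the admissible form $(\mathcal{D}^\varepsilon)^{1/2}(\mathcal{E}^\varepsilon_\omega)^{1/2}(\mathcal{D}^\varepsilon_\omega)^{1/2}$ with a constant depending only on $\delta$ and $s$. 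A second, smaller discrepancy: you discard ``spare powers of $\varepsilon$'' using $\varepsilon\leq1$, whereas the lemma claims a constant independent of $\varepsilon$ with no upper bound on $\varepsilon$. This is avoidable, and the paper does avoid it, by splitting each $\varepsilon^2$ exactly onto the two $j$-factors and never elsewhere: e.g.\ $\|\varepsilon^2 j\cdot\nabla j\|_{H^{s'-1}}\lesssim\|\varepsilon j\|_{H^s}\|\varepsilon\nabla j\|_{H^s}$, and $\varepsilon^2 u\cdot\nabla j$ paired against $\varepsilon j$ as the energy factor with $\|u\cdot\nabla \varepsilon j\|_{H^{s}}\lesssim\|\nabla u\|_{H^{s'}}\|\varepsilon\nabla j\|_{H^s}$, so that no loose power of $\varepsilon$ ever appears.
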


\begin{proof}
For simplicity, we drop the index $\varepsilon$ of the solution $(u^\varepsilon,\ j^\varepsilon,\ B^\varepsilon,\ E^\varepsilon)$.
Applying $\Delta_k$ to \eqref{euqNSM} yields
\begin{align}\label{localized-sys}
	  \left\{
		\begin{array}{ll}
			&\partial_t\Delta_ku-\mu\Delta \Delta_ku+\nabla \Delta_kp=\Delta_k(j\times B-u\cdot \nabla u-\varepsilon^2j\cdot \nabla j),
		\\[1ex]
			&\varepsilon^2\partial_t\Delta_kj-\varepsilon^2\mu\Delta \Delta_kj+\frac{1}{\sigma}\Delta_kj+\nabla \Delta_k\tilde{p}-c\Delta_kE=\Delta_k(u\times B-\varepsilon^2u\cdot \nabla j-\varepsilon^2j\cdot \nabla u),
		\\[1ex]
			&\frac{1}{c}\partial_t\Delta_kE-\nabla \times \Delta_kB+\Delta_kj=0,
		\\[1ex]
			&\frac{1}{c}\partial_t\Delta_kB+\nabla \times \Delta_kE=0.
		\end{array}
		\right.
	\end{align}
Taking the $L^2$-inner product of \eqref{localized-sys} with $(\Delta_k u, \Delta_k j,\Delta_k E,\Delta_k B)$, together with  $\mathrm{div} u=0$ and $\mathrm{div} j=0$, we have
\begin{align}
	  &\tfrac{1}{2}\tfrac{\d}{\d t}\| \Delta_ku\|_{L^2}^2 +\mu\|\nabla \Delta_ku  \|_{L^2}^2=I_{1,k},\label{delta u}\\
	  &\tfrac{1}{2}\tfrac{\d}{\d t}\| \varepsilon \Delta_kj\|_{L^2}^2+
 \mu\|\varepsilon\nabla \Delta_k j \|_{L^2}^2 + \tfrac{1}{\sigma}\|\Delta_k j\|_{L^2}^2 -c\langle \Delta_kE, \Delta_kj\ra_{L^2}=I_{2,k}+I_{3,k},\no\\
&\tfrac{1}{2}\tfrac{\d}{\d t}
\|\Delta_k E\|_{L^2}^2-c\la \na\times \Delta_kB, \Delta_kE\ra_{L^2}+c\langle \Delta_kj, \Delta_kE\ra_{L^2}=0,\no\\
&\tfrac{1}{2}\tfrac{\d}{\d t}
\|\Delta_k B\|_{L^2}^2+c\la \na\times \Delta_kE, \Delta_kB\ra_{L^2}=0,\label{equ-b-eney}
\end{align}
where
\begin{align*}
 &I_{1,k}=\langle \Delta_k(j\times B-u\cdot \nabla u-\varepsilon^2j\cdot \nabla j), \Delta_ku\rangle_{L^2},\\  
 &I_{2,k}=\langle \Delta_k(u\times B-\varepsilon^2u\cdot \nabla j), \Delta_kj\rangle_{L^2},\,\,I_{3,k}=\langle \Delta_k(-\varepsilon^2 j\cdot \nabla u), \Delta_kj\rangle_{L^2}.
\end{align*}
There are convenient cancellations:
 \begin{align}
	&  -\la \na\times \Delta_kB, \Delta_kE\ra_{L^2}+\la \na\times \Delta_kE, \Delta_kB\ra_{L^2}=0,\label{cancek} \\
&-\langle \Delta_kE, \Delta_kj\ra_{L^2}+\langle \Delta_kj, \Delta_kE\ra_{L^2}=0.\no
	\end{align}
Hence, by adding the last three energy equations together, we conclude that
\begin{align}\label{delta jeb}
	  \tfrac{1}{2}\tfrac{\d}{\d t}
(\| \varepsilon \Delta_k&j\|_{L^2}^2+\|\Delta_kE\|_{L^2}^2+\| \Delta_k B\|_{L^2}^2)
+
 \mu\|\varepsilon\nabla \Delta_k j \|_{L^2}^2 + \tfrac{1}{\sigma}\|\Delta_k j\|_{L^2}^2=I_{2,k}+I_{3,k}.
\end{align}
Multiplying both sides of \eqref{delta u} and \eqref{delta jeb} by $2^{2ks'}$ and $2^{2ks}$, respectively, and then adding these two inequality together and summing over $k\geq -1$, we obtain
	\begin{align}\label{us'}
	  &\tfrac{1}{2}\tfrac{\d}{\d t}\mathcal{E}(t)+\mathcal{D}(t)
	  =\sum_{k\geq -1}(2^{2ks'}I_{1,k}+2^{2ks}I_{2,k}+2^{2ks}I_{3,k}).
	\end{align}

 We split the sum of $I_{1,k}$ and $I_{3,k}$ into low and high frequency parts. More precisely, if $k=-1$, according to the proof of Lemma \ref{unablau}, $\|\Delta _{-1}f\|_{L^\infty}\lesssim\|\na f\|_{L^2},$ we  get
 \begin{align*}
 2^{-2s'}I_{1,-1}
 \lesssim &\|j\times B-u\cdot \nabla u-\varepsilon^2j\cdot \nabla j\|_{L^1}\|\Delta_{-1} u\|_{L^\infty}\\
 \lesssim &(\|j\|_{L^2}\|B\|_{L^2}+\|u\|_{L^2}\|\na u\|_{L^2}
 +\|\varepsilon j\|_{L^2}\|\varepsilon\na j\|_{L^2})\|\nabla u\|_{L^2}.
	\end{align*}
and
\begin{align*}
 2^{-2s}I_{3,-1}
 \lesssim \|-\varepsilon^2j\cdot \nabla u\|_{L^1}\|\Delta_{-1} j\|_{L^\infty}
 \lesssim \|\varepsilon j\|_{L^2}\|\nabla u\|_{L^2}\|\varepsilon \nabla j\|_{L^2}.
	\end{align*}
Whereas, if $k\geq 0$, by Lemma \ref{bern},
$
  \|\Delta_{k} f\|_{L^2}\lesssim 2^{-k}\|\Delta_k \nabla f\|_{L^2},
$
 we obtain
\begin{align*}
  \sum_{k\geq 0}2^{2ks'}I_{1,k}
 \lesssim &\sum_{k\geq 0}2^{k(s'-1)} \| \Delta_k(j\times B-u\cdot \nabla u-\varepsilon^2j\cdot \nabla j)\|_{L^2}2^{ks'}\|\Delta_k \nabla u\|_{L^2}\\
 \lesssim &(\|j\times B\|_{H^{s'-1}}+\|u\cdot \nabla u\|_{H^{s'-1}}+\|\varepsilon^2j\cdot \nabla j\|_{H^{s'-1}})\|\nabla u\|_{H^{s'}},
\end{align*}
and 
\begin{align*}
 \sum_{k\geq 0}2^{2ks}I_{3,k}
 \lesssim \sum_{k\geq 0}2^{k(s-1)} \| \Delta_k(-\varepsilon^2j\cdot \nabla u)\|_{L^2}2^{ks}\|\Delta_k \nabla j\|_{L^2}
 \lesssim
 \|\varepsilon j\cdot \nabla u\|_{H^{s-1}}\|\varepsilon \nabla j\|_{H^{s}}.
\end{align*}
For $I_{2,k}$, a direct calculation yields
\begin{align*}
\sum_{k\geq -1}2^{2ks}I_{2,k}
\lesssim &\|u\times B\|_{H^{s}}\| j\|_{H^{s}}+\|\varepsilon u\cdot \nabla j\|_{H^{s}}\| \varepsilon j\|_{H^{s}}.
	\end{align*}
Substituting the above inequalities into \eqref{us'} and using Lemma \ref{jb},  we ultimately get
 \begin{align}\label{energy}
			&\tfrac{1}{2}\tfrac{\d}{\d t}\mathcal{E}(t)+\mathcal{D}(t)\lesssim \mathcal{E}(t)\fab\mathcal{D}(t).
\end{align}

As for the weighted  case, it simply involves multiplying both sides of \eqref{delta u} and \eqref{delta jeb} by $\omega_k^22^{2ks'}$ and $\omega_k^22^{2ks}$, respectively. Then
following along the same lines as in the proof of \eqref{energy} and using Lemma \ref{jbjb}, we obtain
\begin{align*}
&\tfrac{1}{2}\tfrac{\d}{\d t}\mathcal{E}_{\omega}(t)+\mathcal{D}_{\omega}(t)\leq \tilde{C}(\delta)( \mathcal{E}(t)\fab\mathcal{D}_{\omega}(t)+\mathcal{D}(t)\fab\mathcal{E}_{\omega}(t)
\fab\mathcal{D}_{\omega}(t)\fab).
\end{align*}
This completes the whole proof of the lemma.
\end{proof}

\subsection{Global existence for small data}
\label{sub:global_existence_for_small_data}
\
\newline
\indent
{\bf Proof of Theorem \ref{thm:GNSM}.}
{\bf First Step: the initial data.}
Suppose that the initial data $u^{\varepsilon,in}\in H^{s'}, j^{\varepsilon,in},E^{\varepsilon,in},B^{\varepsilon,in}\in H^{s}.$
According to Corollary \ref{corollary-fre},  there exists a sequence  $\{\omega_k\}$  satisfying
\begin{align}\label{omega-property}
    \omega\in AF(\tfrac{1}{2}),\, \,\text{and}\,\,\lim\limits_{k\rightarrow +\infty}\omega_k=+\infty,
\end{align}
such that $u^{\varepsilon,in}\in H^{s'}(\omega)$, $j^{\varepsilon,in},E^{\varepsilon,in},B^{\varepsilon,in}\in H^{s}(\omega)$.

Denote \begin{align*}
&\mathcal{E}^{\varepsilon,in}= \|u^{\varepsilon,in}\|_{H^{s'}}^2 + \| \varepsilon j^{\varepsilon,in}\|_{H^s}^2
		  + \|E^{\varepsilon,in}\|_{H^{s}}^2
		  + \|B^{\varepsilon,in}\|_{H^{s}}^2,\\
 &\mathcal{E}^{\varepsilon,in}_{\omega}= \|u^{\varepsilon,in}\|_{H^{s'}(\omega)}^2 + \| \varepsilon j^{\varepsilon,in}\|_{H^s(\omega)}^2
		  + \|E^{\varepsilon,in}\|_{H^{s}(\omega)}^2
		  + \|B^{\varepsilon,in}\|_{H^{s}(\omega)}^2.
\end{align*}

{\bf {Second step: the Friedrichs approximation.}} We use the Friedrichs method (see, for instance, \cite{BCD2011}) to construct  the approximate solutions to \eqref{euqNSM}.
Define the cutoff mollifier $\mathcal{J}_m:$
	$$
	  \mathcal{J}_m f=\mathcal{F}^{-1}
(I_{B(0,m)}(\xi)\hat{f}(\xi)).	$$
We consider the approximate system of \eqref{euqNSM}  as follows:
	\begin{align*}
	  \left\{
		\begin{array}{ll}
			\partial_tu^m+\mathcal{J}_m(\mathcal{J}_mu^m\cdot \nabla \mathcal{J}_mu^m)+\varepsilon^2\mathcal{J}_m(\mathcal{J}_mj^m\cdot \nabla \mathcal{J}_mj^m)-\mu\Delta \mathcal{J}_mu^m\\[1ex] \hspace{6cm}=-\nabla p^m+\mathcal{J}_m(\mathcal{J}_mj^m\times \mathcal{J}_mB^m),
		\\[1ex]
			\varepsilon^2\partial_t j^m+\varepsilon^2\mathcal{J}_m(\mathcal{J}_mu^m\cdot \nabla \mathcal{J}_mj^m+\mathcal{J}_mj^m\cdot \nabla \mathcal{J}_mu^m)-\varepsilon^2\mu\Delta \mathcal{J}_mj^m+\frac{1}{\sigma}\mathcal{J}_mj^m \\[1ex] \hspace{6cm}=-\nabla \tilde{p}^m+c\mathcal{J}_mE^m+\mathcal{J}_m(\mathcal{J}_mu^m\times \mathcal{J}_mB^m),
		\\[1ex]
			\frac{1}{c}\partial_tE^m-\nabla \times \mathcal{J}_m B^m=-\mathcal{J}_mj^m,
		\\[1ex]
			\frac{1}{c}\partial_tB^m+\nabla \times \mathcal{J}_mE^m=0,
		\\[1ex]
			\div u^m=0,\ \div j^m=0,\ \div E^m=0,\ \div B^m=0,
		\end{array}
		\right.
	\end{align*}
with the initial data
\begin{align*}
	  (u^{m,in},j^{m,in},E^{m,in},B^{m,in})=
(\mathcal{J}_mu^{\varepsilon,in},\mathcal{J}_mj^{\varepsilon,in},
\mathcal{J}_mE^{\varepsilon,in},\mathcal{J}_mB^{\varepsilon,in}).
	\end{align*}

We solve the above ordinary differential equations in $(L^2)^4$.  There exists a positive  maximal time $T^m$,  such that this approximate system  admits a unique solution $(u^m, j^m,E^m,B^m)\in \big(C([0,T^m);L^2)\big)^4$.
Since $\mathcal{J}_m^2=\mathcal{J}_m,$  $\mathcal{J}_m(u^m,j^m,E^m,B^m)$ is also a solution. Then the uniqueness then implies $(u^m,j^m,E^m,B^m)=(\mathcal{J}_mu^m,\mathcal{J}_mj^m,\mathcal{J}_mE^m,
\mathcal{J}_mB^m)$. Removing $\mathcal{J}_m$ in front of $(u^m,j^m,E^m,B^m)$, the system reads
	\begin{align}\label{asym-sys}
	  \left\{
		\begin{array}{ll}
			\partial_tu^m+\mathcal{J}_m(u^m\cdot \nabla u^m)+\varepsilon^2\mathcal{J}_m(j^m\cdot \nabla j^m)-\mu\Delta u^m=-\nabla p^m+\mathcal{J}_m(j^m\times B^m),
		\\[1ex]
			\varepsilon^2\partial_t j^m+\varepsilon^2\mathcal{J}_m(u^m\cdot \nabla j^m+j^m\cdot \nabla u^m)-\varepsilon^2\mu\Delta j^m+\frac{1}{\sigma}j^m \\[1ex] \hspace{6cm}= -\nabla \tilde{p}^m+cE^m+\mathcal{J}_m(u^m\times B^m),
		\\[1ex]
			\frac{1}{c}\partial_tE^m-\nabla \times B^m=-j^m,
		\\[1ex]
			\frac{1}{c}\partial_tB^m+\nabla \times E^m=0.
\\[1ex]
			\div u^m=0,\ \div j^m=0,\ \div E^m=0,\ \div B^m=0.
		\end{array}
		\right.
	\end{align}
It is straightforward to verify that the energy estimate holds
	\begin{align*}
	  \tfrac{1}{2}\tfrac{\d}{\d t}(\|u^m\|_{L^2}^2
	 & +\|\varepsilon j^m\|_{L^2}^2
	  +\|E^m\|_{L^2}^2+\|B^m\|_{L^2}^2)\\&+
	  \mu\|\nabla u^m\|_{L^2}^2+\mu\|\varepsilon\nabla j^m\|_{L^2}^2+\tfrac{1}{\sigma}\|j^m\|_{L^2}^2=0.
	\end{align*}
This implies  the $L^2$ norm of $(u^m, j^m,E^m,B^m)$ is controlled, and consequently, we obtain $T^m=\infty$.

{\bf {Third step: uniform estimates.}}
By virtue of  the low-frequency cut-off property of $\mathcal{J}_m$, for any $s_1\in\mathbb{R}$, $(u^m,j^m,E^m,B^m)\in (C([0,+\infty);H^{s_1}))^4$.
Furthermore, again using $\mathcal{J}_m$ is an orthogonal projector $L^2$, the inequalities of Lemma \ref{lemm:apriori} still hold. Specifically,  letting $\mathcal{E}^m,\mathcal{D}^m,\mathcal{E}^m_{\omega},\mathcal{D}^m_{\omega}$ be the corresponding energy functions for $(u^m,j^m,E^m,B^m)$, We  have
 \begin{align}
			&\tfrac{1}{2}\tfrac{\d}{\d t}\mathcal{E}^m(t)+\mathcal{D}^m(t)\leq C \mathcal{E}^m(t)\fab\mathcal{D}^m(t),\label{m-ene}\end{align}
            and 
            \begin{align}\label{kuaidian}
&\tfrac{1}{2}\tfrac{\d}{\d t}\mathcal{E}^m_{\omega}(t)+\mathcal{D}^m_{\omega}(t)\leq \tilde{C}(\tfrac{1}{2})( \mathcal{E}^m(t)\fab\mathcal{D}^m_{\omega}(t)+\mathcal{D}^m(t)\fab\mathcal{E}^m_{\omega}(t)
\fab\mathcal{D}^m_{\omega}(t)\fab).
\end{align}
Here $\delta=\tfrac{1}{2}.$ We thus denote $\tilde{C}(\tfrac{1}{2})$ simply by $C$. According to Young's inequality, we have
\begin{align*}
C\mathcal{D}^m(t)\fab\mathcal{E}^m_{\omega}(t)
\fab\mathcal{D}^m_{\omega}(t)\fab\leq C\mathcal{D}^m(t)\mathcal{E}^m_{\omega}(t)
+\tfrac{1}{2}\mathcal{D}^m_{\omega}(t).
\end{align*}
Substituting  this inequality into \eqref{kuaidian}, we obtain
\begin{align}
&\tfrac{\d}{\d t}\mathcal{E}^m_{\omega}(t)+\mathcal{D}^m_{\omega}(t)\leq C( \mathcal{E}^m(t)\fab\mathcal{D}^m_{\omega}(t)+\mathcal{D}^m(t)
\mathcal{E}^m_{\omega}(t)),\label{m-omega-ene}
		\end{align}
 Based on the initial assumption, we observe that:
\begin{align*}
    \mathcal{E}^{m,in}=\|\mathcal{J}_mu^{\varepsilon,in}\|_{H^{s'}}^2 + \| \varepsilon \mathcal{J}_mj^{\varepsilon,in}\|_{H^s}^2
		  + \|\mathcal{J}_mE^{\varepsilon,in}\|_{H^{s}}^2
		  + \|\mathcal{J}_mB^{\varepsilon,in}\|_{H^{s}}^2
\leq \mathcal{E}^{\varepsilon,in}\leq \kappa_0.
\end{align*}
If $\kappa_0$ is  sufficiently small such that $ C\kappa_0 \fab\leq \frac{1}{4}$.
then  $C (\mathcal{E}^{m,in})\fab\leq \frac{1}{4}$. By the continuity of $ \mathcal{E}^m(t)$, we define
\begin{align*}
  T^*=\sup\{t;\,\, \forall\, 0\leq t'\leq t, C \mathcal{E}^m(t')\fab\leq \tfrac{1}{2}\}.
\end{align*}
If $T^*$ is finite,
 then \eqref{m-ene} yields for all $t\in[0,T^*]$
 $
			\tfrac{\d}{\d t}\mathcal{E}^m(t)\leq 0.
$
 This  leads to $C \mathcal{E}^m(T^*)\fab\leq C (\mathcal{E}^{m,in})\fab\leq \frac{1}{4}$. Again, by virtue of the continuity of $ \mathcal{E}^m(t)$, there exists a positive time $t^*$, such that for all
$ t'\in[T^*, T^*+t^*]$,  $C \mathcal{E}^m(t')\fab\leq \tfrac{1}{2}.$
This contradicts the definition of $T^*$. Hence, $T^*=\infty$. We now conclude that,
\begin{align}\label{small-solu}
  C \mathcal{E}^m(t)\fab\leq \tfrac{1}{2},\,\, {\text {for all}}\,\, t\geq 0.
  \end{align}
Plugging \eqref{small-solu} into 	\eqref{m-ene} yields
\begin{align*}
			\tfrac{\d}{\d t}\mathcal{E}^m(t)+\mathcal{D}^m(t)\leq 0.
\end{align*}
Integrating with respect to time leads us to
\begin{align}\label{uni-1}
  \sup_{t\geq 0}\mathcal{E}^m(t)\leq \mathcal{E}^{m,in}\leq \kappa_0,\,\,
  \int_0^{+\infty} \mathcal{D}^m(t)\d t \leq \mathcal{E}^{m,in}\leq \kappa_0.
\end{align}
Next, plugging \eqref{small-solu} into \eqref{m-omega-ene},  we have
\begin{align*}
&\tfrac{\d}{\d t}\mathcal{E}^m_{\omega}(t)+\tfrac{1}{2}\mathcal{D}^m_{\omega}(t)\leq C\mathcal{D}^m(t)
\mathcal{E}^m_{\omega}(t).
		\end{align*}
Using Gronwall's inequality and \eqref{uni-1}, we have
\begin{align}\label{uni-omega}
 \sup_{t\geq 0} \mathcal{E}^m_{\omega}(t)+\int_0^{+\infty} \mathcal{D}^m_{\omega}(t) \d t\leq 3e^{C\int_0^{+\infty} \mathcal{D}^m(t)\d t}\mathcal{E}^{m,in}_{\omega}\leq 3e^{C\kappa_0}\mathcal{E}^{\varepsilon,in}_{\omega},
\end{align}
where we have used
\begin{align*}
    \mathcal{E}^{m,in}_{\omega}\overset{\text{def}}{=} & \|\mathcal{J}_mu^{\varepsilon,in}\|_{H^{s'}
    (\omega)}^2 + \|\varepsilon \mathcal{J}_mj^{\varepsilon,in}\|_{H^s(\omega)}^2
		 \\& + \|\mathcal{J}_mE^{\varepsilon,in}\|_{H^{s}(\omega)}^2
		  + \|\mathcal{J}_mB^{\varepsilon,in}\|_{H^{s}(\omega)}^2
\leq  \mathcal{E}^{\varepsilon,in}_{\omega}.
\end{align*}

{\bf Fourth step: compactness and convergence.}
We return to system \eqref{asym-sys}.
 According to Lemma \ref{jb},  along with  the uniform bound \eqref{uni-1},
  it can be verified that
 $$\partial_tu^m\,\text{ is bounded in}\, L^2(\mathbb{R}^+;H^{s'-1}),\,
 \partial_t j^m\,\text{ is bounded in}\, L^\infty(\mathbb{R}^+;H^{s})+ L^2(\mathbb{R}^+;H^{s-1}),$$
 $$\partial_t E^m\,\text{ is bounded in}\, L^\infty(\mathbb{R}^+;H^{s-1})+ L^2(\mathbb{R}^+;H^{s}),\, \partial_t B^m\,\text{ is bounded in}\, L^\infty(\mathbb{R}^+;H^{s-1}).$$
 By virtual of the Aubin-Lions-Simon Theorem and the Cantor diagonal process, up to
a subsequence,  the sequence $(u^m, j^m, E^m, B^m)$ converges in $\mathcal{S}'$ to  some divergence-free function $(u^\varepsilon, j^\varepsilon, E^\varepsilon, B^\varepsilon)$ such that for any $T>0$, as $m\rightarrow \infty,$
\begin{align*}
  &u^m\rightarrow u^\varepsilon \,\,{\text{strongly in }} C([0,T];H^{s'-\iota}_{loc}),\,\, j^m\rightarrow j \,\,{\text{strongly in }} C([0,T];H^{s-\iota}_{loc}),\\
 & E^m\rightarrow E^\varepsilon \,\,{\text{strongly in }} C([0,T];H^{s-\iota}_{loc}),\,\,
  B^m\rightarrow B^\varepsilon \,\,{\text{strongly in }} C([0,T];H^{s-\iota}_{loc}),
\end{align*}
with $\iota>0$.
Then it is straightforward to pass to the limit in \eqref{asym-sys} and conclude that $(u^\varepsilon, j^\varepsilon, E^\varepsilon, B^\varepsilon)$ is indeed a solution of \eqref{euqNSM}.
According to the Fatou property (see, for example, Theorem 2.72 in \cite{BCD2011}),  for any $k\geq -1$,  we have $$\|(\Delta_ku^\varepsilon,\Delta_kj^\varepsilon,\Delta_kE^\varepsilon,\Delta_kB^\varepsilon)
\|_{L^2}\leq \liminf\limits_{m\rightarrow \infty}\|(\Delta_k u^m,\Delta_kj^m,\Delta_kE^m,\Delta_kB^m)\|_{L^2}.$$
This yields \begin{align*}
 &u^\varepsilon \in L^\infty(\mathbb{R}^+;{H}^{s'}),\,\,
 \nabla u^\varepsilon \in L^2(\mathbb{R}^+;{H}^{s'}),\,\,
	   j^\varepsilon \in L^\infty(\mathbb{R}^+;{H}^{s}),\\
 &\nabla j^\varepsilon\in L^2(\mathbb{R}^+;{H}^{s}),\,\,
	   E^\varepsilon \in L^\infty(\mathbb{R}^+;H^s),\,\,\ B^\varepsilon\in L^\infty(\mathbb{R}^+;H^s).
\end{align*}
Moreover, we deduce from the uniform bounds \eqref{uni-1} and \eqref{uni-omega} that
\begin{align}\label{uni-2}
	&\sup\limits_{t\geq 0}\mathcal{E}^\varepsilon(t)+
  \int_0^{+\infty} \mathcal{D}^\varepsilon(t)\d t \leq 2\kappa_0,\\
\label{uni-omega-ep}
& \sup\limits_{t\geq 0}\mathcal{E}^\varepsilon_{\omega}(t)+\int_0^{+\infty} \mathcal{D}^\varepsilon_{\omega}(t) \d t \leq 3e^{C\kappa_0}\mathcal{E}^{\varepsilon,in}_\omega.
\end{align}

Next, we show that $u^{\varepsilon}$ 
is continuous in time with values in $H^{s'}$. 
In fact, on the one hand,
since $\partial_tu^\varepsilon \in L^2(\mathbb{R}^+;H^{s'-1})$,  this implies $u^\varepsilon \in C(\mathbb{R}^+;H^{s'-1})$.  Therefore,  for any $k\geq -1,$ $S_ku^\varepsilon \in C(\mathbb{R}^+;H^{s'})$.
 One the other hand, the increasing property of $\{\omega_k\}$ entails that
 \begin{align}\label{w}
   \|u^\varepsilon\|_{H^{s'}_{>k}}^2=\sum_{m> k } 2^{ms'}\|\Delta_m u^\varepsilon\|_{L^2}^2
   \leq \frac{1}{\omega_k^2} \sum_{m> k }\omega_m^2 2^{ms'}\|\Delta_m u^\varepsilon\|_{L^2}^2\leq \tfrac{1} {\omega_k^2}\|u^\varepsilon\|_{H^{s'}  (\omega)}^2.\end{align}
Hence, by virtue of $\lim\limits_{k\rightarrow +\infty}\omega_k=+\infty$, together with   \eqref{uni-omega-ep}, we have
\begin{align*}
  \|u^\varepsilon-S_{k+1} u^\varepsilon\|_{L^\infty(\mathbb{R}^+;H^{s'})}\lesssim  \|u^\varepsilon\|_{L^\infty(\mathbb{R}^+;H^{s'}_{>k})}\rightarrow 0,\,{\text{as}}\, k\rightarrow +\infty.
\end{align*}
This implies $u^\varepsilon\in C(\mathbb{R}^+;H^{s'})$. Similar arguments also show that $j^\varepsilon, E^\varepsilon$, and $B^\varepsilon$ are continuous in time with values in $H^s$.

{\bf Last step: uniqueness.} The proof of uniqueness is standard. Suppose that there are two solutions with the same initial data. We consider the difference between these two solutions in a small norm. We omit the details here. The reader may refer to, for example,  \cite{GISY2014,Z2021}.
 \section{Further bounds for the low-frequency part}
   This section is devoted to showing that
$\varepsilon^2\partial_t j^\varepsilon\in L^2({\mathbb{R}^+;H^{s}_{\leq k_0}})$. It is worth mentioning that without the truncation, we do not expect $\varepsilon^2\partial_t j^\varepsilon$ can belong to $L^2(\mathbb{R}^+;H^{s})$.
\begin{lemma}\label{hf}
Assume that $(u^\varepsilon,j^\varepsilon,E^\varepsilon,B^\varepsilon)$ is the solution obtained in Theorem \ref{thm:GNSM}. Let $0< \varepsilon \leq 1$.  For any $k_0\geq -1$, there exists a positive constant $C$, independent of $\varepsilon$ and $k_0$, such that
	\begin{align*}
   \int_0^{+\infty}\|\varepsilon^2\partial_t j^\varepsilon\|_{H^{s}_{\leq k_0}}^2\d t\leq C( \|\varepsilon j^{\varepsilon,in}\|_{H^{s}}^2+\varepsilon2^{2k_0}).
 \end{align*}

\end{lemma}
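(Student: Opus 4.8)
The plan is to run an energy estimate in which the multiplier is $\varepsilon^2\partial_t j^\varepsilon$ itself, truncated to the low frequencies $k\le k_0$. Starting from the second equation of \eqref{euqNSM}, I rewrite $\varepsilon^2\partial_t j^\varepsilon = cE^\varepsilon + u^\varepsilon\times B^\varepsilon - \tfrac1\sigma j^\varepsilon + \varepsilon^2\mu\Delta j^\varepsilon - \varepsilon^2(u^\varepsilon\cdot\nabla j^\varepsilon + j^\varepsilon\cdot\nabla u^\varepsilon) - \nabla\tilde p^\varepsilon$. The three terms $cE^\varepsilon$, $u^\varepsilon\times B^\varepsilon$ and $\tfrac1\sigma j^\varepsilon$ are only $O(1)$ in the natural norms, so they cannot be estimated term by term; the gain must come from testing against $\varepsilon^2\partial_t j^\varepsilon$. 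Concretely, I apply $\Delta_k$, take the $L^2$ inner product with $\varepsilon^2\partial_t\Delta_k j^\varepsilon$, multiply by $2^{2ks}$, sum over $-1\le k\le k_0$, and integrate in time, producing $\int_0^\infty\|\varepsilon^2\partial_t j^\varepsilon\|_{H^s_{\le k_0}}^2\,\d t$ on the left.

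The benign contributions are dispatched as follows. The resistive term $\tfrac1\sigma j^\varepsilon$ pairs with $\varepsilon^2\partial_t j^\varepsilon$ into the exact time derivative $\tfrac{\varepsilon^2}{2\sigma}\tfrac{\d}{\d t}\|j^\varepsilon\|_{H^s_{\le k_0}}^2$; integrating leaves a favourable endpoint term at $t=\infty$ and the initial contribution $\tfrac{\varepsilon^2}{2\sigma}\|j^{\varepsilon,in}\|_{H^s_{\le k_0}}^2\lesssim\|\varepsilon j^{\varepsilon,in}\|_{H^s}^2$, which is exactly the first term in the claimed bound. The pressure $\nabla\tilde p^\varepsilon$ drops out since $\div j^\varepsilon=0$. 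For the genuinely $\varepsilon^2$-small terms $\varepsilon^2\mu\Delta j^\varepsilon$ and $\varepsilon^2(u^\varepsilon\cdot\nabla j^\varepsilon+j^\varepsilon\cdot\nabla u^\varepsilon)$, I use Young's inequality together with the embedding \eqref{l+} (with $l'=1$) to pass from $H^{s-1}$ to $H^s_{\le k_0}$ at the price of a factor $2^{k_0}$, and then the uniform dissipation $\int_0^\infty\mathcal D^\varepsilon\,\d t\lesssim\kappa_0$ from \eqref{uni-2} (giving $\varepsilon\nabla j^\varepsilon,\nabla u^\varepsilon,j^\varepsilon\in L^2_t$) together with Lemma \ref{jb}; these produce contributions of size $\lesssim\varepsilon 2^{2k_0}$.

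The two $O(1)$ terms are integrated by parts in time. For $u^\varepsilon\times B^\varepsilon$ the boundary terms are $O(\varepsilon 2^{k_0})$ and the remaining integral $-\varepsilon^2\int_0^\infty\langle\partial_t(u^\varepsilon\times B^\varepsilon),j^\varepsilon\rangle_{H^s_{\le k_0}}\,\d t$ is treated by expanding $\partial_t(u^\varepsilon\times B^\varepsilon)=\partial_t u^\varepsilon\times B^\varepsilon+u^\varepsilon\times\partial_t B^\varepsilon$ and invoking precisely the $\partial_t$-product estimates collected in the second half of Lemma \ref{jb}; crucially, these trade the $L^\infty_t$ fields $u^\varepsilon,B^\varepsilon$ for the $L^2_t$ quantities $\partial_t u^\varepsilon\in L^2_tH^{s'-1}$ and $\nabla u^\varepsilon\in L^2_tH^{s'}$ (with $\partial_t B^\varepsilon=-c\nabla\times E^\varepsilon\in L^\infty_tH^{s-1}$), so that together with $j^\varepsilon\in L^2_tH^s$, the prefactor $\varepsilon^2$, and \eqref{l+}, this term is also $\lesssim\varepsilon 2^{2k_0}$.

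The decisive term is $cE^\varepsilon$. Integrating by parts in time gives a boundary term $\lesssim\varepsilon$ and, after substituting the Amp\`ere law $\tfrac1c\partial_t E^\varepsilon=\nabla\times B^\varepsilon-j^\varepsilon$, the expression $-c^2\varepsilon^2\int_0^\infty\langle\nabla\times B^\varepsilon,j^\varepsilon\rangle_{H^s_{\le k_0}}\,\d t+c^2\varepsilon^2\int_0^\infty\|j^\varepsilon\|_{H^s_{\le k_0}}^2\,\d t$, whose second piece is $\lesssim\varepsilon^2$. The first piece is the main obstacle: since $B^\varepsilon$ is controlled only in $L^\infty_tH^s$ (it carries no dissipation in $\mathcal D^\varepsilon$), a crude Cauchy--Schwarz over the infinite time interval fails, and neither $u$-type trading nor further integration by parts in the conservative Maxwell block closes it. This is exactly where the dissipative structure must enter: eliminating $E^\varepsilon$ and $j^\varepsilon$ through the Ohm--Amp\`ere--Faraday relations turns the $B^\varepsilon$ equation into a damped-wave/diffusion equation carrying the effective magnetic diffusion $\tfrac1\sigma\Delta B^\varepsilon$, which I expect to yield a uniform space-time bound of the form $\int_0^\infty\|\nabla\times B^\varepsilon\|_{H^{s-1}}^2\,\d t\lesssim\kappa_0$ (the ``dissipation of the magnetic field''), the damping simultaneously forcing the ``decay of the electric field.'' Combining this magnetic dissipation with $j^\varepsilon\in L^2_tH^s$, the embedding \eqref{l+}, and the $\varepsilon^2$ prefactor then bounds the offending term by $\lesssim\varepsilon 2^{2k_0}$, and summing all contributions gives $\int_0^\infty\|\varepsilon^2\partial_t j^\varepsilon\|_{H^s_{\le k_0}}^2\,\d t\lesssim\|\varepsilon j^{\varepsilon,in}\|_{H^s}^2+\varepsilon 2^{2k_0}$. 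I anticipate that extracting this magnetic dissipation uniformly in $\varepsilon$, and hence controlling $\int_0^\infty\langle\nabla\times B^\varepsilon,j^\varepsilon\rangle_{H^s_{\le k_0}}$, is the technical heart of the argument, the remaining steps being routine given Lemma \ref{jb} and the uniform bounds \eqref{uni-2}.
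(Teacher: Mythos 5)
Your strategy coincides, step by step, with the paper's own proof: its Step 2 is exactly your energy estimate with multiplier $\varepsilon^2\partial_t\Delta_k j^\varepsilon$ summed over $k\le k_0$, including the time integrations by parts for the two $O(1)$ terms $cE^\varepsilon$ and $u^\varepsilon\times B^\varepsilon$, the substitution of Amp\`ere's law producing $-c^2\varepsilon^2\langle\nabla\times B^\varepsilon,j^\varepsilon\rangle+c^2\varepsilon^2\|j^\varepsilon\|^2$, and the Young-inequality treatment of the genuinely $\varepsilon^2$-small terms; and the mechanism you invoke for the decisive term --- magnetic dissipation extracted from the damped wave equation $\tfrac{1}{c^2}\partial_{tt}B-\Delta B+\sigma\partial_t B=-\sigma\na\times\tilde J$ obtained by eliminating $E^\varepsilon,j^\varepsilon$ through Ohm--Amp\`ere--Faraday --- is precisely the paper's Step 1.

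There is, however, a genuine gap at exactly the point you defer as the ``technical heart.'' The bound you anticipate, $\int_0^\infty\|\nabla\times B^\varepsilon\|_{H^{s-1}}^2\,\d t\lesssim\kappa_0$ (untruncated and independent of the unknown), cannot be produced by this method, because the source $\na\times\tilde J$ of the wave equation contains $\na\times(\varepsilon^2\partial_t j^\varepsilon)$ --- the very quantity the lemma is trying to control --- together with $\varepsilon^2\mu\Delta j^\varepsilon$, which the energy bound \eqref{uni-2} places only in $L^2_tH^{s-1}$, one derivative short of the $L^2_tH^{s}$ membership of $\tilde J$ required for an $H^{s-1}$-level space-time estimate of $\na B^\varepsilon$. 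What the method actually yields (the paper's estimate \eqref{EB}) is the \emph{truncated, non-closed} bound
\begin{align*}
\|\na B^\varepsilon\|_{L^2_TH^{s-1}_{\leq k_0}}\leq C\bigl(2^{k_0}+\|\varepsilon^2\partial_tj^\varepsilon\|_{L^2_TH^{s}_{\leq k_0}}\bigr),
\end{align*}
where the truncation converts the derivative loss in $\varepsilon^2\mu\Delta j^\varepsilon$ and the other $\varepsilon^2$-terms into factors of $2^{k_0}$ via \eqref{l+} (cf.\ \eqref{haofan}), while the $\varepsilon^2\partial_t j^\varepsilon$ part of the source is simply carried along. The resulting circularity is then closed back in your Step-2 estimate: inserting this bound into the offending term gives $\int_0^T\tilde R_1\,\d t\lesssim\varepsilon^2 2^{k_0}\bigl(2^{k_0}+\|\varepsilon^2\partial_t j^\varepsilon\|_{L^2_TH^s_{\le k_0}}\bigr)\|j^\varepsilon\|_{L^2_TH^s}$, and Young's inequality lets you absorb $\tfrac12\|\varepsilon^2\partial_t j^\varepsilon\|^2_{L^2_TH^s_{\le k_0}}$ into the left-hand side. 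So the argument is rescued not by a clean uniform magnetic-dissipation bound but by a bootstrap --- truncated dissipation estimate with the unknown kept on the right, followed by absorption --- and without this modification your proof does not close.
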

\begin{proof}
For simplicity, we drop the index $\varepsilon$ of the solution $(u^\varepsilon,\ j^\varepsilon,\ B^\varepsilon,\ E^\varepsilon)$. \\
{\bf {Step 1. the
decay and dissipative properties of the electromagnetic field.}}
Substituting \eqref{euqNSM}-(2) in \eqref{euqNSM}-(3), we get
\begin{align}\label{e-damp}
\tfrac{1}{c}\partial_tE-\nabla \times B+\sigma cE=\sigma\nabla \tilde{p}+\sigma \tilde{J},
\end{align}
with $\tilde{J}=\varepsilon^2\partial_tj-\varepsilon^2\mu\Delta j-u\times B+\varepsilon^2u\cdot \nabla j+\varepsilon^2j\cdot \nabla u.$ There is a decay term $\sigma cE$.
Next, the fact that $B$ is divergence free yields
	$$
	  \na\times (\na\times B)=-\Delta B.
	$$
Hence,  we derive from \eqref{euqNSM}-(4) and \eqref{e-damp} that
	\begin{align}\label{ttb}
			 \tfrac{1}{c^2}\partial_{tt} B-\Delta B+\sigma  \partial_t B=-\sigma \na\times \tilde{J},
	\end{align}
which is a nonlinear wave equation with a damping term $\sigma\partial_t B$.

We apply $\Delta_k$ to \eqref{e-damp}, and take the $L^2$-inner product with $\Delta_k E$.  Using \eqref{equ-b-eney} and \eqref{cancek}, we have
\begin{align*}
-\la \na\times \Delta_kB, \Delta_kE\ra_{L^2}
=-\la \na\times \Delta_kE, \Delta_kB\ra_{L^2}= \tfrac{1}{2c}\tfrac{\d}{\d t}\|\Delta_k B\|_{L^2}^2.
\end{align*}
 Together with the fact 
that $\mathrm{\div} E=0$,  
we conclude that \begin{align*}
	  \tfrac{1}{2c}\tfrac{\d}{\d t}(\|\Delta_k E\|_{L^2}^2+\|\Delta_k B\|_{L^2}^2)
	  +\sigma c\|\Delta_k E\|_{L^2}^2 =\sigma\la \Delta_k \tilde{J},\Delta_k E\ra_{L^2}.
	\end{align*}
Using H\"{o}lder's inequality and Young's inequality, we have 
\begin{align*}
   \sigma\la \Delta_k \tilde{J},\Delta_k E\ra_{L^2}\leq \frac{\sigma c}{2}\|\Delta_k E\|_{L^2}^2 +\frac{\sigma}{2c}\| \Delta_k \tilde{J}\|_{L^2}^2.
\end{align*}
 This leads to \begin{align}\label{further-1}
	  &\tfrac{1}{c}\tfrac{\d}{\d t}(\|\Delta_k E\|_{L^2}^2+\|\Delta_k B\|_{L^2}^2)
	  +\sigma c\|\Delta_k E\|_{L^2}^2 \leq\frac{\sigma}{c}\| \Delta_k \tilde{J}\|_{L^2}^2.
\end{align}
Similarly, applying $\Delta_k$ to \eqref{ttb}, taking the $L^2$-inner product with $\Delta_k\partial_t B$, and  using
	\begin{align*}
	&-\sigma \la\na\times \Delta_k \tilde{J},\Delta_k\partial_t B\ra_{L^2}\leq \tfrac{\sigma}{2} \|\na \Delta_k \tilde{J}\|_{L^2}^2+\tfrac{\sigma}{2}\|\Delta_k\partial_t B\|_{L^2}^2,
	\end{align*}
we get
\begin{align}
	 &\tfrac{\d}{\d t}(\tfrac{1}{c^2}\|\Delta_k\partial_t B\|_{L^2}^2+\|\Delta_k\na B\|_{L^2}^2)
	+\sigma \|\Delta_k\partial_t B\|_{L^2}^2\leq \sigma\|\na \Delta_k \tilde{J}\|_{L^2}^2.\label{further-2}
	\end{align}
Next, we take the $L^2$-inner product with $\Delta_k B$ in the equation \eqref{ttb}. Performing integration by parts yields
\begin{align*}
 \la \Delta_k \partial_{tt} B, \Delta_k B\ra=\tfrac{\d}{\d t}\la \Delta_k \partial_t B, \Delta_k B\ra -\|\Delta_k\partial_t B\|_{L^2}^2.
\end{align*}
Again using \eqref{cancek}, together with H\"{o}lder's inequality and Young's inequality, we have
	\begin{align*}
	& -\sigma \la\na\times \Delta_k \tilde{J},\Delta_k B\ra_{L^2}=  -\sigma \la\Delta_k \tilde{J},\na\times \Delta_k B\ra_{L^2}
\leq  \tfrac{\sigma^2}{2} \|\Delta_k \tilde{J}\|_{L^2}^2+\tfrac{1}{2}\|\na \Delta_k B\|_{L^2}^2.
	\end{align*}
we finally obtain that
\begin{align}
		\tfrac{\d}{\d t}(\tfrac{2}{c^2}\la \Delta_k \partial_t B, \Delta_k B\ra+\sigma
	  \|\Delta_k B\|_{L^2}^2)-\tfrac{2}{c^2}\|\Delta_k\partial_t B\|_{L^2}^2+\|\Delta_k\na B\|_{L^2}^2\leq\sigma^2\| \Delta_k \tilde{J}\|_{L^2}^2\label{further-3}.
	\end{align}
    
We then multiply \eqref{further-1}, \eqref{further-2} and \eqref{further-3}  by $2^{2ks}$, $2^{2k(s-1)}$ and  $\eta_*2^{2k(s-1)}$  respectively,  sum over $k\leq k_0$, and add these three inequalities together. $\eta_*>0$ is a  sufficiently  small constant to be specified later. We obtain
\begin{align}\label{eb}
\tfrac{\d}{\d t}F(t)+G(t)\leq R(t).
\end{align}
Here $F$ is the energy function
\begin{align*}
F(t)=\tfrac{1}{c}\| E\|_{H^{s}_{\leq k_0}}^2&+\tfrac{1}{c}\| B\|_{H^{s}_{\leq k_0}}^2+\tfrac{1}{c^2}\|\partial_t B\|_{H^{s-1}_{\leq k_0}}^2+\|\na B\|_{H^{s-1}_{\leq k_0}}^2
 \\&+\sum_{k\leq k_0}\tfrac{2\eta_*}{c^2}2^{2k(s-1)}\la \Delta_k \partial_t B, \Delta_k B\ra+\eta_*\sigma
	  \|B\|_{H^{s-1}_{\leq k_0}}^2,
\end{align*}
$G$ is the dissipative function
$$G(t)=\sigma c\| E\|_{H^{s}_{\leq k_0}}^2
	+(\sigma-\tfrac{2\eta_*}{c^2}) \|\partial_t B\|_{H^{s-1}_{\leq k_0}}^2+\eta_*\|\na B\|_{H^{s-1}_{\leq k_0}}^2,$$
 and   the right-hand side $R(t)$ is expressed by:
\begin{align*}
R(t)=& \frac{\sigma}{c}
\| \tilde{J}\|_{H^{s}_{\leq k_0}}^2+
\sigma\|\na  \tilde{J}\|_{H^{s-1}_{\leq k_0}}^2
+\eta_*\sigma^2\|\tilde{J}\|_{H^{s-1}_{\leq k_0}}^2 .
\end{align*}

For $F$, using H\"{o}lder's inequality and Young's inequality, we obtain
\begin{align*}
 |\sum_{k\leq k_0}\tfrac{2\eta_*}{c^2}2^{2k(s-1)}\la \Delta_k \partial_t B, \Delta_k B\ra|\leq \tfrac{2\eta_*}{\sigma c^4}\|\partial_t B\|_{H^{s-1}_{\leq k_0}}^2+\tfrac{\eta_*\sigma}{2}\|B\|_{H^{s-1}_{\leq k_0}}^2.
\end{align*}
We choose $\eta_*$ sufficiently small such that
$\tfrac{2\eta_*}{\sigma c^4}\leq \tfrac{1}{2c^2}$. Then  $F$ is positive. Furthermore,  by virtue of \eqref{euqNSM}-(4), we observe that
\begin{align}\label{F-new}
F(t)\thickapprox
\| E\|_{H^{s}_{\leq k_0}}^2+\| B\|_{H^{s}_{\leq k_0}}^2.
\end{align}
For $G$, if
$\eta_*$ is chosen sufficiently small such that $(\sigma-\tfrac{2\eta_*}{c^2})>0$, using \eqref{euqNSM}-(4) again, we have
\begin{align}\label{G-new}
G(t)\thickapprox \| E\|_{H^{s}_{\leq k_0}}^2
	+\|\na B\|_{H^{s-1}_{\leq k_0}}^2.
    \end{align}
Finally, direct calculation gives 
\begin{align}\label{R-new}
R(t)\lesssim \|\tilde{J}\|_{H^{s}_{\leq k_0}}^2 .
  \end{align}
  
  Returning to \eqref{eb}, by performing time integration, we obtain that for any $T>0$,
 \begin{align}\label{FG}
   \int_0^{T} G(t)\d t\leq F(0)-F(T)+\int_0^{T} R(t)\d t\leq F(0)+\int_0^{T} R(t)\d t.
 \end{align}
 Substituting \eqref{F-new}-\eqref{R-new} into \eqref{FG},  we get that, for any $T>0$,  \begin{align}\label{ebebnew}
   \int_0^{T} (\| E\|_{H^{s}_{\leq k_0}}^2
	+\|\na B\|_{H^{s-1}_{\leq k_0}}^2)\d t\lesssim \| E^{in}\|_{H^{s}_{\leq k_0}}^2+\| B^{in}\|_{H^{s}_{\leq k_0}}^2 +\int_0^{T}\|\tilde{J}\|_{H^{s}_{\leq k_0}}^2 \d t.
 \end{align}
Recall the expression of $\tilde{J}$:
$$\tilde{J}=\varepsilon^2\partial_tj-\varepsilon^2\mu\Delta j-u\times B+\varepsilon^2u\cdot \nabla j+\varepsilon^2j\cdot \nabla u.$$
Using \eqref{ffk0-new} and \eqref{l+}, it is straightforward to see that
\begin{align}\label{haofan}
\|\varepsilon^2\mu\Delta& j \|_{H^{s}_{\leq k_0}}^2
 \lesssim \varepsilon^2\|\varepsilon\na j\|_{H^{s+1}_{\leq k_0}}^2
 \lesssim \varepsilon^22^{2k_0}\|\varepsilon\na j\|_{H^{s}}^2.
\end{align}
Using \eqref{ffk0-new} and \eqref{l+} again, together with  Lemma \ref{jb}, we have
\begin{align*}
 \|u\times B\|_{H^{s}_{\leq k_0}}^2\lesssim
  \|u\times B\|_{H^{s}}^2\lesssim  \|\nabla u\|_{H^{s'}}^2\|B\|_{H^{s}}^2,
  \end{align*}
\begin{align}\label{e2uj-new}
 \|\varepsilon^2u\cdot \nabla j\|_{H^{s}_{\leq k_0}}^2\lesssim  \varepsilon^22^{2k_0}
   \| u\cdot \nabla \varepsilon j\|_{H^{s-1}}^2
 \lesssim  \varepsilon^22^{2k_0}\| u\|_{H^{s'}}^2\|\varepsilon \na j\|_{H^{s}}^2,
\end{align}
and
\begin{align}\label{e2uj}
\|\varepsilon^2 j\cdot \nabla u\|_{H^{s}_{\leq k_0}}^2\lesssim  \varepsilon^22^{2k_0}
  \|\varepsilon j\cdot \nabla u\|_{H^{s-1}}^2
 \lesssim \varepsilon^22^{2k_0}\|\nabla u\|_{H^{s'}}^2\|\varepsilon j\|_{H^{s}}^2.
\end{align}
By virtue of the energy estimate \eqref{uni-2},
we we deduce that 
\begin{align*}
\int_0^T  (\|\varepsilon^2\mu\Delta j \|_{H^{s}_{\leq k_0}}^2+\|u\times B\|_{H^{s}_{\leq k_0}}^2+\|\varepsilon^2u\cdot \nabla j\|_{H^{s}_{\leq k_0}}^2+ \|\varepsilon^2 &j\cdot \nabla u\|_{H^{s}_{\leq k_0}}^2)\d t\\
\lesssim &
(\kappa_0+\kappa_0^2)(1+\varepsilon^22^{2k_0}).
\end{align*}
Plugging the above inequality into  \eqref{ebebnew} gives \begin{align*}
   \int_0^{T} (\| E\|_{H^{s}_{\leq k_0}}^2
	+\|\na B\|_{H^{s-1}_{\leq k_0}}^2)\d t\lesssim &(\kappa_0+\kappa_0^2)(1+\varepsilon^22^{2k_0})+\int_0^{T}\|\varepsilon^2\partial_tj\|_{H^{s}_{\leq k_0}}^2 \d t.
 \end{align*}
 Since $0<\varepsilon\leq 1$ and $k_0\geq -1$,  we have $(1+\varepsilon^22^{2k_0})\lesssim 2^{2k_0}$.
 We ultimately obtian that
\begin{align}\label{EB}
   \|\na B\|_{L^2_TH^{s-1}_{\leq k_0}}\leq C(2^{k_0}+\|\varepsilon^2\partial_tj\|_{L^2_TH^{s}_{\leq k_0}}).
 \end{align}
 We mention that $C$ is independent of $T$. 
 
{\bf {Step 2: estimates for $\varepsilon^2\partial_tj$.}}
Applying $\Delta_k$ to system \eqref{euqNSM}-(2) and taking the $L^2$-inner product with $ \varepsilon^2 \Delta_k\partial_t j$, we have
\begin{align}\label{further}
\tfrac{1}{2}\tfrac{\d}{\d t}(\mu\varepsilon^2\|\Delta_k\varepsilon\na j\|_{L^2}^2
&+\tfrac{1}{\sigma}\|\Delta_k\varepsilon j\|_{L^2}^2)+
\|\Delta_k\varepsilon^2\partial_t j\|_{L^2}^2\no\\&=
 \langle\Delta_k (cE+u\times B-\varepsilon^2u\cdot \nabla j-\varepsilon^2j\cdot \nabla u), \Delta_k\varepsilon^2\partial_t j\rangle_{L^2}.
\end{align}
Performing  integration by parts gives
\begin{align*}
  \langle \Delta_kE, \Delta_k\varepsilon^2\partial_t j\ra_{L^2}= & \varepsilon^2
 \tfrac{\d}{\d t}\langle \Delta_kE, \Delta_k  j\ra_{L^2}-\varepsilon^2\langle \Delta_k\partial_tE, \Delta_k  j\ra_{L^2}.
 \end{align*}
 Using \eqref{euqNSM}-(3), we see that 
 \begin{align*}
 \langle \Delta_k\partial_tE, \Delta_k  j\ra_{L^2} 
 =c\langle \Delta_k \nabla\times B, \Delta_k
  j\ra_{L^2}
  -c\|\Delta_k j\|_{L^2}^2.
 \end{align*}
This leads to
 \begin{align*}
  \langle \Delta_kE, \Delta_k\varepsilon^2\partial_t j\ra_{L^2}= \varepsilon 
 \tfrac{\d}{\d t}\langle \Delta_kE, \Delta_k  \varepsilon j\ra_{L^2}-c\varepsilon^2\langle \Delta_k \nabla\times B, \Delta_k
  j\ra_{L^2}
+c\varepsilon^2\|\Delta_k j\|_{L^2}^2.
 \end{align*}
Similarly, performing  integration by parts, we have
\begin{align*}
  \langle \Delta_k(u\times B), \Delta_k\varepsilon^2\partial_t j\ra_{L^2}= & \varepsilon
 \tfrac{\d}{\d t}\langle \Delta_k( u\times B), \Delta_k \varepsilon j\ra_{L^2}\\&-\varepsilon^2\langle \Delta_k(\partial_tu \times B), \Delta_k j\ra_{L^2}
 -\varepsilon^2\langle \Delta_k(u \times \partial_t B), \Delta_k j\ra_{L^2}.\end{align*}
The above process ensures that $\varepsilon$ appears in the coefficients. Finally, using H\"{o}lder's inequality and Young's inequality, we obtain
\begin{align*}
   -\langle\Delta_k \varepsilon^2(u\cdot \nabla j), \Delta_k\varepsilon^2\partial_t j\rangle_{L^2}\leq
   \varepsilon^2\|\Delta_k (u\cdot \nabla \varepsilon j)\|_{L^2}^2+\tfrac{1}{4}\|\Delta_k\varepsilon^2\partial_t j\|_{L^2}^2,
 \end{align*}
 \begin{align*}
   -\langle\Delta_k \varepsilon^2(j\cdot \nabla u), \Delta_k\varepsilon^2\partial_t j\rangle_{L^2}\leq
\varepsilon^2\|\Delta_k (\varepsilon j\cdot \nabla u)\|_{L^2}^2+\tfrac{1}{4}\|\Delta_k\varepsilon^2\partial_t j\|_{L^2}^2.
 \end{align*}
 Plugging the above estimates into \eqref{further}, we get
\begin{align}\label{e-nab}
\tfrac{\d}{\d t}(\mu\varepsilon^2\|\Delta_k\varepsilon\na j\|_{L^2}^2
&+\tfrac{1}{\sigma}\|\Delta_k\varepsilon j\|_{L^2}^2
-2\varepsilon\langle \Delta_k cE, \Delta_k \varepsilon j\ra_{L^2}\no\\&-2\varepsilon\langle \Delta_k(u\times B), \Delta_k \varepsilon j\ra_{L^2})+
\|\Delta_k\varepsilon^2\partial_t j\|_{L^2}^2\leq \sum_{i=1}^6{\tilde I}_i,
\end{align}
where 
\begin{align*}
\tilde{I}_1=-2c^2\varepsilon^2\langle \Delta_k\nabla\times B,\Delta_k j\ra_{L^2}, ~
\tilde{I}_2  =2c^2\varepsilon^2\|\Delta_k j\|_{L^2}^2, ~\tilde{I}_3=-2\varepsilon^2\langle \Delta_k(\partial_tu \times B), \Delta_k j\ra_{L^2},\end{align*}
\begin{align*}
 \tilde{I}_4= -2\varepsilon^2\langle \Delta_k(u \times \partial_t B), \Delta_k j\ra_{L^2}, ~\tilde{I}_5 =2\varepsilon^2\|\Delta_k(u\cdot \nabla \varepsilon j
 )\|_{L^2}^2, ~
\tilde{I}_6=2\varepsilon^2\|\Delta_k(\varepsilon j\cdot \nabla u)\|_{L^2}^2.
\end{align*}

Multiplying both sides of \eqref{e-nab} by $2^{2ks}$ and  summing over $k\leq k_0$, we have
\begin{align*}
\tfrac{\d}{\d t}\tilde{F}(t)+
\|\varepsilon^2\partial_t j\|_{H^{s}_{\leq k_0}}^2\leq \tilde{R}(t),
\end{align*}
where
\begin{align*}
  \tilde{F}(t)=\mu\varepsilon^2\|\varepsilon\na j\|_{H^{s}_{\leq k_0}}^2
+\tfrac{1}{\sigma}\|\varepsilon j\|_{H^{s}_{\leq k_0}}^2
&-2\varepsilon\sum_{k\leq  k_0}2^{2ks}\langle \Delta_k cE, \Delta_k \varepsilon j\ra_{L^2}\\&-2\varepsilon\sum_{k\leq  k_0}2^{2ks}\langle \Delta_k(u\times B), \Delta_k \varepsilon j\ra_{L^2}, \end{align*}
and \begin{align*}
R(t)=\sum_{i=1}^6(\sum_{k\leq  k_0}2^{2ks}  {\tilde I}_i)\stackrel{\triangle}{=}\sum_{i=1}^6\tilde{R}_i(t).
\end{align*}
 Integrating in time, we have, for any $T>0$,
\begin{align}\label{e2j0}
  \int_0^{T}\|\varepsilon^2\partial_t j\|_{H^{s}_{\leq k_0}}^2\d t
 \leq (\tilde{F}(0)-\tilde{F}(T))+\int_0^{T}\tilde{R}(t)\d t.
\end{align}

For $F$, direct calculation gives
 \begin{align}\label{tiledF}
   \tilde{F}(0)-\tilde{F}(T)
    \lesssim &\varepsilon^2
    \|\varepsilon j^{in}\|_{H^{s+1}_{\leq k_0}}^2+ \|\varepsilon j^{in}\|_{H^{s}_{\leq k_0}}^2\no\\&
    +\varepsilon\sup_{t\geq 0}(\|E\|_{H^s_{\leq k_0}}\|\varepsilon j\|_{H^s_{\leq k_0}}+
   \|u\times B\|_{H^{s}_{\leq k_0}}\|\varepsilon j\|_{H^s_{\leq k_0}}).
   \end{align}
   By virtue of \eqref{ffk0-new}, \eqref{l+} and  Lemma ~\ref{jb},  we have
\begin{align*}
 & \varepsilon^2\|\varepsilon j^{in}\|_{H^{s+1}_{\leq k_0}}^2\lesssim \varepsilon^22^{2k_0}\|\varepsilon j^{in}\|_{H^s}^2,\\
 &\|u\times B\|_{H^{s}_{\leq k_0}}\lesssim 2^{k_0}\|u\times B\|_{H^{s-1}}
\lesssim 2^{k_0}\|u\|_{H^{s'}}\|B\|_{H^{s}}.
\end{align*} 
  Using  \eqref{ffk0-new} again, we deduce  from \eqref{tiledF} that:
 \begin{align}\label{Ftilde-new}
   \tilde{F}(0)-\tilde{F}(T)
    \lesssim
    &\varepsilon^22^{2k_0}\|\varepsilon j^{in}\|_{H^s}^2+ \|\varepsilon j^{in}\|_{H^{s}_{\leq k_0}}^2\no\\&
    +\varepsilon\sup_{t\geq 0}(\|E\|_{H^s}\|\varepsilon j\|_{H^s}+
2^{k_0}\|u\|_{H^{s'}}\|B\|_{H^{s}}\|\varepsilon j\|_{H^s}).\end{align}
Note that  $\varepsilon$ appears in the first  and third terms on the right-hand side of \eqref{Ftilde-new}. Then, using  the energy bound \eqref{uni-2}, along with 
$0<\varepsilon\leq 1$ and $k_0\geq -1$, we have 
\begin{align*}
\varepsilon^22^{2k_0}\|\varepsilon j^{in}\|_{H^s}^2
\lesssim \varepsilon^22^{2k_0} \kappa_0 \lesssim \varepsilon2^{2k_0},
\end{align*}
and 
\begin{align*}
\varepsilon\sup_{t\geq 0}(\|E\|_{H^s}\|\varepsilon j\|_{H^s}+
2^{k_0}\|u\|_{H^{s'}}\|B\|_{H^{s}}\|\varepsilon j\|_{H^s})\lesssim \varepsilon (\kappa_0+2^{k_0}\kappa_0^{\tfrac{3}{2}})\lesssim \varepsilon 2^{2k_0}.
\end{align*}
Plugging the above two inequalities into \eqref{Ftilde-new}, finally, we have 
 \begin{align}\label{Ftilde}
   \tilde{F}(0)-\tilde{F}(T)
    \lesssim &\varepsilon2^{2k_0}+\|\varepsilon j^{in}\|_{H^{s}}^2.
    \end{align}
    
We now treat $\tilde{R}$. We will repeatedly use \eqref{ffk0-new} and \eqref{l+}. For $\tilde{R}_1$, direct calculation gives
\begin{align*}
  &{\tilde R}_1\lesssim  \varepsilon^2\|\nabla B\|_{H^{s-1}_{\leq k_0}}\| j\|_{H^{s+1}_{\leq k_0}}
   \lesssim \varepsilon^22^{k_0}\|\nabla B\|_{H^{s-1}_{\leq k_0}}\| j\|_{H^{s}}.\end{align*}
  This leads to 
   \begin{align*}
  \int_0^T {\tilde R}_1\d t\lesssim \varepsilon^22^{k_0}\|\nabla B\|_{L^2_TH^{s-1}_{\leq k_0}}\| j\|_{L^2_TH^{s}}.
  \end{align*}
By \eqref{EB} obtained in Step 1 and by applying Young's inequality, we infer that
\begin{align*}
  \int_0^T {\tilde R}_1\d t\lesssim &
  \varepsilon^22^{k_0}(2^ {k_0}+\|\varepsilon^2\partial_tj\|_{L^2_TH^{s}_{\leq k_0}})\| j\|_{L^2_TH^{s}} \\
  \leq &C
  \varepsilon^22^{2k_0}\| j\|_{L^2_TH^{s}}+C\varepsilon^42^{2k_0}\| j\|_{L^2_TH^{s}}^2+\tfrac{1}{2}\|\varepsilon^2\partial_tj\|_{L^2_TH^{s}_{\leq k_0}}^2.
 \end{align*}     
Next, it is straightforward to see 
${\tilde R}_2\lesssim \varepsilon^2\| j\|_{H^{s}}^2.$ As for $\tilde{R}_3$,  we have 
\begin{align*}
 & {\tilde R}_3\lesssim  
   \varepsilon^2\|\partial_tu \times  B\|_{H^{s}_{\leq k_0}}\|j\|_{H^{s}_{\leq k_0}}.
   \end{align*}
According to Bony's decomposition \eqref{Bony}, along with Lemma \ref{jb} and the condition $s-1\leq s'\leq s+1$, we get
\begin{align*}
  \|\partial_tu \times  B\|_{H^{s}_{\leq k_0}}
\lesssim & 2^{2k_0}
 \|T_{\partial_tu} B\|_{H^{s-2}}+2^{k_0(s-s'+1)}\|T_B{\partial_tu}+R(\partial_t u,B)\|_{H^{s’-1}}\\
  \lesssim &2^{2k_0}
 \|\partial_tu\|_{H^{s'-1}} \|B\|_{H^{s}}.
  \end{align*}
This implies that
\begin{align*}
  {\tilde R}_3
  \lesssim &\varepsilon^22^{2k_0}
 \|\partial_tu\|_{H^{s'-1}} \|B\|_{H^{s}}\|j\|_{H^{s}}.
  \end{align*}
  Similarly, using Lemma \ref{jb}  and \eqref{euqNSM}-(4), we have
\begin{align*}
  {\tilde R}_4\lesssim &
  \varepsilon^2\|u \times \partial_t B\|_{H^{s-1}_{\leq k_0}}\|j\|_{H^{s+1}_{\leq k_0}}\\
  \lesssim &\varepsilon^22^{k_0}\|\nabla u\|_{H^{s'}}\|\partial_t B\|_{H^{s-1}}\|j\|_{H^{s}}
   \lesssim \varepsilon^22^{k_0}\|\nabla u\|_{H^{s'}}\|E\|_{H^{s}}\|j\|_{H^{s}}.
\end{align*}
Finally, thanks to \eqref{e2uj-new} and \eqref{e2uj}, we have \begin{align*}
 {\tilde R}_5+ {\tilde R}_6\lesssim &\varepsilon^2(\|u\cdot \nabla \varepsilon j\|_{H^{s}_{\leq k_0}}^2+\|\varepsilon j\cdot \nabla u\|_{H^{s}_{\leq k_0}}^2)\\
 \lesssim & \varepsilon^22^{2k_0}(\| u\|_{H^{s'}}^2\|\varepsilon \na j\|_{H^{s}}^2+\|\nabla u\|_{H^{s'}}^2\|\varepsilon j\|_{H^{s}}^2).
\end{align*}
Then, we deduce from  \eqref{uni-2} (this also yields $\partial_tu \in L^2(\mathbb{R}^+;H^{s'-1})$ and $\|\partial_tu\|_{L^2(\mathbb{R}^+;H^{s'-1})}\leq C(\kappa_0))$, $0<\varepsilon\leq 1$ and $k_0\geq -1$ that
 \begin{align*}
   \int_0^{T}\tilde{R}(t)\d t\leq C \varepsilon^22^{2k_0}+\tfrac{1}{2}\|\varepsilon\partial_tj\|_{L^2_TH^{s}_{\leq k_0}}^2.
 \end{align*}
Substituting the above estimate and \eqref{Ftilde}  into \eqref{e2j0}, we get
\begin{align*}
  \int_0^{T}\|\varepsilon^2\partial_t j\|_{H^{s}_{\leq k_0}}^2\d t
 \leq C(\varepsilon2^{2k_0}+\|\varepsilon j^{in}\|_{H^{s}}^2)
\end{align*} holds true for any $T>0$. We mention that $C$ is independent of $T$.  Let $T\rightarrow +\infty$ then completes the proof of the lemma.\end{proof}

\section{Strong convergence results}
\subsection{Estimates for the difference between two solutions }
Let $\varepsilon_1$ and $\varepsilon_2$ be two momentum transfer coefficients,  $0< \varepsilon_1,\varepsilon_2\leq 1$.  Suppose that the initial date satisfy
 \begin{align*}	  \|u^{\varepsilon_i,in}\|_{H^{s'}(\mathbb{R}^3)}^2 + \| \varepsilon_ij^{\varepsilon_i,in}\|_{H^s(\mathbb{R}^3)}^2
		  + \|E^{\varepsilon_i,in}\|_{H^{s}(\mathbb{R}^3)}^2
		  + \|B^{\varepsilon_i,in}\|_{H^{s}(\mathbb{R}^3)}^2
			\leq \kappa_0,\, i=1,2,
	  \end{align*}
for some sufficiently small $\kappa_0$.  Let $(u^{\varepsilon_i},j^{\varepsilon_i},E^{\varepsilon_i},B^{\varepsilon_i})$, $i=1,2,$ be the corresponding global solutions.
Denote
\begin{align*}
&\mathcal{E}^{\varepsilon_2,\varepsilon_1}(t)=\|u^{\varepsilon_2}-u^{\varepsilon_1}\|^2_{H^{s'}}
 +\|E^{\varepsilon_2}-E^{\varepsilon_1}\|^2_{H^{s}}
 +\|B^{\varepsilon_2}-B^{\varepsilon_1}\|^2_{H^{s}},\\
 &\mathcal{D}^{\varepsilon_2,\varepsilon_1}(t)=\mu\|\na( u^{\varepsilon_2}-u^{\varepsilon_1})\|^2_{H^{s'}}
 +\tfrac{1}{\sigma}\|j^{\varepsilon_2}-j^{\varepsilon_1}\|^2_{H^{s}},
 \end{align*}
$\mathcal{E}^{\varepsilon_i}_{\leq k_0}$, $\mathcal{E}^{\varepsilon_i}_{> k_0}$, 
$\mathcal{D}^{\varepsilon_i}_{\leq k_0}$, $\mathcal{D}^{\varepsilon_i}_{> k_0}$, $i=1,2,$ $\mathcal{E}^{\varepsilon_2,\varepsilon_1}_{\leq k_0}$, $\mathcal{E}^{\varepsilon_2,\varepsilon_1}_{> k_0},$ $\mathcal{D}^{\varepsilon_2,\varepsilon_1}_{\leq k_0},$ $\mathcal{D}^{\varepsilon_2,\varepsilon_1}_{> k_0}$ are the corresponding low and high frequency truncation functions.

\begin{lemma}\label{low-diff}
For any $k_0\geq -1$, there exists a positive constant $C$, independent of $\varepsilon_1,\varepsilon_2$ and $k_0$, such that
 \begin{align*}
 \sup_{t\geq 0}\mathcal{E}^{\varepsilon_2,\varepsilon_1}_{\leq k_0}(t)+&\int_0^{+\infty}\mathcal{D}^{\varepsilon_2,\varepsilon_1}_{\leq k_0}(t)\d t
 \leq C\Big( \mathcal{E}^{\varepsilon_2,\varepsilon_1,in}+\|\varepsilon_1 j^{\varepsilon_1,in}\|_{H^{s}}^2
    +\|\varepsilon_2 j^{\varepsilon_2,in}\|_{H^{s}}^2
  \\&  +\sup_{t\geq 0}(\mathcal{E}^{\varepsilon_1}_{>k_0}(t)
    +\mathcal{E}^{\varepsilon_2}_{>k_0}(t))
+\int_0^{+\infty}
(\mathcal{D}^{\varepsilon_1}_{>k_0}(t)
+\mathcal{D}^{\varepsilon_2}_{>k_0}(t))\d t
+(\varepsilon_1+\varepsilon_2)2^{4k_0}
\Big).
\end{align*}
 \end{lemma}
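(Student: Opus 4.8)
The plan is to run a frequency-localized energy estimate on the difference system and to feed the troublesome $\varepsilon_i^2$-terms into Lemma~\ref{hf}. Set $\delta g = g^{\varepsilon_2}-g^{\varepsilon_1}$ for $g\in\{u,j,E,B,p,\tilde p\}$. Subtracting the two copies of \eqref{euqNSM}, the equations for $\delta u$, $\delta E$, $\delta B$ are of the same form as before, while the second equation, because the coefficients $\varepsilon_1^2,\varepsilon_2^2$ differ, cannot be organized around a single time derivative of $\delta j$; instead I move every $\varepsilon_i^2$-term to the right-hand side and write it as
\begin{align*}
\tfrac1\sigma\,\delta j+\nabla\delta\tilde p-c\,\delta E=\delta(u\times B)-\mathcal{R},
\end{align*}
where $\mathcal{R}=\big(\varepsilon_2^2\partial_t j^{\varepsilon_2}-\varepsilon_1^2\partial_t j^{\varepsilon_1}\big)-\mu\big(\varepsilon_2^2\Delta j^{\varepsilon_2}-\varepsilon_1^2\Delta j^{\varepsilon_1}\big)+\big(\varepsilon_2^2(u^{\varepsilon_2}\cdot\nabla j^{\varepsilon_2}+j^{\varepsilon_2}\cdot\nabla u^{\varepsilon_2})-\varepsilon_1^2(u^{\varepsilon_1}\cdot\nabla j^{\varepsilon_1}+j^{\varepsilon_1}\cdot\nabla u^{\varepsilon_1})\big)$.

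Next I apply $\Delta_k$ and take $L^2$ inner products, mimicking Lemma~\ref{lemm:apriori}: the $\delta u$ equation against $\Delta_k\delta u$ with weight $2^{2ks'}$, the $\delta E$, $\delta B$ equations against $(\Delta_k\delta E,\Delta_k\delta B)$ with weight $c\,2^{2ks}$, and the $\delta j$ equation against $\Delta_k\delta j$ with weight $2^{2ks}$. The decisive point is to test the $j$-equation against $\Delta_k\delta j$ and \emph{not} against $\varepsilon^2\Delta_k\delta j$: this never creates a $\tfrac{\d}{\d t}\|\varepsilon\delta j\|^2$ term (consistent with $\varepsilon\delta j$ being absent from $\mathcal{E}^{\varepsilon_2,\varepsilon_1}$), it produces the dissipation $\tfrac1\sigma\|\Delta_k\delta j\|_{L^2}^2$, and it produces the coupling $-c\la\Delta_k\delta E,\Delta_k\delta j\ra$, which cancels the $+c\la\Delta_k\delta j,\Delta_k\delta E\ra$ coming from the Maxwell pair (the pressure drops by $\div\,\delta j=0$, and the $E$–$B$ coupling drops by \eqref{cancek}). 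Summing over $-1\le k\le k_0$ gives
\begin{align*}
\tfrac12\tfrac{\d}{\d t}\mathcal{E}^{\varepsilon_2,\varepsilon_1}_{\le k_0}(t)+\mathcal{D}^{\varepsilon_2,\varepsilon_1}_{\le k_0}(t)=\mathrm{NL}(t)+\mathrm{Err}(t),
\end{align*}
with $\mathrm{NL}$ collecting $\delta(j\times B)$, $\delta(u\cdot\nabla u)$, $\delta(\varepsilon^2 j\cdot\nabla j)$ tested against $\delta u$ together with $\delta(u\times B)$ tested against $\delta j$, and $\mathrm{Err}=-\sum_{k\le k_0}2^{2ks}\la\Delta_k\mathcal{R},\Delta_k\delta j\ra$.

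For $\mathrm{Err}$ I use Young's inequality to peel off $\tfrac1{2\sigma}\|\delta j\|_{H^s_{\le k_0}}^2$ (absorbed on the left) and am left with the $\|\cdot\|_{H^s_{\le k_0}}^2$-norms of the three pieces of $\mathcal{R}$. The time-derivative piece is exactly what Lemma~\ref{hf} controls, contributing $\|\varepsilon_i j^{\varepsilon_i,in}\|_{H^s}^2+\varepsilon_i 2^{2k_0}$; the diffusion and convection pieces are treated by \eqref{ffk0-new}, \eqref{l+} and Lemma~\ref{jb} exactly as in \eqref{haofan}, \eqref{e2uj-new}, \eqref{e2uj}, producing factors $\varepsilon_i^2 2^{2k_0}$ times quantities bounded through \eqref{uni-2} by $\kappa_0$. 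Since $0<\varepsilon_i\le1$ forces $\varepsilon_i^2\le\varepsilon_i$, and since controlling the $\partial_t j^{\varepsilon_i}$ cross-term against $\nabla B$ via \eqref{EB} injects further powers of $2^{k_0}$, the whole of $\int_0^{\infty}\mathrm{Err}\,\d t$ is bounded by $C\big(\|\varepsilon_1 j^{\varepsilon_1,in}\|_{H^s}^2+\|\varepsilon_2 j^{\varepsilon_2,in}\|_{H^s}^2+(\varepsilon_1+\varepsilon_2)2^{4k_0}\big)$.

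Finally $\mathrm{NL}$: each term is bilinear, so its difference is (difference factor)$\times$(full field), and I estimate it by Lemma~\ref{jb}, keeping the tested factor $\nabla\delta u$ or $\delta j$ in the $\le k_0$-restricted dissipation norm. The remaining difference factor $\|\delta g\|_{H^l}$ is split as $\|\delta g\|_{H^l_{\le k_0}}+\|\delta g\|_{H^l_{>k_0}}$, where $\|\delta g\|_{H^l_{>k_0}}\le\|g^{\varepsilon_2}\|_{H^l_{>k_0}}+\|g^{\varepsilon_1}\|_{H^l_{>k_0}}$: the low piece combines with $\sqrt{\mathcal{D}^{\varepsilon_2,\varepsilon_1}_{\le k_0}}$ against a prefactor bounded by $\kappa_0$ (hence absorbed, or producing a Gronwall factor $\exp(C\int\mathcal{D}^{\varepsilon_i}\,\d t)\le e^{C\kappa_0}$), while the high piece produces exactly $\mathcal{E}^{\varepsilon_i}_{>k_0}$ and $\mathcal{D}^{\varepsilon_i}_{>k_0}$; the term $\delta(\varepsilon^2 j\cdot\nabla j)$ additionally carries $\varepsilon_i$-factors through $\varepsilon_i j^{\varepsilon_i}$ and $\varepsilon_2-\varepsilon_1$ and is absorbed into the same bounds. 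Integrating in time and applying Gronwall's inequality then yields the claim. I expect the main obstacle to be precisely this bookkeeping in $\mathrm{NL}$: one must split every nonlinear contribution so that the genuinely low-frequency part of the difference is absorbed by $\mathcal{D}^{\varepsilon_2,\varepsilon_1}_{\le k_0}$ using the smallness of $\kappa_0$, leaving behind only the individual high-frequency energies (uniformly small by the separate high-frequency analysis) and the $\varepsilon_i$-weighted remainder, while keeping the $\la\delta j,\delta E\ra$ coupling under control through the degree-one pairing that makes the $j$-dissipation available without placing $\varepsilon\delta j$ in the energy.
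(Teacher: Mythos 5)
Your proposal is correct and follows essentially the same route as the paper's proof: a frequency-localized energy estimate for the difference system in which the $\delta j$-equation is purely algebraic (tested against $\Delta_k\delta j$, so no $\varepsilon\delta j$ energy appears and the $c\langle\delta E,\delta j\rangle$ couplings cancel), with the $\varepsilon_i^2$-remainders estimated via \eqref{ffk0-new}, \eqref{l+}, Lemma \ref{jb} and Lemma \ref{hf}, the nonlinear differences split into low frequencies (absorbed or Gronwall'd using $\int\mathcal{D}^{\varepsilon_i}\lesssim\kappa_0$) and high frequencies (yielding $\mathcal{E}^{\varepsilon_i}_{>k_0}$, $\mathcal{D}^{\varepsilon_i}_{>k_0}$), and a final Gronwall argument. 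The only caveat is your treatment of $\delta(\varepsilon^2 j\cdot\nabla j)$: it cannot be handled as a genuine (difference)$\times$(full field) bilinear splitting, since neither $\nabla\delta j$ nor $\delta(\varepsilon j)$ is controlled by $\mathcal{D}^{\varepsilon_2,\varepsilon_1}$; as in the paper's estimates for $R_{III_1},R_{III_2}$ via \eqref{youyong}, each term $\varepsilon_i^2 j^{\varepsilon_i}\cdot\nabla j^{\varepsilon_i}$ must be bounded separately using its own $\varepsilon_i^2$ prefactor, which is what your "absorbed into the same bounds" remark amounts to.
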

 \begin{proof}
The difference
$(u^{\varepsilon_2}-u^{\varepsilon_1},
j^{\varepsilon_2}-j^{\varepsilon_1},E^{\varepsilon_2}-E^{\varepsilon_1},
B^{\varepsilon_2}-B^{\varepsilon_1})$ reads
\begin{align*}
  \left\{
\begin{array}{ll}
\partial_t(u^{\varepsilon_2}-u^{\varepsilon_1})-\mu\Delta (u^{\varepsilon_2}-u^{\varepsilon_1})+\nabla (p^{\varepsilon_2}-p^{\varepsilon_1})=II_1+II_2+II_3+II_4+III_1+III_2,\\[1ex]
\tfrac{1}{\sigma}(j^{\varepsilon_2}-j^{\varepsilon_1})+\nabla ( \tilde{p}^{\varepsilon_2}-\tilde{p}^{\varepsilon_1})-c(E^{\varepsilon_2}-
E^{\varepsilon_1})=II_5+II_6+III_3+III_4, \\[1ex]
\tfrac{1}{c}\partial_t(E^{\varepsilon_2}-E^{\varepsilon_1})-\nabla \times (B^{\varepsilon_2}-B^{\varepsilon_1})+(j^{\varepsilon_2}-j^{\varepsilon_1})=0, \\[1ex]
\tfrac{1}{c}\partial_t(B^{\varepsilon_2}-B^{\varepsilon_1})+\nabla \times (E^{\varepsilon_2}-E^{\varepsilon_1})=0, \\[1ex]
\div (u^{\varepsilon_2}-u^{\varepsilon_1})=0,\ \div (j^{\varepsilon_2}-j^{\varepsilon_1})=0,\ \div (E^{\varepsilon_2}-E^{\varepsilon_1})=0,\ \div (B^{\varepsilon_2}-B^{\varepsilon_1})=0,
\end{array}
\right.
\end{align*}
where
\begin{align*}
  &II_1=(j^{\varepsilon_2}-j^{\varepsilon_1})\times B^{\varepsilon_2},\,\,
  II_2=j^{\varepsilon_1}\times (B^{\varepsilon_2}-B^{\varepsilon_1}),\,\,
  II_3=-( u^{\varepsilon_2}-u^{\varepsilon_1})\cdot \nabla u^{\varepsilon_2},\\
  &II_4=-u^{\varepsilon_1}\cdot\nabla (u^{\varepsilon_2}-u^{\varepsilon_1}),\,\,
II_5=(u^{\varepsilon_2}-u^{\varepsilon_1})\times B^{\varepsilon_2},\,\,
II_6=u^{\varepsilon_1}\times (B^{\varepsilon_2}-B^{\varepsilon_1}),\\
&III_1=-
\varepsilon_2^2 j^{\varepsilon_2}\cdot \na j^{\varepsilon_2},\,\,
III_2=\varepsilon_1^2 j^{\varepsilon_1}\cdot \na j^{\varepsilon_1},\\
&III_3=-\varepsilon_2^2\partial_tj^{\varepsilon_2}
-\varepsilon_2^2u^{\varepsilon_2}\cdot \nabla j^{\varepsilon_2}
-\varepsilon_2^2j^{\varepsilon_2}\cdot \nabla u^{\varepsilon_2}
+\varepsilon_2^2\mu\Delta j^{\varepsilon_2},\\
&III_4=\varepsilon_1^2\partial_tj^{\varepsilon_1}
+\varepsilon_1^2u^{\varepsilon_1}\cdot \nabla j^{\varepsilon_1}
+\varepsilon_1^2j^{\varepsilon_1}\cdot \nabla u^{\varepsilon_1}
-\varepsilon_1^2\mu\Delta j^{\varepsilon_1}.
\end{align*}
Following along the same lines as the proof of  \eqref{us'}, summing over $ -1\leq k\leq k_0$ instead of summing over $ -1\leq k< +\infty$,  we get
\begin{align}\label{diff-jeb}
 \tfrac{1}{2}\tfrac{\d}{\d t}&\mathcal{E}^{\varepsilon_2,\varepsilon_1}_{\leq k_0}+\mathcal{D}^{\varepsilon_2,\varepsilon_1}_{\leq k_0}
 =
 \sum_{k\leq k_0}2^{2ks'}\la\Delta_k\big(\sum_{m=1}^{4}II_m+\sum_{m=1}^{2}III_m\big),\Delta_k (u^{\varepsilon_2}-u^{\varepsilon_1})\ra_{L^2}\no\\&+
 \sum_{k\leq k_0}2^{2ks}\la\Delta_k\big(\sum_{m=5}^6II_m+\sum\limits_{m=3}^{4}III_m
\big),\Delta_k (j^{\varepsilon_2}-j^{\varepsilon_1})\ra_{L^2}\triangleq \sum_{m=1}^6R_{II_m}+\sum_{m=1}^4R_{III_m}.
\end{align}

By adapting the method used to control $I_{1,k}$ and $I_{2,k}$ in Lemma \ref {lemm:apriori}, using the inequality \eqref{ffk0-new} and Lemma \ref{jb}, we can obtain 
\begin{align*}
  &R_{II_2}
  \lesssim \|j^{\varepsilon_1}\|_{H^s}\|B^{\varepsilon_2}-B^{\varepsilon_1}\|_{H^{s}}
 \| \na (u^{\varepsilon_2}-u^{\varepsilon_1})\|_{H^{s'}},\\
  &R_{II_3}
  \lesssim \|u^{\varepsilon_2}-u^{\varepsilon_1}\|_{H^{s'}}
  \|\na u^{\varepsilon_2}\|_{H^{s'}}
 \| \na (u^{\varepsilon_2}-u^{\varepsilon_1})\|_{H^{s'}},\\
 &R_{II_6}
  \lesssim \|\na u^{\varepsilon_1}\|_{H^{s'}}
  \|B^{\varepsilon_2}-B^{\varepsilon_1}\|_{H^{s}}
 \| j^{\varepsilon_2}-j^{\varepsilon_1}\|_{H^{s}}.
 \end{align*}
This gives 
 \begin{align*}
 R_{II_2}+R_{II_3}+R_{II_6}\lesssim &\big((\mathcal{D}^{\varepsilon_1})^{\fab}+
 (\mathcal{D}^{\varepsilon_2})^{\fab}\big)(\mathcal{E}^{\varepsilon_2,\varepsilon_1})^{\fab}
 (\mathcal{D}^{\varepsilon_2,\varepsilon_1})^{\fab}\\
 \leq &C(\mathcal{D}^{\varepsilon_1}+
 \mathcal{D}^{\varepsilon_2})\mathcal{E}^{\varepsilon_2,\varepsilon_1}+\tfrac{1}{8}
 \mathcal{D}^{\varepsilon_2,\varepsilon_1}.
 \end{align*}
Similarly, we have
 \begin{align*}
  &R_{II_1}
  \lesssim \|j^{\varepsilon_2}-j^{\varepsilon_1}\|_{H^s}\|B^{\varepsilon_2}\|_{H^{s}}
 \| \na (u^{\varepsilon_2}-u^{\varepsilon_1})\|_{H^{s'}},\\
  &R_{II_4}
  \lesssim \|u^{\varepsilon_1}\|_{H^{s'}}
  \|\na (u^{\varepsilon_2}-u^{\varepsilon_1})\|_{H^{s'}}
 \| \na (u^{\varepsilon_2}-u^{\varepsilon_1})\|_{H^{s'}},\\
 &R_{II_5}
  \lesssim \|\na (u^{\varepsilon_2}-u^{\varepsilon_1})\|_{H^{s'}}
  \|B^{\varepsilon_2}\|_{H^{s}}
 \| j^{\varepsilon_2}-j^{\varepsilon_1}\|_{H^{s}}.
\end{align*}
Recall $\kappa_0$ is sufficient small. We deduce from \eqref{uni-2} that
\begin{align*}
 R_{II_1}+R_{II_4}+R_{II_5}\leq C\big( (\mathcal{E}^{\varepsilon_1})^{\fab}+
 (\mathcal{E}^{\varepsilon_2})^{\fab}\big)\mathcal{D}^{\varepsilon_2,\varepsilon_1} \lesssim \kappa_0^{\fab} \mathcal{D}^{\varepsilon_2,\varepsilon_1}\leq \tfrac{1}{2}
\mathcal{D}^{\varepsilon_2,\varepsilon_1}.
 \end{align*}
As for $R_{III_1}$,  direct  calculation gives
\begin{align*}
  R_{III_1}
  \lesssim &\varepsilon_2^2\|j^{\varepsilon_2}\cdot \na j^{\varepsilon_2}\|_{H^{s'}_{k\leq k_0}}\| u^{\varepsilon_2}-u^{\varepsilon_1}\|_{H^{s'}_{k\leq k_0}}
  \end{align*}
  Using  \eqref{ffk0-new} and \eqref{l+}, $s-1 \leq s'\leq s+1$ and Lemma \ref{jb}, we have
\begin{align}\label{youyong}
\|j^{\varepsilon_2}\cdot \na j^{\varepsilon_2}\|_{H^{s'}_{k\leq k_0}}\lesssim 2^{k_0(s'-s+1)}\|j^{\varepsilon_2}\cdot \na j^{\varepsilon_2}\|_{H^{s-1}}\lesssim 2^{2k_0}\|j^{\varepsilon_2}\|_{H^{s}}^2.
\end{align}
 Together with Young's inequality, we obtain
 \begin{align*}
  R_{III_1}
  \lesssim &\varepsilon_2^22^{2k_0}
  \|j^{\varepsilon_2}\|_{H^{s}}^2\| u^{\varepsilon_2}-u^{\varepsilon_1}\|_{H^{s'}}\lesssim \varepsilon_2^42^{4k_0}
  \|j^{\varepsilon_2}\|_{H^{s}}^2+  \|j^{\varepsilon_2}\|_{H^{s}}^2\| u^{\varepsilon_2}-u^{\varepsilon_1}\|_{H^{s'}}^2.
  \end{align*}
 Similarly, we have
  \begin{align*}
 R_{III_2}
 \lesssim \varepsilon_1^42^{4k_0}
  \|j^{\varepsilon_1}\|_{H^{s}}^2+  \|j^{\varepsilon_1}\|_{H^{s}}^2\| u^{\varepsilon_2}-u^{\varepsilon_1}\|_{H^{s'}}^2.
  \end{align*}
Next, direct calculation gives
  \begin{align*}
    R_{III_3}
     \lesssim &\|III_3\|_{H^{s}_{k\leq k_0}}
     \| j^{\varepsilon_2}-j^{\varepsilon_1}\|_{H^{s}_{k\leq k_0}}
     \leq C\|III_3\|_{H^{s}_{k\leq k_0}}^2+\tfrac{1}{16}\| j^{\varepsilon_2}-j^{\varepsilon_1}\|_{H^s}^2.
     \end{align*}
   Similar arguments lead to  \begin{align*}
    R_{III_4}
     \leq C\|III_4\|_{H^{s}_{k\leq k_0}}^2+\tfrac{1}{16}\| j^{\varepsilon_2}-j^{\varepsilon_1}\|_{H^s}^2.
     \end{align*}
  Finally,  by using \eqref{haofan}, \eqref{e2uj-new} and \eqref{e2uj}, we have
      \begin{align*}
   \|III_3\|_{H^{s}_{k\leq k_0}}^2+\|III_4\|_{H^{s}_{k\leq k_0}}^2\lesssim &\sum_{i=1}^2
     \big(\|\varepsilon_i^2\partial_tj^{\varepsilon_i}\|_{H^s_{k\leq k_0}}^2+
    \varepsilon_i^2 2^{2k_0}(\|\varepsilon_i\na j^{\varepsilon_i}\|_{H^{s}}^2\\&+
     \|\nabla u^{\varepsilon_i}\|_{H^{s'}}^2\|\varepsilon j^{\varepsilon_i}\|_{H^{s}}^2+\| u^{\varepsilon_i}\|_{H^{s'}}^2\|\varepsilon \na j^{\varepsilon_i}\|_{H^{s}}^2)\big).
     \end{align*}
  Since $0<\varepsilon_1,\varepsilon_2\leq 1$ and $k_0\geq -1$, we eventually get  that,
 \begin{align*}
 \sum_{m=1}^4R_{III_m}
 \leq &C\Big((\varepsilon_1^2+\varepsilon_2^2)2^{4k_0}(1+\mathcal{E}^{\varepsilon_1}+
 \mathcal{E}^{\varepsilon_2})
 (\mathcal{D}^{\varepsilon_1}+
 \mathcal{D}^{\varepsilon_2})
 \\&+(\mathcal{D}^{\varepsilon_1}+
 \mathcal{D}^{\varepsilon_2})\mathcal{E}^{\varepsilon_2,\varepsilon_1}
 +\|\varepsilon_1^2\partial_tj^{\varepsilon_1}\|_{H^s_{k\leq k_0}}^2+\|\varepsilon_2^2\partial_tj^{\varepsilon_2}\|_{H^s_{k\leq k_0}}^2\Big)
 +\tfrac{1}{8}
 \mathcal{D}^{\varepsilon_2,\varepsilon_1}.
 \end{align*}
 
Substituting the estimates for $R_{II}$ and $R_{III}$  into  \eqref{diff-jeb},  we have
\begin{align*}
 \tfrac{1}{2}\tfrac{\d}{\d t}\mathcal{E}^{\varepsilon_2,\varepsilon_1}_{\leq k_0}+\mathcal{D}^{\varepsilon_2,\varepsilon_1}_{\leq k_0}
 \leq &C\Big((\mathcal{D}^{\varepsilon_1}+
\mathcal{D}^{\varepsilon_2})\mathcal{E}^{\varepsilon_2,\varepsilon_1}
 + \|\varepsilon_1^2\partial_tj^{\varepsilon_1}\|_{H^s_{k\leq k_0}}^2+
\|\varepsilon_i^2\partial_tj^{\varepsilon_i}\|_{H^s_{k\leq k_0}}^2
\\& +(\varepsilon_1^2+\varepsilon_2^2)2^{4k_0}(1+\mathcal{E}^{\varepsilon_1}+
 \mathcal{E}^{\varepsilon_2})
 (\mathcal{D}^{\varepsilon_1}+
 \mathcal{D}^{\varepsilon_2})\Big)
+\tfrac{3}{4}\mathcal{D}^{\varepsilon_2,\varepsilon_1}.
 \end{align*}
 According to  the decomposition
 \begin{align}
\label{dec-new}\mathcal{E}^{\varepsilon_2,\varepsilon_1}=
   \mathcal{E}^{\varepsilon_2,\varepsilon_1}_{\leq k_0}+\mathcal{E}^{\varepsilon_2,\varepsilon_1}_{> k_0},\,\,
   \mathcal{D}^{\varepsilon_2,\varepsilon_1}=
   \mathcal{D}^{\varepsilon_2,\varepsilon_1}_{\leq k_0}+\mathcal{D}^{\varepsilon_2,\varepsilon_1}_{> k_0},
 \end{align}
 we finally conclude that
 \begin{align*}
 \tfrac{1}{2}\tfrac{\d}{\d t}\mathcal{E}^{\varepsilon_2,\varepsilon_1}_{\leq k_0}+\tfrac{1}{4}\mathcal{D}^{\varepsilon_2,\varepsilon_1}_{\leq k_0}
 \leq & C
 (\mathcal{D}^{\varepsilon_1}+
 \mathcal{D}^{\varepsilon_2})\mathcal{E}^{\varepsilon_2,\varepsilon_1}_{\leq k_0}
 +H(t),\end{align*}
 where \begin{align*}
 H(t)=&
 C\Big((\mathcal{D}^{\varepsilon_1}+
 \mathcal{D}^{\varepsilon_2})\mathcal{E}^{\varepsilon_2,\varepsilon_1}_{>k_0}
 +\|\varepsilon_1^2\partial_tj^{\varepsilon_1}\|_{H^s_{k\leq k_0}}^2+\|\varepsilon_2^2\partial_tj^{\varepsilon_2}\|_{H^s_{k\leq k_0}}^2\\&
 +(\varepsilon_1^2+\varepsilon_2^2)2^{4k_0}
 (1+\mathcal{E}^{\varepsilon_1}+
 \mathcal{E}^{\varepsilon_2})(\mathcal{D}^{\varepsilon_1}+\mathcal{D}^{\varepsilon_2})\Big)
+\tfrac{3}{4}\mathcal{D}^{\varepsilon_2,\varepsilon_1}_{>k_0}.
\end{align*}
 Applying the  Gr\"{o}nwall inequality, we obtain
\begin{align}\label{gron-diff}
 \sup_{t\geq 0}\mathcal{E}^{\varepsilon_2,\varepsilon_1}_{\leq k_0}(t)+\int_0^{+\infty}\mathcal{D}^{\varepsilon_2,\varepsilon_1}_{\leq k_0}(t)\d t
 \lesssim e^{C\int_0^{+\infty}(\mathcal{D}^{\varepsilon_1}+
 \mathcal{D}^{\varepsilon_2})\d t}\Big(\mathcal{E}^{\varepsilon_2,\varepsilon_1,in}_{\leq k_0}+\int_0^{+\infty}H(t)\d t\Big).
\end{align}

We now control the right-hand side terms. First, according to the uniform bound \eqref{uni-2}, we have 
\begin{align*}
\sup_{t\geq 0}(\mathcal{E}^{\varepsilon_1}(t)+
 \mathcal{E}^{\varepsilon_2} (t))\lesssim 1,\,\,
  \int_0^{+\infty}(\mathcal{D}^{\varepsilon_1}+
 \mathcal{D}^{\varepsilon_2})\d  t\lesssim 1.
\end{align*}
Next, due to the definition of $\mathcal{E}^{\varepsilon}_{>k_0},$ $\mathcal{D}^{\varepsilon}_{>k_0}$, $\mathcal{E}^{\varepsilon_2,\varepsilon_1}_{>k_0}$ and $\mathcal{D}^{\varepsilon_2,\varepsilon_1}_{>k_0}$, it is straightforward to see that
\begin{align}\label{e12}
\mathcal{E}^{\varepsilon_2,\varepsilon_1,in}_{\leq k_0}\leq \mathcal{E}^{\varepsilon_2,\varepsilon_1,in},\,\,\mathcal{E}^{\varepsilon_2,\varepsilon_1}_{>k_0}
  \lesssim \mathcal{E}^{\varepsilon_1}_{>k_0}+\mathcal{E}^{\varepsilon_2}_{>k_0},\,\,
\mathcal{D}^{\varepsilon_2,\varepsilon_1}_{>k_0}
  \lesssim \mathcal{D}^{\varepsilon_1}_{>k_0}+\mathcal{D}^{\varepsilon_2}_{>k_0}.
\end{align}
Hence, we deduce that 
\begin{align*}
&\int_0^{+\infty}(\mathcal{D}^{\varepsilon_1}+ \mathcal{D}^{\varepsilon_2})\mathcal{E}^{\varepsilon_2,\varepsilon_1}_{>k_0}\d t\lesssim \sup_{t\geq 0}(\mathcal{E}^{\varepsilon_1}_{>k_0}(t)+\mathcal{E}^{\varepsilon_2}_{>k_0}(t)),\\
 &\int_0^{+\infty}(\varepsilon_1^2+\varepsilon_2^2)2^{4k_0}
 (1+\mathcal{E}^{\varepsilon_1}+
 \mathcal{E}^{\varepsilon_2})(\mathcal{D}^{\varepsilon_1}+\mathcal{D}^{\varepsilon_1})\d t\lesssim (\varepsilon_1^2+\varepsilon_2^2)2^{4k_0},\\
 &\int_0^{+\infty}
\mathcal{D}^{\varepsilon_2,\varepsilon_1}_{>k_0}(t)\d t
\lesssim \int_0^{+\infty}
(\mathcal{D}^{\varepsilon_1}_{>k_0}(t)
+\mathcal{D}^{\varepsilon_2}_{>k_0}(t))\d t.
\end{align*}
Finally, by virtue of  Lemma \ref{hf}, we have \begin{align*}
\int_0^{+\infty}(\|\varepsilon_1^2\partial_t j^{\varepsilon_1}\|_{H^{s}_{\leq k_0}}^2+\|\varepsilon_2^2\partial_t j^{\varepsilon_2}\|_{H^{s}_{\leq k_0}}^2)\d t\lesssim \|\varepsilon_1 j^{\varepsilon_1,in}\|_{H^{s}}^2+\|\varepsilon_2 j^{\varepsilon_2,in}\|_{H^{s}}^2+(\varepsilon_1+\varepsilon_2)2^{2k_0}.
 \end{align*} 
Plugging all the above inequalities into  \eqref{gron-diff}, and using the fact  $0<\varepsilon_1,\varepsilon_2\leq 1$ and $k_0\geq -1$, complete the proof of the lemma.
\end{proof}
\subsection{Convergence to the limit system}
\
\newline
\indent
{\bf Proof of Theorem \ref{thm:Global-converg}.}
 Under the initial assumption
\begin{align}\label{assumption}
(u^{\varepsilon,in},\varepsilon j^{\varepsilon,in}, E^{\varepsilon,in},
  B^{\varepsilon,in}) \longrightarrow (u^{in},0, E^{in}, B^{in}) \,\,\, \textit{in}\,\,\, H^{s'}\times (H^{s})^3, \,\,\,{\text{as}}\,\,\, \varepsilon\rightarrow 0,
  \end{align}
after extracting an arbitrary sequence $\{\varepsilon_n\}$, we deduce from Lemma \ref{frequency} that, there exists a corresponding sequence $\{\omega_k\}$ of positive numbers satisfying
\begin{align*}
 \omega\in AF(\tfrac{1}{2}),\, \,\text{and}\,\,\lim\limits_{k\rightarrow +\infty}\omega_k=+\infty,
\end{align*}
such that for all $n$,
$(u^{\varepsilon_n,in}, \varepsilon_n j^{\varepsilon_n,in}, E^{\varepsilon_n,in},
  B^{\varepsilon_n,in}) \in H^{s'}(\omega)\times (H^{s}(\omega))^3,$
and
\begin{align*}
\sup_{n}(\|u^{\varepsilon_n,in}\|_{H^{s'}(\omega)}^2 + \| \varepsilon_nj^{\varepsilon_n,in}\|_{H^s(\omega)}^2
		  + \|E^{\varepsilon_n,in}\|_{H^{s}(\omega)}^2
		  + \|B^{\varepsilon_n,in}\|_{H^{s}(\omega)}^2)<\infty.
\end{align*}
Note that there also has a small initial assumption:
 \begin{align*}	
\|u^{\varepsilon,in}\|_{H^{s'}}^2 + \| \varepsilon j^{\varepsilon,in}\|_{H^s}^2
		  + \|E^{\varepsilon,in}\|_{H^{s}}^2
		  + \|B^{\varepsilon,in}\|_{H^{s}}^2
			\leq \kappa_0, \,\,{\text{for all}}\,\, \varepsilon>0,
	  \end{align*}
for some sufficiently small $\kappa_0$. We then conclude from Theorem \ref{thm:GNSM} that,  system \eqref{euqNSM} admits global solutions
$(u^{\varepsilon_n}, j^{\varepsilon_n}, E^{\varepsilon_n}, B^{\varepsilon_n})$ with the initial data  $(u^{\varepsilon_n,in},  j^{\varepsilon_n,in}, E^{\varepsilon_n,in},
  B^{\varepsilon_n,in})$ for all $n$. Furthermore, they are uniformly bounded with respect to $n$ as stated in \eqref{uni-2} and \eqref{uni-omega-ep}:
\begin{align}\label{gaoding}
	\sup_{n}\Big(\sup\limits_{t\geq 0}\mathcal{E}^{\varepsilon_n}(t)+
  \int_{0}^{+\infty} \mathcal{D}^{\varepsilon_n}(t)\d t\Big) \leq 2\kappa_0,
  \end{align}
\begin{align}\label{jiayou}
& \sup_{n}\Big(\sup\limits_{t\geq 0}\mathcal{E}^{\varepsilon_n}_{\omega}(t)+\int_{0}^{+\infty} \mathcal{D}^{\varepsilon_n}_{\omega}(t) \d t \Big)< \infty.
\end{align}

Since $\omega$ is increasing, similar to \eqref{w},  we have 
\begin{align*}
   \sup\limits_{t\geq 0}\mathcal{E}^{\varepsilon_n}_{>k_0}(t)+\int_0^{+\infty} \mathcal{D}^{\varepsilon_n}_{>k_0}(t) \d t
   \leq \frac{1}{\omega_k^2} \Big(\sup\limits_{t\geq 0}\mathcal{E}^{\varepsilon_n}_{\omega}(t)+\int_{0}^{+\infty} \mathcal{D}^{\varepsilon_n}_{\omega}(t) \d t\Big) .\end{align*}
Combining \eqref{jiayou} with $\lim\limits_{k\rightarrow+\infty}\omega_k=+\infty$, we deduce that,  $\forall \eta>0$, there exists a $k_0\geq -1$, such that
for all $n$,
\begin{align}\label{hign-diff}
\sup\limits_{t\geq 0}\mathcal{E}^{\varepsilon_n}_{>k_0}(t)+\int_0^\infty \mathcal{D}^{\varepsilon_n}_{>k_0}(t) \d t\leq \eta.
\end{align}
Fix $k_0$. Since $\lim\limits_{n\rightarrow +\infty}\varepsilon_n=0$,
there exists a $N_1(k_0,\eta)>0$, such that if $n>N_1$, then
\begin{align*}
  2^{4k_0}\varepsilon_n\leq \eta.
\end{align*}
Next, the initial assumption \eqref{assumption} implies that, there exists a $N_2>0$, such that if $n>N_2$, then
\begin{align*}
   \|u^{\varepsilon_n,in}-u^{in}\|_{H^{s'}}^2 + \|\varepsilon_n j^{\varepsilon_n,in}\|_{H^{s}}^2+
   \|E^{\varepsilon_n,in}-E^{in}\|_{H^{s}}^2
   +\|B^{\varepsilon_n,in}-B^{in}\|_{H^{s}}^2\leq \eta.
\end{align*}
This implies  if $n_1,n_2>N_2$, then
\begin{align*}
  \mathcal{E}^{\varepsilon_{n_2},\varepsilon_{n_1},in} + \|\varepsilon_{n_1} j^{\varepsilon_{n_1},in}\|_{H^{s}}^2+ \|\varepsilon_{n_2} j^{\varepsilon_{n_2},in}\|_{H^{s}}^2
   \leq 4\eta.
\end{align*}
According to Lemma \ref{low-diff}, we conclude that, if $n_1,n_2\geq N_0=\max\{N_1,N_2\}$, then
\begin{align*}
 \sup_{t\geq 0}\mathcal{E}^{n_2,n_1}_{\leq k_0}(t)+\int_0^{+\infty}\mathcal{D}^{n_2,n_1}_{\leq k_0}(t)\d t
 \leq &C\eta.
\end{align*}
Together with \eqref{dec-new} and \eqref{e12} and \eqref{hign-diff}, we eventually get that
\begin{align*}
 \sup_{t\geq 0}\mathcal{E}^{n_2,n_1}(t)+\int_0^{+\infty}\mathcal{D}^{n_2,n_1}(t)\d t
 \leq &C\eta.
 \end{align*}
 This implies for an arbitrarily sequence $\{\varepsilon_n\}$, $(u^{\varepsilon_n},\na u^{\varepsilon_n},j^{\varepsilon_n},E^{\varepsilon_n},B^{\varepsilon_n})$ is a Cauchy sequence in $\mathbb{X}$:
$$\mathbb{X}=L^\infty(\mathbb{R}^+;H^{s'})\times L^2(\mathbb{R}^+;{H}^{s'})\times L^2(\mathbb{R}^+;H^{s})\times (L^\infty(\mathbb{R}^+;H^{s})^2.$$

Let $(u,\na u, j,B,E)$ be the limit of $(u^\varepsilon,\na u^{\varepsilon}, j^\varepsilon,B^\varepsilon,E^\varepsilon)$ in $\mathbb{X}$.  By virtue of Lemma \ref{jb}, we have, as $\varepsilon \rightarrow 0$,
\begin{align*}
  &u^\varepsilon\cdot\nabla u^\varepsilon\rightarrow u\cdot\nabla u \,\, {\text{in}}\,\,L^2{(\mathbb{R}^+;H^{s'-1})},\,\,
    j^\varepsilon\times B^\varepsilon\rightarrow j\times B \,\, {\text{in}}\,\,L^2{(\mathbb{R}^+;H^{s'-1})},\\
  &u^\varepsilon\times B^\varepsilon\rightarrow u \times B \,\, {\text{in}}\,\,L^2{(\mathbb{R}^+;H^{s})},\,\,
   \varepsilon^2 j^\varepsilon\cdot\nabla j^\varepsilon\rightarrow 0 \,\, {\text{in}}\,\,L^2{(\mathbb{R}^+;H^{s-1})},\\
     &\varepsilon^2 u^\varepsilon\cdot \nabla j^\varepsilon\rightarrow 0 \,\, {\text{in}}\,\,L^2{(\mathbb{R}^+;H^{s-1})},\,\,
      \varepsilon^2 j^\varepsilon\cdot \nabla u^\varepsilon\rightarrow 0 \,\, {\text{in}}\,\,L^2{(\mathbb{R}^+;H^{s-1})}.
\end{align*}
Taking the Leray projection operator $\mathbb{P}$, we deduce from  \eqref{euqNSM}-(1) and \eqref{euqNSM}-(2)  that
\begin{align*}
&\partial_tu^\varepsilon+\mathbb{P}(u^\varepsilon\cdot \nabla u^\varepsilon)+\mathbb{P}(\varepsilon^2j^\varepsilon\cdot \nabla j^\varepsilon)-\mu\Delta u^\varepsilon=\mathbb{P}(j^\varepsilon\times B^\varepsilon),\\		&\varepsilon^2\partial_tj^\varepsilon+\mathbb{P}(\varepsilon^2u^\varepsilon\cdot \nabla j^\varepsilon+\varepsilon^2j^\varepsilon\cdot \nabla u^\varepsilon)-\varepsilon^2\mu\Delta j^\varepsilon+\frac{1}{\sigma}j^\varepsilon=cE^\varepsilon+
\mathbb{P}(u^\varepsilon\times B^\varepsilon), \end{align*}
with  $u^\varepsilon|_{t=0}=u^{\varepsilon,in}$, $j^\varepsilon|_{t=0}=j^{\varepsilon,in}$, $ \div u^\varepsilon=0$ and $ \div j^\varepsilon=0$.
Hence, passing to the limit, we conclude  that
\begin{align*}
\partial_tu+\mathbb{P}(u\cdot \nabla u)-\mu\Delta u= \mathbb{P}(j\times B), \,\, j=\sigma(cE+\mathbb{P}(u\times B)),
\end{align*}
with  $u|_{t=0}=u^{in}$, $ \div u=0$ and $\div j=0$.  The divergence free property then implies that there exists two functions $p(t,x)$ and $\tilde{p}(t,x)$, such that
\begin{align*}
	\partial_tu+u\cdot \nabla u-\mu\Delta u=-\nabla p+j\times B,\,\,
{\text{and}}\,\,
			j=\sigma(-\nabla \tilde{p}+cE+u\times B).
\end{align*}
Similarly, passing to the limit in \eqref{euqNSM}-(3) and \eqref{euqNSM}-(4), we get
\begin{align*}		\tfrac{1}{c}\partial_tE-\nabla \times B=-j, \,\,
			\tfrac{1}{c}\partial_tB+\nabla \times E=0,
\end{align*}
with $E|_{t=0}=E^{in}, B|_{t=0}=B^{in}$, $ \div E=0$ and $\div B=0$. Hence, the limit functions $(u,j,E,B)$ solve \eqref{NSMO}. This completes the whole proof of the theorem.
\section*{Acknowledgments} 
\label{sec:acknowledgments}

Guo is supported by ARC FT230100588.  Zhang is supported by the National Natural Science Foundation of China, 11801425. 


\begin{thebibliography}{99}

\bibitem{AIM2015} D. Ars{\'e}nio, S. Ibrahim and N. Masmoudi. A derivation of the magnetohydrodynamic system from Navier-Stokes-Maxwell system. \textit{Arch. Ration. Mech. Anal.} {\bf 216} (2015), no. 3, 767--812.

\bibitem{AS2019}
D. Ars{\'e}nio and L. Saint-Raymond.
\textit{From the Vlasov-Maxwell-Boltzmann system to incompressible viscous electro-magneto-hydrodynamics,} vol. 1. EMS Monographs in Mathematics. European Mathematical Society (EMS), Zürich, 2019. 


\bibitem{BCD2011} H. Bahouri, J.-Y. Chemin and R. Danchin. \textit{Fourier Analysis and Nonlinear Partial Differential Equations}. Grundlehren der mathematischen Wissenschaften,  343.
Springer, Heidelberg, 2011. 


\bibitem{B1933} D. Biskamp. \textit{Nonlinear Magnetohydrodynamics}.
Cambridge Monographs on Plasma Physics, 1.
Cambridge University Press, Cambridge, 1993. 

\bibitem{D2001} P. A. Davidson. \textit
{An introduction to magnetohydrodynamics.}
Cambridge Texts in Applied Mathematics.
Cambridge University Press, Cambridge, 2001. 

\bibitem{GIM2014} P. Germain, S. Ibrahim and N. Masmoudi. Well-posedness of the Navier-Stokes-Maxwell equations. \textit{Proc.  Roy. Soc. Edinburgh Sect. A} {\bf 144} (2014), no. 1, 71--86.

\bibitem{GISY2014} Y. Giga, S. Ibrahim, S. Shen and T. Yoneda. Global well posedness for a two-fluid model. \textit{Differential Integral Equations} {\bf 31} (2018), no. 3-4, 187--214.

\bibitem{GY1984} Y. Giga and Z. Yoshida. On the equations of the two-component theory in magnetohydrodynamics. \textit{Comm. Partial Differential Equations} {\bf 9} (1984), no. 6,  503--522.
\bibitem{GYZ2024} Z. Guo, M. Yang and Z. Zhang. On the well-posedness of the compressible Navier-Stokes equations. arXiv:2409.01031, 2024.




\bibitem{JL2018} N. Jiang and Y.-L. Luo. Global classical solutions to the two-fluid incompressible Navier-Stokes-Maxwell system with Ohm's law.  \textit{Commun. Math. Sci.} {\bf 16} (2018), no. 2, 561--578.
    
    \bibitem{KT2003} H. Koch and N. Tzvetkov. On the local well-posedness of the Benjamin-Ono equation in $H^s(\mathbb{R})$.
\textit{Int. Math. Res. Not.} {\bf 14} (2003), no. 26, 1449--1464.

\bibitem{L1934} J. Leray. Sur le mouvement d'un liquide visqueux emplissant l'espace. \textit{Acta Math.} {\bf 63} (1934), no. 1, 193--248.


\bibitem{M2010} N. Masmoudi. Global well posedness for the Maxwell-Navier-Stokes system in 2D. \textit{J. Math. Pures Appl.} {\bf 93} (2010), no. 6, 559--571.

\bibitem{MN2008}
N. Masmoudi and K. Nakanishi.
Energy convergence for singular limits of Zakharov type systems. \textit{
Invent. Math.} {\bf 172} (2008), no. 3, 535--583.

\bibitem{MN2002}
N. Masmoudi and K. Nakanishi.
From nonlinear Klein-Gordon equation to a system of coupled nonlinear Schr\"{o}dinger equations.
\textit{
Math. Ann.} {\bf 324} (2002), no. 2, 359--389.



\bibitem{s1987}
J. Simon.
Compact sets in the space $L^p(0,T;B)$.
\textit{Ann. Mat. Pura Appl.} {\bf 146} (1987), 65--96.

\bibitem{T2004} T. Tao. Global well-posedness of the Benjamin-Ono equation in $H^1(\mathbb{R})$. \textit{J. Hyperbolic Differ. Equ.}
{\bf 1} (2004), no. 1, 27--49.





\bibitem{Z2021} Z. Zhang. A convergence to the Navier-Stokes-Maxwell system with solenoidal Ohm's law from a two-fluid model. \textit{Proc. Roy. Soc. Edinburgh Sect. A} {\bf 151} (2021), no. 3, 1094–1115.

\end{thebibliography}
\end{document}